\documentclass[a4paper]{amsart}

\usepackage{amsmath}
\usepackage{amsthm}
\usepackage{amssymb}
\usepackage{amsfonts}
\usepackage{mathtools}
\usepackage{xfrac}
\usepackage{float}
\usepackage{fullpage}

\usepackage{newpxtext}
\usepackage{newpxmath}
\usepackage{hyperref}

\theoremstyle{plain}
\newtheorem{theorem}{Theorem}[section]
\newtheorem{lemma}[theorem]{Lemma}
\newtheorem{prop}[theorem]{Proposition}
\newtheorem{cor}[theorem]{Corollary}
\newtheorem*{theorem*}{Theorem}
\newtheorem*{observation*}{Observation}
\theoremstyle{definition}

\newtheorem{rem}[theorem]{Remark}
\newtheorem{observation}[theorem]{Observation}
%Diagramy przemienne
\usepackage[dvipsnames]{xcolor}
\usepackage{tikz}
\usepackage{tikz-cd}
\usetikzlibrary{shapes}
\usetikzlibrary{decorations.pathreplacing}
\colorlet{lgray}{gray!25}
\colorlet{llgray}{gray!50}
\DeclareMathOperator{\Hom}{Hom}
\DeclareMathOperator{\res}{res}
\DeclareMathOperator{\tr}{tr}
\DeclareMathOperator{\coker}{coker}
\DeclareMathOperator{\im}{im}

\DeclareMathOperator{\colim}{colim}

\newcommand{\M}{\ensuremath{\underline{M}}}
\newcommand{\trm}{\ensuremath{\tr_{\M}}}
\newcommand{\resm}{\ensuremath{\res_{\M}}}
\newcommand{\A}{\ensuremath{\underline{\mathbb{A}}}}
\newcommand{\zt}{\ensuremath{\underline{\tilde{\mathbb{Z}}}}}
\newcommand{\ftwo}{\ensuremath{\underline{\mathbb{F}}_2}}
\newcommand{\aq}{\ensuremath{\mathbb{A}(Q)}}
\newcommand{\hm}{\ensuremath{H\underline{M}}}
\newcommand{\ha}{\ensuremath{H\underline{\mathbb{A}}}}
\newcommand{\hz}{\ensuremath{H\underline{\mathbb{Z}}}}
\newcommand{\hzt}{\ensuremath{H\underline{\tilde{\mathbb{Z}}}}}
\newcommand{\hftwo}{\ensuremath{H\underline{\mathbb{F}}_2}}
\newcommand{\hb}{\ensuremath{H\underline{B}}}
\newcommand{\ho}[2]{\ensuremath{\left(#1_{hQ}\right)_{#2}}}
\newcommand{\mackey}[4]{\ensuremath{\begin{tikzcd}
#1\dar[bend right,"#3"'] \\
#2\uar[bend right,"#4"']
\end{tikzcd}}}

\setcounter{tocdepth}{1}

\title{On the $RO(Q)$-graded coefficients of Eilenberg-MacLane spectra}
\author{Igor Sikora}
\begin{document}
\begin{abstract}
Let $Q$ denote the cyclic group of order two. Using the Tate diagram we compute the $RO(Q)$-graded coefficients of Eilenberg-MacLane $Q$-spectra and describe their structure as modules over the coefficients of the Eilenberg-MacLane spectrum of the Burnside Mackey functor. If the underlying Mackey functor is a Green functor, we also infer the multiplicative structure on the $RO(Q)$-graded coefficients.
\end{abstract}
\maketitle
\tableofcontents
\section{Introduction}
Let $G$ be a finite group. In $G$-equivariant topology the role of ordinary cohomology is played by \emph{Bredon cohomology} \cite{MR0214062}. Whilst easy to define, making computations in Bredon theories is more complicated than in their non-equivariant analogues. Firstly, the coefficients of Bredon theories are of the form of a functor from the orbit category of $G$ to the category of abelian groups. Secondly, Bredon theories are more naturally \emph{$RO(G)$-graded} (graded over the representation ring of $G$) rather than graded over the integers. 

As shown in \cite{MR598689}, a $\mathbb{Z}$-graded Bredon theory extends to an $RO(G)$-graded one if its coefficients are of the form of a \emph{Mackey functor}. As in non-equivariant topology, the Bredon homology/cohomology with coefficients in a Mackey functor $\underline{M}$ is represented by the \emph{Eilenberg-MacLane $G$-spectrum} $\hm$. Spectra of this form appear in various contexts in equivariant topology. For example, equivariant Eilenberg-MacLane spectra are $0$-slices in the slice spectral sequence \cite[Proposition 4.50]{MR3505179}. 

The difficulties of computations in $RO(G)$-graded Bredon theories may be seen in calculations of $\hm^G_\star:=\pi^G_\star (\hm)$, the $RO(G)$-graded $G$-homotopy groups of $\hm$ (a five-pointed star indicates $RO(G)$-grading). This is equivalent to computing the $RO(G)$-graded Bredon homology and cohomology of a point with coefficients in $\underline{M}$ and thus we will refer to $\hm^G_\star$ as the \emph{coefficients of $\hm$}. The groups $\hm^G_n$ are zero for $n\in\mathbb{Z}$ unless $n=0$, which resembles the non-equivariant case. However, if $V$ is not a trivial representation then $\hm^G_V$ might be non-zero.

In this paper we use the Tate diagram to compute the $RO(Q)$-graded coefficients of Eilenberg-MacLane $Q$-spectra, where $Q$ is the cyclic group of order $2$. We do this in three instances: as an $RO(Q)$-graded abelian group, as a module over the coefficients of the Eilenberg-MacLane spectrum associated to the Burnside Mackey functor, and finally as an $RO(Q)$-graded ring, when appropriate. 

\subsection*{Tate diagram}
The idea behind the Tate diagram is to decompose a $Q$-spectrum $X$ into computationally simpler pieces:
\begin{enumerate}
\item Borel completion $X^h$;
\item free $Q$-spectrum $X_h$;
\item singular spectrum $X^\Phi$;
\item Tate spectrum $X^t$,
\end{enumerate} 
which are connected by the following commutative diagram:
\[
\begin{tikzcd}
X_h \rar\dar["\simeq"] & X \rar\dar & X^\Phi\dar \\
X_h \rar & X^h \rar & X^t.
\end{tikzcd}
\]

What makes computations by the Tate diagram feasible is that the rows are cofibre sequences and the right-hand square (known as the \emph{Tate square}) is a homotopy pullback. Moreover, the coefficients of the spectra appearing in the bottom row may be computed by the \emph{homotopy orbit} and \emph{homotopy fixed point} and \emph{Tate spectral sequences}, respectively. The foundational work on the Tate diagram is \cite{MR1230773}, where all of the details are discussed.

The computational strength of the Tate diagram has been proven in various contexts: for example Greenlees used it in \cite{FourApproaches} to compute the coefficients of the Eilenberg-MacLane $Q$-spectrum $\hz$ as an $RO(Q)$-graded ring, Greenlees and Meier computed the coefficients of K-theory with reality $K\mathbb{R}$ in \cite[Section 11]{MR3709655} and Hu and Kriz used it to compute the coefficients of $\hftwo$ and the $Q$-equivariant Steenrod algebra in \cite[Section 6]{MR1808224}. It was also used to compute the coefficients of $\hz$ over groups $C_{p^2}$ with $p$ prime by Zeng in \cite{zeng2018equivariant}.

\subsection*{$RO(Q)$-graded abelian group structure} The first step is describing the $RO(Q)$-graded abelian group structure of $\hm^Q_\star$. We show that it is fully determined by the underlying Mackey functor $\M$. This structure is given in Theorem \ref{Theorem RO(Q)graded abelian structure of hmq}, which can be informally stated as follows:
\begin{theorem*}
The $RO(Q)$-graded abelian group structure of $\hm^Q_\star$ may be presented by Figure \ref{Figure introduction}, where:
\begin{enumerate}
\item Every lattice point represents a $Q$-representation, the horizontal axis describes multiplicity of the trivial $Q$-representation and the vertical axis describes multiplicity of the sign representation.
\item The empty circle at the position $(0,0)$ is the abelian group $\M(Q/Q)$.
\item The values of $\hm^Q_\star$ lying on the $x=0$ axis are subgroups of $\M(Q/Q)$ given by the kernel of the restriction and the cokernel of the transfer.
\item The full dots in positions $(1,-1)$ and $(-1,1)$ are $\mathbb{Z}[Q]$-submodules of $\M(Q/e)$ given by the kernel of the transfer and the cokernel of the restriction respectively, whereas the values lying on the red/blue lines above/below them are their subquotients.
\item The values lying in blue and red areas are respectively the group cohomology and homology of $Q$ with coefficients in $\M(Q/e)$.
\item All other values are zero. 
\end{enumerate}
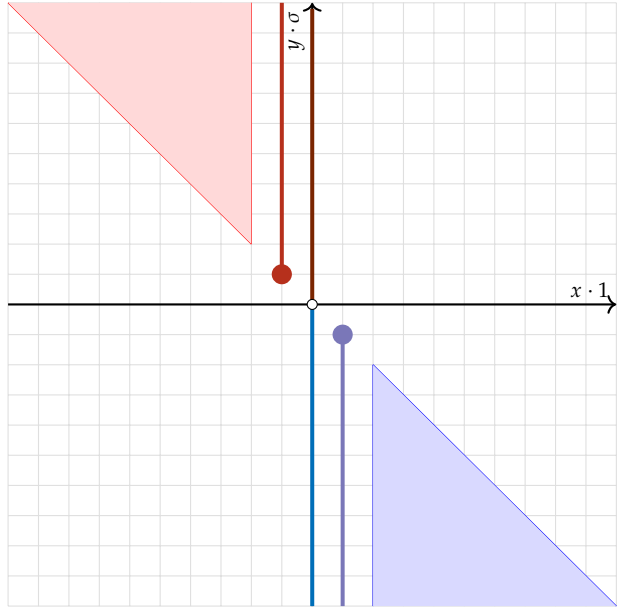
\begin{figure}[ht]
\begin{tikzpicture}[scale=0.4]
%Kratka

\draw[help lines, thin, lgray] (0,0) grid (20,20);
%\draw[llgray, dashed] (0,20)--(20,0);
\draw[->, thick] (0,10)--(20,10) ;
\draw[->, thick] (10,0)--(10,20) ;
%Homologie
\draw[red] (0,20)--(8,12);
\draw[red] (8,12)--(8,20);
\fill[red!15!white] (0,20)--(8,12)--(8,20);

%\node[scale=0.8] at (5.7,17.5) {$\ho{\hm}{x+y\sigma}$};

%Kohomologie
\draw[blue] (12,8)--(20,0);
\draw[blue] (12,8)--(12,0);
\fill[blue!15!white] (20,0)--(12,8)--(12,0);
%\node[scale=0.8] at (15,2) {$\hm^{hQ}_{x+y\sigma}$};

\draw[BrickRed, line width=1.5pt] (9,11)--(9,20);
\node[circle,BrickRed,fill,scale=0.8] at (9,11) {};
%\node[scale=0.8] at (6.5,11) {$\coker(\resm)$};

\draw[Brown, line width=1.5pt] (10,10)--(10,19.8);

\node[circle, Periwinkle,fill,scale=0.8] at (11,9) {};
\draw[Periwinkle, line width=1.5pt] (11,0)--(11,9);
\draw[NavyBlue, line width=1.5pt] (10,0)--(10,10);
%\node[scale=0.8] at (13,9) {$\ker(\trm)$};

\node[circle, draw, fill=white, scale=0.4] at (10,10) {};

\node[above left, scale=0.8] at (20,10) {$x\cdot 1$};
\node[above left, rotate=90, scale=0.8] at (10,20) {$y\cdot \sigma$};

%\draw [decorate,decoration={brace,amplitude=7pt},xshift=-6pt,yshift=0pt, thick]
%(10,0) -- (10,9) node [black,midway,xshift=-1.2cm, scale=0.8] 
%{$\coker(\trm)$};
%\draw [decorate,decoration={brace,amplitude=7pt,mirror},xshift=6pt,yshift=0pt, thick]
%(10,11) -- (10,20) node [black,midway,xshift=1cm,scale=0.8] 
%{$\ker(\resm)$};
%\draw (8,20.5)--(9,18);
%\draw (12,-0.5)--(11,2);
%\node[scale=0.8] at (12,-1) {$\ker(\trm)_Q$};
%\node[scale=0.8] at (8,21) {$\M(Q/e)^Q/\im(\res)$};
\end{tikzpicture}
\caption{$RO(Q)$-graded coefficients of $\hm$.}
\label{Figure introduction}
\end{figure}
\end{theorem*}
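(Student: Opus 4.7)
The plan is to apply the Tate diagram to $X=\hm$, compute the $RO(Q)$-graded coefficients of each of the four constituent spectra separately, and then reassemble the global answer by chasing the long exact sequences coming from the rows of the diagram.

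First, I would handle the three \emph{Borel} pieces $(\hm)_h$, $(\hm)^h$ and $(\hm)^t$ on the bottom row. Since the underlying spectrum of $\hm$ is concentrated in homotopical degree zero with value $\M(Q/e)$ (carrying the natural $Q$-action), the homotopy orbits spectral sequence for $(\hm)_h$ and the homotopy fixed points spectral sequence for $(\hm)^h$ both collapse at $E^2$, delivering the group homology $H_\ast\bigl(Q;\M(Q/e)\bigr)$ and the group cohomology $H^\ast\bigl(Q;\M(Q/e)\bigr)$ respectively. Because the Borel spectra depend only on $|V|=a+b$ when $V=a\cdot 1+b\cdot\sigma$, these contributions are shifted uniformly along diagonal lines of constant $|V|$ in the $(a,b)$ lattice; this accounts for the values in the red and blue regions of Figure~\ref{Figure introduction}. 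The Tate spectrum $(\hm)^t$ then carries the Tate cohomology $\widehat H^\ast\bigl(Q;\M(Q/e)\bigr)$, which for $Q$ cyclic of order two is $2$-periodic and serves as the Mayer--Vietoris glue.

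Second, I would compute the coefficients of the geometric fixed-point piece $(\hm)^\Phi$, whose $RO(Q)$-graded homotopy depends only on the fixed-point dimension $|V|^Q=a$ and is therefore constant along each vertical line in the diagram. In general $\Phi^Q\hm$ is \emph{not} itself an Eilenberg--MacLane spectrum, so $\pi_\ast(\hm)^\Phi$ must be computed directly; for a Mackey functor over $Q$ this computation produces, in the relevant degrees, the submodules $\ker(\resm)$ and $\coker(\trm)$ of $\M(Q/Q)$ together with the contribution $\M(Q/Q)$ at the origin, which together populate the brown and navy ``axis'' segments and the empty circle at $(0,0)$ in the figure.

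Finally, I would splice the two computations using the top cofibre sequence $(\hm)_h\to\hm\to(\hm)^\Phi$ and its induced long exact sequence in $\pi^Q_V$. At positions $V$ lying strictly in the interior of the red or blue cone the geometric contribution vanishes and the connecting map is either zero or an isomorphism, recovering pure group (co)homology. Along the axes and at the distinguished dots $(\mp 1,\pm 1)$ the connecting map mixes Borel and geometric data, and it is precisely here that $\ker(\trm)$, $\coker(\resm)$ and their subquotients appear. The main obstacle will be controlling this boundary behaviour: one must identify the specific subquotients of $\M(Q/e)$ on the red/blue line segments emanating from the dots, by tracking how the restriction and transfer interact with the connecting map, and verify that the Tate pullback square imposes no additional gluing data beyond what the cofibre sequences already provide. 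A secondary difficulty, which should nonetheless be routine, is checking that the spectral sequences really do collapse in the $RO(Q)$-grading and that the resulting extensions of abelian groups split in the way required to match Figure~\ref{Figure introduction}.
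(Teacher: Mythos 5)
Your overall strategy — the Tate square, homotopy orbits/fixed points spectral sequences for the Borel pieces, and reassembly via long exact sequences — does match the paper's method (Sections \ref{sec The Tate method}--\ref{sec gfp}). However, there is a concrete error in the middle step that would break the argument. You assert that computing $\pi_\ast(\hm^\Phi)$ "produces the submodules $\ker(\resm)$ and $\coker(\trm)$ of $\M(Q/Q)$ together with the contribution $\M(Q/Q)$ at the origin." This is not what the geometric fixed points give. By Theorem \ref{thm coefficients of geometric fixed points}, $\hm^{\Phi Q}_0=\coker(\trm)$ (not $\M(Q/Q)$), $\hm^{\Phi Q}_1=\ker(\trm)_Q$ (not $\ker(\resm)$), $\hm^{\Phi Q}_x=\hat H^x(Q;V)$ for $x\geq 2$, and $\hm^{\Phi Q}_x=0$ for $x<0$. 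In particular $\ker(\resm)$ and $\M(Q/Q)$ appear in none of the four corners of the Tate square, so no amount of splicing will recover them from the diagram alone. The values on the positive $y$-axis (the $\ker(\resm)$ segment), the cokernel of transfer on the negative $y$-axis, $\M(Q/Q)$ at the origin, and the dot $\ker(\trm)$ at $(1,-1)$ are all obtained in the paper from the \emph{cellular} cofibre sequence $Q_+\to S^0\to S^\sigma$ smashed with $\hm$ (Proposition \ref{prop groups hmq for x=0} and the first part of Lemma \ref{lemma values 1+ysigma}), together with the defining property $\pi_0^Q(\hm)=\M(Q/Q)$. Your proposal is missing this independent input entirely.

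There is a related gap: the paper also uses the cofibre sequence $Q_+\to S^0\to S^\sigma$ to deduce the multiplication-by-$a$ Lemma \ref{lemma multiplication by a}, which in turn establishes the vanishing pattern of $\hm^Q_\star$ above/below the antidiagonal (Lemma \ref{lemma general shape}). Without some version of this input you cannot justify that "at positions $V$ lying strictly in the interior of the red or blue cone the geometric contribution vanishes and the connecting map is either zero or an isomorphism" — a priori the Tate square gives you long exact sequences, but you need an independent vanishing/isomorphism statement to see that $\hm^Q_{x+y\sigma}$ agrees with $\hm^{hQ}_{x+y\sigma}$ for $x\geq 2$ and with $(\hm_{hQ})_{x+y\sigma}$ for $x\leq -2$. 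Finally, note that even computing $\hm^{\Phi Q}_\ast$ is not done "directly" in the paper: it is extracted from the Tate diagram using the equivalence of the fibres of $\epsilon$ and $\epsilon_t$ (Lemma \ref{lemma gfp iso to tate}), so the geometric fixed points are an output, not an independent input, of the Tate square analysis.
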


\subsection*{$\ha^Q_\star$-module structure}
The category of $Q$-Mackey functors has a symmetric monoidal structure and commutative monoids with respect to this structure are called \emph{Green functors}. If $\M$ is a Green functor, the $Q$-spectrum $\hm$ is a (naive) commutative ring $Q$-spectrum and its homotopy groups form an $RO(Q)$-graded commutative ring. The most fundamental example of a Green functor is the \emph{Burnside Mackey functor} $\A$.

Every $Q$-Mackey functor $\M$ is a module over $\A$. Since taking the Eilenberg-MacLane spectrum is a lax monoidal functor (see \cite[Chapter 8]{MR1230773}), the spectrum $\hm$ is a module over $\ha$ and $\hm^Q_\star$ is a module over the $RO(Q)$-graded commutative ring $\ha^Q_\star$.

We describe $\ha^Q_\star$ in Section \ref{sec Examples}. Its multiplicative structure is determined by two elements - $a$ and $u$. The first is an Euler class associated to the inclusion $S^0\to S^\sigma$, whereas the second is the generator of $\ha^Q_{2-2\sigma}$ and corresponds to the generator of $\ha^Q_2(S^{2\sigma})$.

The action of $\ha^Q_\star$ on $\hm^Q_\star$ may be informally stated as follows:
\begin{observation*}
The $\ha^Q_\star$-module structure of $\hm^Q_\star$ is determined by the action of 3 elements - $a$, $u$ and $\omega$, where the latter is the class of $Q/e$ in the Burnside ring.
\end{observation*}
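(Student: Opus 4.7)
My plan is to combine the $RO(Q)$-graded abelian group description of $\hm^Q_\star$ from Theorem \ref{Theorem RO(Q)graded abelian structure of hmq} with an explicit multiplicative description of $\ha^Q_\star$ computed in Section \ref{sec Examples}. The key observation is that $\ha^Q_\star$, apart from the unit, is multiplicatively generated by three classes of different types: the Euler class $a\in\ha^Q_{-\sigma}$, the periodicity class $u\in\ha^Q_{2-2\sigma}$, and the Burnside ring class $\omega=[Q/e]\in\ha^Q_0=\A(Q/Q)$. Once this is established, the $\ha^Q_\star$-module structure on any $\hm^Q_\star$ is automatically determined by the actions of these three generators together with the relations they satisfy in $\ha^Q_\star$, and the task reduces to reading off these actions concretely.

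The second stage of the proof is a region-by-region verification, referring to Figure \ref{Figure introduction}. The class $a$ shifts the bidegree by $(0,-1)$ in the $(x,y\sigma)$-plane, walking one step upward: it carries the central value $\M(Q/Q)$ into the cokernel of the restriction, then continues up the vertical axis and feeds into the red upper-left region as the usual Euler class of $\sigma$. The class $u$ shifts the bidegree by $(2,-2)$, supplying the diagonal periodicity that identifies each blue or red strip with a single group (co)homology degree with coefficients in $\M(Q/e)$, and so implements the vertical repetition visible in the figure. Finally, $\omega$ acts in place, and on each Mackey functor value either as the composition $\trm\circ\resm$ on $\M(Q/Q)$-type pieces or, via restriction, as multiplication by $2$ on $\M(Q/e)$-type pieces. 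Iterating the actions of $a$, $u$, and $\omega$ one reaches every lattice point of Figure \ref{Figure introduction} from the central cross and propagates the module structure across the full plane.

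The main obstacle I anticipate is organising this region-by-region analysis consistently across the Tate diagram: the three pieces $X_h$, $X^h$, and $X^t$ contribute different ranges of the picture, and one must verify that the actions of $a$ and $u$ glue compatibly along the cofibre sequences of the Tate square. In particular, care is needed in the overlap where $u$ becomes invertible, since the Tate identification must commute with multiplication by $a$ and $\omega$, and one must also check that the relation $a\cdot\omega=0$ (reflecting the fact that $\omega$ is a transfer while $a$ restricts trivially) is respected on each component. Once these compatibilities are in hand, the description of $\hm^Q_\star$ as an $\ha^Q_\star$-module reduces to the already established generators-and-relations presentation of $\ha^Q_\star$, completing the observation.
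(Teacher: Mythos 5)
Your strategy---use Theorem \ref{thm coeffs of ha} for the ring structure of $\ha^Q_\star$ and then trace the actions of $a$, $u$, and $\omega$ region by region---is the same one the paper takes in Observation \ref{obs ha module structure}, which refers to Lemma \ref{lemma multiplication by a} for $a$, Theorem \ref{thm multiplication by u} for $u$, and the discussion of the $\omega$-action in Section \ref{sec comm}. Your supporting remarks (the relation $a\omega=0$ via Frobenius reciprocity, the $u$-periodicity and its compatibility with the Tate square) are also in line with the paper.

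However, your ``key observation'' that $\ha^Q_\star$, apart from the unit, is multiplicatively generated by $a$, $u$, and $\omega$ is false, and this is the load-bearing step of your argument. By Theorem \ref{thm coeffs of ha}, the subring sitting in degrees with $x\geq 0$ and $y\leq 0$ is the truncated polynomial algebra $\mathbb{A}(Q)[a,u]/(a\omega,2au)$, but the full ring also contains the class $\tau\in\ha^Q_\sigma$ with $a\tau=\omega-2$, the classes $\omega u^{-n}\in\ha^Q_{2n\sigma-2n}$ for $n>0$, and the $\mathbb{Z}/2$-classes in the upper-left region, and none of these lie in the subring generated by $1$, $a$, $u$, $\omega$: a monomial $a^iu^j\omega^k$ with $i,j,k\geq 0$ has fixed degree $2j\geq 0$ and twisted degree $-i-2j\leq 0$, so such monomials can never land in positive twisted degree or negative fixed degree. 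The claim the paper actually makes---and which you need---is weaker: the actions of these remaining classes on $\hm^Q_\star$ are \emph{forced} by the relations they satisfy with $a$, $u$, $\omega$ together with the degree constraints of Theorem \ref{Theorem RO(Q)graded abelian structure of hmq}. For example, $\tau\cdot m$ is determined by the equation $a(\tau m)=(\omega-2)m$ in the degrees where $a$ is injective, and the action of $\omega u^{-n}$ is determined by the action of $\omega$ and the invertibility of $u$ on the relevant part of $\hm^Q_\star$. If you replace the multiplicative-generation claim with this formulation, your region-by-region verification is fine and matches the paper's argument.
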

The action by $a$ on $\hm^Q_\star$ may be easily derived from the cofibre sequence
\[
Q_+\to S^0\to S^\sigma.
\]
However, a description of the action of $u$ requires more work. The detailed analysis of this action is possible due to the Tate diagram and the connection between $\hm$ and its Borel completion. The details are given in Section \ref{sec multiplication by u}. The action of the element $\omega$ is discussed in Section \ref{sec comm}.
\subsection*{$RO(Q)$-graded ring structure}
Finally, we describe the multiplicative structure of $\hm$ when $\M$ is a Green functor.

In Sections \ref{sec multiplication by u} and \ref{sec comm} we show that most of this structure may be derived from the $\ha^Q_\star$-module structure. The first issue that we encounter is the graded commutativity. The sign rule for commutativity in $RO(Q)$-graded rings involves units in the Burnside ring - thus not only $-1$, but also $1-\omega$, where $\omega$ is the class of $Q/e$. For precise statement, see Observation \ref{obs RO(Q)-graded ring}.

In Section \ref{sec comm} we show the following Theorem:
\begin{theorem*}[{\ref{thm commutativity}}]
If $\M$ is a Green functor then $\hm^Q_\star$ is a strictly commutative ring, i.e., all signs coming from the graded commutativity rule are trivial.
\end{theorem*}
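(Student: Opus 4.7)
The plan is to compute the graded-commutativity sign $\epsilon_{V,W}\in A(Q)^{\times}$ explicitly, and then show that $1-\epsilon_{V,W}$ annihilates every product $xy\in\hm^Q_{V+W}$.

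For $V=a+b\sigma$ and $W=c+d\sigma$, the twist $S^V\wedge S^W\to S^W\wedge S^V$ has non-equivariant degree $(-1)^{(a+b)(c+d)}$ and geometric-fixed-point degree $(-1)^{ac}$. Using the injection $(\res,\Phi^Q)\colon A(Q)\hookrightarrow\mathbb{Z}\times\mathbb{Z}$, which sends $\omega\mapsto(2,0)$, a short calculation yields
\[
\epsilon_{V,W}=(-1)^{ac}(1-\omega)^{ad+bc+bd\bmod 2},
\]
so $\epsilon_{V,W}\in\{\pm 1,\pm(1-\omega)\}$. The graded-commutativity relation $xy=\epsilon_{V,W}\,yx$ therefore reduces strict commutativity to the three vanishings $2\cdot xy=0$, $\omega\cdot xy=0$, and $(2-\omega)\cdot xy=0$ in the respective non-trivial cases $\epsilon_{V,W}\in\{-1,\,1-\omega,\,-(1-\omega)\}$.

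Next I would verify these case by case using the bidegree-wise description of $\hm^Q_\star$ from Theorem~\ref{Theorem RO(Q)graded abelian structure of hmq} together with the $\omega$-action analysed in Section~\ref{sec comm}. Two structural inputs do most of the work. First, on each body region $\hm^Q_V$ is group (co)homology of $Q$ with coefficients in $\M(Q/e)$, hence $2$-torsion, and $\omega=\tr\res$ acts as zero there. Second, on the $\sigma$-axis stems the group is $\coker\tr$ or $\ker\res$, killed by $\omega$ by construction. The $a=\pm1$ stems are subquotients of $\M(Q/e)$ on which $\omega$ acts as $\cdot 2$ rather than as zero, but a parity check shows that whenever two such stem elements are multiplied the product lands in a body region or on the $\sigma$-axis, where the required vanishing holds. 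Since $\omega$ and $2$ are central in $\ha^Q_\star$, the relevant torsion property transfers between a factor and the product.

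The main obstacle I expect is the subcase in which $xy$ lies in $\hm^Q_0=\M(Q/Q)$, which is itself neither $2$- nor $\omega$-torsion. Such a product necessarily factors through $\im\tr\subseteq\M(Q/Q)$, and on $\im\tr$ the Mackey identity $\res\tr=1+c^*$ yields $\omega\,\tr=2\,\tr$; combined with the $2$-torsion of at least one factor in the relevant bidegrees, which is forced by the parity constraints whenever $\epsilon_{V,W}\ne1$, this gives $(1-\epsilon_{V,W})\cdot xy=0$ and completes the verification.
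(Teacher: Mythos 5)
Your plan mirrors the paper's proof: identify the sign $\epsilon_{V,W}$, reduce strict commutativity to showing $(1-\epsilon_{V,W})\cdot xy=0$, then verify this bidegree by bidegree using $2$-torsion and the $\omega$-action (as in Lemmas~\ref{lemma 2-torsion}, \ref{lemma antidiagonal commutative}, \ref{lemma hmastsigma commutative} and the closing case analysis). The observation that a product landing at the origin lies in $\im\trm$, because the geometric fixed point image of one of the factors vanishes, is a nice way to handle that case. Two of your stated claims are, however, wrong.

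The sign formula is not $(-1)^{ac}(1-\omega)^{ad+bc+bd}$. The commutativity constraint is the $\aq$-degree of the composite of the smash twist with the canonical reorderings into the standard isotypic model $\mathbb{R}^{a+c}\oplus\sigma^{b+d}$; that composite is a block transposition among the $\mathbb{R}$-coordinates together with a block transposition among the $\sigma$-coordinates, which gives $\epsilon_{V,W}=(-1)^{ac}(1-\omega)^{bd}$, the formula used in the paper. Your underlying-degree claim $(-1)^{(a+b)(c+d)}$ is computed with a sequential coordinate ordering rather than the isotypic one, and is therefore not what the equivariant reordering maps restrict to; for $V=\sigma$, $W=\mathbb{R}$ the reorderings cancel the twist exactly and the constraint is $1$, not $1-\omega$. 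The spurious factor $(1-\omega)^{ad+bc}$ happens to act trivially on every group that arises in $\hm^Q_\star$ (a parity check rules out products on the antidiagonal when $ad+bc$ is odd, and elsewhere $1-\omega$ acts as $\pm 1$ on a $2$-torsion group or as $1$), so your conclusion survives, but the sign computation itself is incorrect. The second problem is the assertion that the body regions are $2$-torsion: on the antidiagonal, $\hm^Q_{k-k\sigma}$ for $|k|\ge 2$ is $V^Q$, $V_Q$, $\prescript{}{N}V$, or $V/(1-\gamma)V$, and these need not be $2$-torsion. What makes the sign trivial there is that $\omega$ acts as multiplication by $2$ on any subquotient of $V$, so $2-\omega$ acts as $0$ (this is exactly Lemma~\ref{lemma antidiagonal commutative}). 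Relatedly, the identity ``$\omega=\trm\resm$'' only makes sense at level $Q/Q$; on the $Q/e$-level subquotients that populate the body, $\omega$ acts via $\resm(\omega)=\resm\trm(1)=1+\gamma=2$, and it is the $2$-torsion off the antidiagonal, not $\omega$-vanishing, that kills the sign.
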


Finally, at the end of Section \ref{sec comm} in Observation \ref{obs multiplicative structure}, we give a recipe for describing the multiplicative structure of $\hm^Q_\star$ for any Green functor $\M$. 

\begin{observation*}
If $\M$ is a $Q$-Green functor then the multiplicative structure of $\hm^Q_\star$ is fully determined by its $\ha^Q_\star$-module structure and relations between elements of degrees $1-\sigma$, $\sigma-1$, $3-3\sigma$ and $3\sigma-3$. These relations may be derived from the induced map of $\hm\to\hm^h$.
\end{observation*}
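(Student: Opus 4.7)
The strategy is to use the bigraded picture from Theorem~\ref{Theorem RO(Q)graded abelian structure of hmq} together with the $\ha^Q_\star$-action (computed in Sections~\ref{sec multiplication by u} and~\ref{sec comm}) to reduce the multiplication on $\hm^Q_\star$ to a finite collection of atomic products located along the two antidiagonal rays $\mathbb{Z}_{\geq 0}\cdot(1-\sigma)$ and $\mathbb{Z}_{\geq 0}\cdot(\sigma-1)$, and then to read these atomic products off from the Borel completion.

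First, I would fix, for each non-vanishing bidegree of Figure~\ref{Figure introduction}, a canonical $\ha^Q_\star$-generator. On the $\sigma$-axis every such generator is obtained from $1 \in \hm^Q_0$ by iterated application of the Euler class $a \in \ha^Q_{-\sigma}$. In the blue and red wedge regions, $a$ is an isomorphism (or surjection) onto the appropriate faces and $u \in \ha^Q_{2-2\sigma}$ shifts by two steps along the antidiagonal direction; combining these, one expresses every element in the wedges as an $\ha^Q_\star$-multiple of one of the ``full-dot'' generators at $\pm(1-\sigma)$. Bilinearity of multiplication over the $\ha^Q_\star$-action then reduces an arbitrary product in $\hm^Q_\star$ to products among these canonical generators.

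Second, I would carry out the reduction ray-by-ray. Fix $x_\pm \in \hm^Q_{\pm(1-\sigma)}$. Because $u$ shifts along a ray by two steps, the recursion $x_\pm^{n+2} = u^{\pm 1}\cdot x_\pm^n$ combined with the known module action propagates all powers $x_\pm^n$ with $n\geq 4$ back to $x_\pm,\ x_\pm^2,\ x_\pm^3$. Within this, $x_\pm^2 \in \hm^Q_{\pm 2(1-\sigma)}$ lies in a cyclic $\ha^Q_\star$-summand generated by $u^{\pm 1}$, so it is encoded by a scalar relation between elements of degree $\pm(1-\sigma)$, and likewise $x_\pm^3 \in \hm^Q_{\pm 3(1-\sigma)}$ gives rise to a scalar relation involving degree $\pm 3(1-\sigma)$. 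Cross-products across the two rays land on the $\sigma$-axis or in the wedges, where the module structure already determines them. This leaves precisely the four degrees $\pm(1-\sigma),\ \pm 3(1-\sigma)$ of the statement as the loci of independent data.

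Finally, to evaluate the outstanding relations I would compare $\hm^Q_\star$ with its Borel completion via the unit $\hm\to\hm^h$. The multiplicative structure of $(\hm^h)^Q_\star$ is completely computable from the homotopy fixed points spectral sequence (whose multiplicative behaviour is classical), and the Tate square analysis of Section~\ref{sec multiplication by u} shows that the comparison map is injective on the relevant summands in bidegrees $\pm(1-\sigma)$ and $\pm 3(1-\sigma)$; hence the atomic products descend uniquely from $(\hm^h)^Q_\star$ to $\hm^Q_\star$. The main obstacle will be Step~2: one must verify carefully that the recursion $x_\pm^{n+2} = u^{\pm 1}\cdot x_\pm^n$ really does propagate through all $n\geq 4$ without introducing new indeterminacy, which amounts to a bidegree-by-bidegree check that $u^{\pm 1}$-multiplication is the correct isomorphism between the cyclic $\ha^Q_\star$-summands supporting consecutive even/odd powers of $x_\pm$.
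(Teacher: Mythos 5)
Your reduction strategy---use $a$-multiplication to pass to the antidiagonal, use $u$-periodicity to collapse to the four degrees $\pm(1-\sigma)$, $\pm 3(1-\sigma)$, then consult the Tate-square comparison maps---matches the paper's. However, your final step has a genuine gap: you claim that the Borel comparison $\epsilon_\star\colon\hm^Q_\star\to\hm^{hQ}_\star$ is injective on the relevant summands, but this is false in general. Already on the degrees you list, $\epsilon_{\sigma-1}\colon\coker(\resm)\to(\tilde{V})^Q$ is the map $x\mapsto Nx$ and can have a kernel, and $\epsilon_{3\sigma-3}$ factors through the norm. More importantly, the products you need to determine land in \emph{other} bidegrees: for instance the product of a class in degree $1-\sigma$ with one in degree $\sigma-1$ lands in $\hm^Q_0\cong\M(Q/Q)$, where $\epsilon_0$ is the (factored) restriction map (Proposition \ref{prop epsilon_0 f_0}) with kernel $\ker(\resm)$, which is typically nonzero. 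The paper's Observation \ref{obs multiplicative structure} therefore invokes \emph{both} ring maps $\epsilon_\star\colon\hm^Q_\star\to\hm^{hQ}_\star$ and $g_\star\colon\hm^Q_\star\to\hm^{\Phi Q}_\star$, reflecting the pullback nature of the Tate square: the geometric fixed point map $g$ captures the part of the product that the Borel map forgets (for example $g_0$ projects onto $\coker(\trm)$, complementary to what $\epsilon_0$ sees).

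A secondary issue is your appeal to ``cyclic $\ha^Q_\star$-summands generated by $u^{\pm1}$'' and to the recursion $x_\pm^{n+2}=u^{\pm1}\cdot x_\pm^n$. For a general Green functor $\hm^Q_{2-2\sigma}\cong V^Q$ need not be cyclic, so $x_+^2$ is not simply a scalar multiple of $u$; and the map $u\colon\hm^Q_{1-\sigma}\to\hm^Q_{3-3\sigma}$ is the inclusion $\ker(\trm)\hookrightarrow\prescript{}{N}V$ (Theorem \ref{thm multiplication by u}, point 4), which is typically not surjective, so the degree $3-3\sigma$ carries genuinely new generators that cannot be reached from $x_+$ by your recursion. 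This is precisely why the paper lists $\pm 3(1-\sigma)$ as independent atomic degrees rather than deriving them from $\pm(1-\sigma)$.
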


The procedure of obtaining multiplicative structure is illustrated with a wide array of examples in Sections \ref{sec Examples} and \ref{sec Further examples}.

\subsection*{Contribution of the paper and related work}
Computations of coefficients of Eilenberg-MacLane spectra over $Q$ already have a long history. Based on unpublished work of Stong, Lewis computed $\ha^Q_\star$, where $\underline{\mathbb{A}}$ is the Burnside Mackey functor for $Q$ in \cite[Section 2]{MR979507}. Calculations for the constant Mackey functor $\underline{\mathbb{F}}_2$ by Caruso can be found in \cite{MR1684248} and by Hu-Kriz in \cite[Proposition 6.2]{MR1808224}. The computations for the constant Mackey functor $\underline{\mathbb{Z}}$ may be found in Dugger's work \cite[Appendix B]{MR2240234}.

The full $RO(Q)$-graded Mackey functor valued coefficients of $\hm$ for any Mackey functor $\M$ was given by Ferland in \cite{MR2699528} and may also be found in Ferland-Lewis \cite[Chapter 8]{MR2025457}. Their computations are based on the cofibre sequence
\[
Q_+\to S^0\to S^\sigma.
\] This sequence allows one to describe attaching maps of free cells, and thus to compute the homology of any representation sphere. Therefore this method may be described as the "cell method". While being intuitive and allowing the computation of the $RO(Q)$-graded abelian group structure, this approach does not provide efficient methods of describing the multiplicative structure.  We note that Ferland’s computations work for all cyclic groups of prime order, but in this paper we will restrict our attention to the group of order $2$. From Ferland's work one can derive the $RO(Q)$-graded abelian group structure and $\ha^Q_\star$-module structure of $\hm^Q_\star$.

The aim of this paper is to show the computational strength of the Tate diagram on the example of computations of coefficients of equivariant Eilenberg-MacLane spectra. As demonstrated by Greenlees in his computations of $\hz^Q_\star$ in \cite{FourApproaches}, the method based on the Tate diagram gives not only a good insight into the abelian group structure, but also into the multiplicative structure. In this paper we generalise Greenlees's results to all $Q$-Mackey and Green functors. Therefore the main contribution of this paper is the description of the multiplicative structure of $\hm^Q_\star$ and showing that the Tate diagram gives an algorithmic way to describe this structure.

The other strong computational technique in equivariant homotopy theory is the \emph{slice spectral sequence} of Hill-Hopkins-Ravenel \cite{MR3505179}. The idea behind it is a decomposition of a $G$-spectrum into a tower similar to the Postnikov tower, but retaining more equivariant information. However, one of the challenges that one may face while using the slice spectral sequence is to determine the filtration quotients. Therefore it needs additional techniques to support the calculations.  An example of this is the fact that for any $Q$-spectrum $X$ the starting input for the slice spectral sequence is given by the spectrum $\Sigma^{-1}H\underline{\pi_{-1}}(X)$, the desuspension of the Eilenberg-MacLane spectrum of $\underline{\pi_{-1}}(X)$. In this paper we demonstrate that the computations based on the Tate diagram for simple equivariant objects, such as $\hm$, may be carried out in a neat and algorithmic way and thus provide a useful method of doing auxiliary calculations for the slice spectral sequence.

Additionally, it is worth noting that equivariant homotopy theory has a rather small repository of computational examples and is in a constant need for developing methods of doing calculations. Therefore an auxiliary contribution of this paper is 
adding another piece to this development.

We note here that the "full information" is given by the \emph{Mackey functor valued} $RO(Q)$-graded coefficients $\hm^\bullet_\star$, as given for example in \cite{MR979507}. However, the main computational effort is computing the $Q/Q$ level of the $RO(Q)$-graded Mackey functor $\hm^\bullet_\star$ and this is where the Tate method gives a hand. Therefore we focus our attention in this paper on computing $\hm^Q_\star$. The rest of the Mackey functor structure of $\hm^\bullet_\star$ may be easily deduced.

\subsection{Notation and conventions}
\label{ssection Notation and conventions}
Throughout the whole paper $Q$ denotes the group of order $2$ and $\gamma$ its non-trivial element. We denote the norm element of the group ring $\mathbb{Z}[Q]$ by $N$, i.e., $N=1+\gamma$. The Burnside ring of $Q$ is denoted by $\mathbb{A}(Q)$ and we write $\omega$ for the the class of $Q/e$ in this ring.

If $M$ is a $\mathbb{Z}[Q]$-module, we denote its $Q$-fixed points by $M^Q$. We also put
\[M_Q=\frac{M}{(1-\gamma)M}.\]

We denote $N$-torsion elements of $M$ by $\prescript{}{N}M$, i.e., $\prescript{}{N}M=\{x\in M\:|\:Nx=0\}$. Two important $\mathbb{Z}[Q]$-modules are:
\begin{itemize}
\item $\mathbb{Z}$ - the integers with trivial $Q$-action;
\item $\tilde{\mathbb{Z}}$ - the integers with sign action.
\end{itemize}

We denote the $n$-th Tate cohomology group of $Q$ with coefficients in $M$ by $\hat{H}^n(Q;M)$. These can be computed to be: 
\[
\hat{H}^n(Q;M)=
\begin{cases}
M^Q/NM &\textrm{if }n\in\mathbb{Z}\textrm{ even} \\
\prescript{}{N}M/(\gamma-1)M&\textrm{if }n\in\mathbb{Z}\textrm{ odd}
\end{cases}
\]
For details and a precise definition see \cite[Definition 6.2.4]{MR1269324} or \cite[Chapter VI]{MR672956}.

Underlined capital letters are used to denote Mackey functors. If we need to show the structure of some particular Mackey functor we use the Lewis diagram:
\[
\mackey{\M(Q/Q)}{\M(Q/e)}{\res_{\M}}{\tr_{\M}}.
\]
The map $\M(Q/Q)\to\M(Q/e)$ is the \emph{restriction map} and will be denoted by $\res_{\M}$. The map $\M(Q/e)\to\M(Q/Q)$ is called the \emph{transfer map} and will be denoted by $\tr_{\M}$. We will drop the subscripts if it is clear from the context which Mackey functor the notation refers to. For brevity we use $V$ to denote the $\mathbb{Z}[Q]$-module $M(Q/e)$.

Throughout this whole paper we work in the category of genuine $Q$-spectra. By "commutative ring $Q$-spectrum" we mean an $E_\infty$-ring in $Q$-spectra. In the literature these are also called naive commutative ring $Q$-spectra.

If $X$ is a $Q$-spectrum we put $X^Q_\star=\pi^Q_\star(X)$. The five-pointed star is used to indicate the grading over $RO(Q)$ - the representation ring of $Q$. This means that our grading is "two-dimensional" - all representations of $Q$ may be written in the form $x\mathbb{R}+y\sigma$, where $x,y$ are integers, $\mathbb{R}$ is a real trivial one-dimensional representation and $\sigma$ is a real sign representation. We will abbreviate $x\mathbb{R}+y\sigma$ to $x+y\sigma$. We sometimes refer to $x$ in the gradation as a \emph{fixed degree} and to $y$ as a \emph{twisted degree}.
By the \emph{antidiagonal} we mean the line $y=-x$. Whenever we want to restrict our attention to $\mathbb{Z}$-grading, we will use an asterisk $\ast$ instead.

We put $X^e_\star:=\pi^Q_\star(F(Q_+,X))$. Note that by the induction-restriction adjunction $X^e_{x+y\sigma}$ is the same as the $\mathbb{Z}[Q]$-module $\pi_{x+y}(\res^Q_e(X))$, which we abbreviate as $\pi_{x+y}(X)$.  A map $\phi\colon X\to Y$ that induces an isomorphism $X^e_\ast\cong Y^e_\ast$ will be called a \emph{nonequivariant equivalence}. 

Note that in some papers a different grading convention for $X_\star^Q$ is used, e.g., by Dugger in \cite{MR2240234}. Our grading is related to this by $\mathbb{R}^{x+y,y}=x+y\sigma$.

\subsection{Acknowledgments}
I would like to thank John Greenlees for the idea and continuous supervision of this work. I am also very grateful to Luca Pol and Jordan Williamson for numerous discussions, comments and corrections. The final version of this paper owes its readability to their countless suggestions and patient reading of previous versions.

This paper also significantly improved as a result of the comments and suggestions made by the anonymous referee, to whom my thanks are also due.
\section{The Tate method}
\label{sec The Tate method}
In this section we recall the Tate diagram method developed in \cite{MR1230773}.

 Let $EQ$ be a free contractible $Q$-space and let us define the \emph{isotropy separation sequence} to be the cofibre sequence:

\[
\label{EQ}
\begin{tikzcd}
EQ_+\rar & S^0\rar &\widetilde{EQ}.
\end{tikzcd}
\]

Let $X$ be a $Q$-spectrum. Let $\epsilon$ be the map \[\epsilon\colon X\cong F(S^0, X)\to F(EQ_+,X).\] By smashing $\epsilon$ with the isotropy separation sequence we obtain the following diagram:
\[
\begin{tikzcd}
\label{diagram Tate intro}
EQ_+\wedge X\rar\dar["EQ_+\wedge\epsilon"]&
X\rar\dar["\epsilon"]&
\widetilde{EQ}\wedge X\dar["\widetilde{EQ}\wedge\epsilon"]\\
EQ_+\wedge F(EQ_+, X)\rar&
F(EQ_+, X)\rar&
\widetilde{EQ}\wedge F(EQ_+,X).
\end{tikzcd}
\tag{$\ast$}
\]
The left-hand vertical map $EQ_+\wedge\epsilon$ is an equivalence by the following \cite[Proposition 1.1]{MR1230773}:
\begin{lemma}
\label{prop_GM 1.2}
Let $\phi\colon X\to Y$ be a map of $Q$-spectra which is a nonequivariant equivalence. Then the induced maps
\[
\begin{array}{c}
\phi\wedge EQ_+\colon X\wedge EQ_+\to Y\wedge EQ_+ \\
F(EQ_+,\phi )\colon F(EQ_+,X)\to F(EQ_+,Y)
\end{array}
\]
are $Q$-equivalences.
\end{lemma}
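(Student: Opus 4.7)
The plan is to exploit that $EQ$ admits a model as a free $Q$-CW complex, so that both $X \wedge EQ_+$ and $F(EQ_+, X)$ can be built out of free cells of the form $Q_+ \wedge S^n$. Since a map of genuine $Q$-spectra is a $Q$-equivalence precisely when it induces isomorphisms on both $\pi^e_\ast$ and $\pi^Q_\ast$, and free cells turn $\pi^Q_\ast$ into $\pi^e_\ast$ via the Wirthmüller isomorphism
\[
\pi^Q_\ast(Z \wedge Q_+) \cong \pi^e_\ast(Z), \qquad \pi^Q_\ast F(Q_+, Z) \cong \pi^e_\ast(Z),
\]
the hypothesis that $\phi$ is a nonequivariant equivalence will propagate through the entire cellular filtration.

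First I would handle the base case: both $\phi \wedge Q_+$ and $F(Q_+, \phi)$ are $Q$-equivalences. Indeed, nonequivariantly they reduce to $\phi$ itself (smashing or functioning out of a free orbit is the underlying construction), and on $\pi^Q_\ast$ the two Wirthmüller isomorphisms above identify the maps with nonequivariant $\phi_\ast$, which is an isomorphism by hypothesis.

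For the map $\phi \wedge EQ_+$ I would then fix a skeletal filtration $EQ_+^{(0)} \hookrightarrow EQ_+^{(1)} \hookrightarrow \cdots$ whose successive cofibres are wedges of free cells $Q_+ \wedge S^n$. Induction on $n$, combined with the five-lemma applied to the long exact sequences of $\pi^Q_\ast$ obtained by smashing $\phi$ into these cofibre sequences, shows that $\phi \wedge EQ_+^{(n)}$ is a $Q$-equivalence for every $n$; the desired conclusion then follows on passing to the sequential colimit, which commutes with $\pi^Q_\ast$.

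For the map $F(EQ_+, \phi)$ the dual argument applies: applying $F(-, X)$ and $F(-, Y)$ to the skeletal cofibre sequences produces fibre sequences, and the same induction yields that $F(EQ_+^{(n)}, \phi)$ is a $Q$-equivalence for every $n$. The only subtle step is passing to the homotopy limit: writing $F(EQ_+, X) \simeq \operatorname{holim}_n F(EQ_+^{(n)}, X)$, one has the Milnor short exact sequence
\[
0 \to {\lim_n}^1 \pi^Q_{\ast+1} F(EQ_+^{(n)}, X) \to \pi^Q_\ast F(EQ_+, X) \to \lim_n \pi^Q_\ast F(EQ_+^{(n)}, X) \to 0,
\]
and the analogous sequence for $Y$, so the five-lemma finishes the proof. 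The main technical obstacle is therefore this $\operatorname{holim}$ step and the tidy handling of the $\lim^1$ term; all the ingredients involved, however, are entirely standard.
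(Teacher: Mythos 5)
The paper does not actually prove this lemma; it quotes it directly from Greenlees--May \cite[Proposition~1.1]{MR1230773} without reproducing the argument. Your proposal is essentially the proof that appears there: cellular induction over the free $Q$-CW filtration of $EQ_+$, using the Wirthm\"uller/adjunction identifications $\pi^Q_\ast(Q_+\wedge Z)\cong\pi^e_\ast(Z)\cong\pi^Q_\ast F(Q_+,Z)$ for the base case, the five lemma for the inductive step, and a (co)limit argument at the end. The argument is correct.

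Two small remarks. First, your phrase ``nonequivariantly they reduce to $\phi$ itself'' is loose: $\operatorname{res}^Q_e(\phi\wedge Q_+)$ is $\phi\vee\phi$ and $\operatorname{res}^Q_e F(Q_+,\phi)$ is $\phi\times\phi$, but of course either is still a nonequivariant equivalence, so the conclusion stands. In fact, for the underlying statement you do not need the filtration at all: $\operatorname{res}^Q_e EQ_+\simeq S^0$ because $EQ$ is nonequivariantly contractible, so both $\operatorname{res}^Q_e(\phi\wedge EQ_+)$ and $\operatorname{res}^Q_e F(EQ_+,\phi)$ are homotopic to $\operatorname{res}^Q_e\phi$; the cellular induction is only needed for $\pi^Q_\ast$. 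Second, the $\lim^1$/Milnor-sequence step you flag as the ``main technical obstacle'' can be bypassed entirely: a levelwise equivalence of towers of spectra already induces an equivalence on homotopy limits, so once each $F(EQ_+^{(n)},\phi)$ is known to be a $Q$-equivalence you can conclude directly at the spectrum level, without passing through homotopy groups of the tower. Your Milnor-sequence argument is not wrong, just heavier than necessary.
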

Since the map $\epsilon$ is a nonequivariant equivalence, the map $EQ_+\wedge\epsilon$ is a $Q$-equivalence. Therefore the right-hand square is a homotopy pullback square.

We define
\begin{align*}
X^h&:=F\left(EQ_+,X\right), \\
X_h&:=EQ_+\wedge X, \\
X^t&:=F\left(EQ_+,X\right)\wedge\widetilde{EQ}=X^h\wedge\widetilde{EQ}, \\
X^\Phi&:=\widetilde{EQ}\wedge X.
\end{align*}
We also put $X_{hQ}:=(X_h)^Q$, respectively for $X^{hQ}$, $X^{\Phi Q}$ and $X^{tQ}$.

After renaming entries in the diagram \eqref{diagram Tate intro} we obtain the following commutative diagram, called the \emph{Tate diagram}:
\[
\label{Tate_square}
\begin{tikzcd}
X_h \rar["f"]\dar & X \rar["g"]\dar["\epsilon"] & X^\Phi\dar["\epsilon_t"] \\
X_h \rar["\nu "] & X^h \rar["\theta"] & X^t.
\end{tikzcd}
\]

The right-hand square in the Tate diagram is a homotopy pullback square of $Q$-spectra and is known as the \emph{Tate square}. If additionally $X$ is a ring $Q$-spectrum, all of the corners of the Tate square are also ring $Q$-spectra and the square is a homotopy pullback of ring $Q$-spectra. The analogous statement holds for $X$-module $Q$-spectra (see \cite[Proposition 3.5]{MR1230773}).

We will need the following fact about $X_h$:
\begin{prop}
\label{prop X_h is a module over X^h}
If $X$ is a ring $Q$-spectrum, $X_h$ is a module over $X^h$ in the homotopy category.
\end{prop}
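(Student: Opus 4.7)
The plan is to exhibit $X_h$ as (equivalent to) the canonical module $EQ_+\wedge X^h$ over the ring spectrum $X^h$, and then transport the module structure across an equivalence provided by Lemma \ref{prop_GM 1.2}.

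First I would observe that $X^h=F(EQ_+,X)$ is itself a ring $Q$-spectrum. This is a standard fact: the diagonal of the space $EQ$ gives a comultiplication $\Delta\colon EQ_+\to EQ_+\wedge EQ_+$, and together with the multiplication $\mu$ on $X$ it produces the composite
\[
F(EQ_+,X)\wedge F(EQ_+,X)\longrightarrow F(EQ_+\wedge EQ_+,X\wedge X)\xrightarrow{\;F(\Delta,\mu)\;}F(EQ_+,X),
\]
which makes $X^h$ into a ring $Q$-spectrum. For any ring $Q$-spectrum $R$ and any pointed $Q$-space $A$, the smash product $A\wedge R$ inherits a canonical left $R$-module structure via $R\wedge A\wedge R\to A\wedge R\wedge R\xrightarrow{\id\wedge\mu_R}A\wedge R$; applied to $R=X^h$ and $A=EQ_+$ this makes $(X^h)_h=EQ_+\wedge X^h$ an $X^h$-module.

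Second, the natural map $\epsilon\colon X\to F(EQ_+,X)=X^h$ is a nonequivariant equivalence (it is adjoint to the evaluation $EQ_+\wedge X\to X$, and $EQ_+\to S^0$ is a nonequivariant equivalence). By Lemma \ref{prop_GM 1.2}, smashing with $EQ_+$ promotes this to an honest $Q$-equivalence
\[
EQ_+\wedge\epsilon\colon X_h=EQ_+\wedge X\xrightarrow{\;\simeq\;}EQ_+\wedge X^h=(X^h)_h.
\]
Choosing a homotopy inverse in the $Q$-equivariant homotopy category and transporting the $X^h$-module structure from $(X^h)_h$ across it endows $X_h$ with an $X^h$-module structure in the homotopy category, as claimed.

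There is no substantive obstacle here; the statement really is a formal consequence of (i) the diagonal on $EQ_+$, which passes ring structure through $F(EQ_+,-)$, and (ii) Lemma \ref{prop_GM 1.2}, which allows us to replace the homotopy orbit of $X$ by the homotopy orbit of $X^h$. The only point worth flagging is that the construction depends on a chosen homotopy inverse, so the module structure is natural only after passing to the homotopy category, which is precisely the content of the proposition.
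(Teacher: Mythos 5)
Your proof is correct, and it reaches the same conclusion by a slightly different route. The paper's proof does not bother to invoke the ring structure on $X^h$: it defines the action map directly as
\[
X^h\wedge X_h = F(EQ_+,X)\wedge(EQ_+\wedge X)\xrightarrow{\bar\epsilon\wedge 1} X\wedge EQ_+\wedge X\xrightarrow{\text{twist}} EQ_+\wedge X\wedge X\xrightarrow{EQ_+\wedge\mu_X}EQ_+\wedge X=X_h,
\]
where $\bar\epsilon$ is the homotopy inverse to $EQ_+\wedge\epsilon$ and $\mu_X$ is the multiplication on $X$ itself. You instead first observe that $X^h=F(EQ_+,X)$ is a ring via the diagonal on $EQ_+$ (i.e.\ lax monoidality of $F(EQ_+,-)$), note that $EQ_+\wedge X^h$ is tautologically an $X^h$-module, and then transport this structure across the $Q$-equivalence $EQ_+\wedge\epsilon\colon X_h\to EQ_+\wedge X^h$. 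Both proofs hinge on Lemma \ref{prop_GM 1.2} to invert $EQ_+\wedge\epsilon$ in the homotopy category; the paper's version is slightly more economical because it multiplies in $X$ rather than in $X^h$, while yours is arguably more conceptual, since it isolates the statement ``$EQ_+\wedge R$ is an $R$-module'' and then applies transport of structure. Either presentation is fine, and the two module structures agree because $\bar\epsilon$ intertwines $\mu_X$ and $\mu_{X^h}$ after smashing with $EQ_+$.
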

\begin{proof}
By Proposition \ref{prop_GM 1.2} the map $EQ_+\wedge\epsilon$ is a $Q$-equivalence, so it has an inverse in the homotopy category. Denote this inverse by $\bar{\epsilon}$. Let $\mu\colon X\wedge X\to X$ be the multiplication in $X$. The $X^h$-module structure is given by the following composite:
\[
\begin{tikzcd}[column sep=large]
F(EQ_+,X)\wedge (EQ_+\wedge X)
\rar["\bar{\epsilon}\wedge 1"] &
X\wedge EQ_+\wedge X\rar["twist"]&
EQ_+\wedge X\wedge X
\rar["EQ_+\wedge\mu"] &
EQ_+\wedge X.
\end{tikzcd}
\]
\end{proof}
Now we will describe an important multiplicative property of the spectra $X^\Phi$ and $X^t$. Let $a$ be the element of $\pi^Q_{-\sigma}(S^0)$ corresponding to the inclusion $S^0\to S^\sigma$. The map $a\wedge 1\colon X\to S^\sigma\wedge X$ gives a multiplication by $a$ in $X$. If $a$ acts as an isomorphism on $X^Q_\star$ we say that $X$ is \emph{$a$-periodic}.
\begin{lemma}
\label{lemma_a periodicity of geometric fp and tate}
If $X$ is a $Q$-spectrum, then
\[
\pi^Q_\star(X\wedge\widetilde{EQ})= a^{-1}X^Q_\star.
\]
So the spectrum $X\wedge\widetilde{EQ}$ is $a$-periodic. In particular $X^\Phi$ and $X^t$ are $a$-periodic.
\end{lemma}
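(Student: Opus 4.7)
The plan is to realise $\widetilde{EQ}$ as a sequential homotopy colimit of representation spheres. Since $EQ$ may be taken to be the unit sphere $S(\infty\sigma)$ in the countable sum of sign representations, we have $EQ = \operatorname{colim}_n S(n\sigma)$. Passing to the colimit in $n$ in the cofibre sequences $S(n\sigma)_+ \to S^0 \to S^{n\sigma}$ recovers the isotropy separation sequence, in which the structure maps $S^{n\sigma}\to S^{(n+1)\sigma}$ are obtained by smashing with $a\colon S^0\to S^\sigma$ (this is precisely how $a$ is defined as the Euler class of $\sigma$). Hence
\[
\widetilde{EQ} \;\simeq\; \operatorname{hocolim}\bigl(S^0 \xrightarrow{\,a\,} S^{\sigma} \xrightarrow{\,a\,} S^{2\sigma} \xrightarrow{\,a\,} \cdots\bigr).
\]

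Smashing with $X$ (which preserves sequential homotopy colimits) and applying $\pi^Q_\star$ (which also commutes with sequential homotopy colimits of spectra) gives
\[
\pi^Q_\star\bigl(X\wedge\widetilde{EQ}\bigr) \;\cong\; \operatorname{colim}_n \pi^Q_{\star}\bigl(S^{n\sigma}\wedge X\bigr) \;\cong\; \operatorname{colim}_n \pi^Q_{\star - n\sigma}(X),
\]
where each transition map is multiplication by $a$. By construction this colimit is $a^{-1}X^Q_\star$. The $a$-periodicity is then automatic: multiplication by $a$ shifts the colimit system by one, which is cofinal with the identity, so $a$ acts invertibly.

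For the final clause, $X^\Phi = \widetilde{EQ}\wedge X$ is the assertion just proved, while $X^t = F(EQ_+, X)\wedge\widetilde{EQ}$ is obtained by applying the same argument to the $Q$-spectrum $F(EQ_+, X)$ in place of $X$. The main (minor) obstacle is the identification of the transition maps in the colimit presentation of $\widetilde{EQ}$ with multiplication by $a$; once $EQ$ is modelled as $S(\infty\sigma)$ and $a$ is recalled as the inclusion $S^0\hookrightarrow S^\sigma$, this is a direct unwinding of definitions.
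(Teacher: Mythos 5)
Your proof is correct and follows essentially the same route as the paper: both realise $\widetilde{EQ}$ as the sequential homotopy colimit $S^0 \to S^\sigma \to S^{2\sigma} \to \cdots$ of sign representation spheres with structure maps $a$, smash with $X$, and use that $\pi^Q_\star$ commutes with sequential colimits. You supply slightly more detail than the paper on why the transition maps are $a$ and on why $X^t$ falls under the same statement (by taking $F(EQ_+,X)$ in place of $X$), but the argument is the same.
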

\begin{proof}
The $Q$-space $\widetilde{EQ}$ has a model $S^{\infty\sigma}$. So $X\wedge\widetilde{EQ}$ may be seen as a homotopy colimit of the sequence:
\[
\begin{tikzcd}
X \rar["a"] & S^{\sigma}\wedge X\rar["a"] & S^{2\sigma}\wedge X\rar["a"]&\ldots.
\end{tikzcd}
\]
After applying $\pi^Q_\star$ we obtain the following sequence:
\[
\begin{tikzcd}
X^Q_\star\rar["a"]&X^Q_{\star-\sigma}\rar["a"]&X^Q_{\star-2\sigma}\rar["a"]&\ldots
\end{tikzcd}
\]
from which we get an identification
\[
\pi^Q_\star(X\wedge\widetilde{EQ})\cong a^{-1}X^Q_\star.
\]
Thus $a$ acts as an isomorphism on $\pi^Q_\star(X\wedge\widetilde{EQ})$.
\end{proof}

Let $\M$ be a $Q$-Mackey functor. To compute $\hm^Q_\star$ we will use the following method, described in \cite{FourApproaches}: 
\begin{enumerate}
\item Firstly we compute $\hm^{hQ}_\star$ and $\ho{\hm}{\star}$ using the homotopy fixed point and homotopy orbit spectral sequences - details are in Section \ref{section hfp, ho and tate}.
\item Using Lemma \ref{lemma_a periodicity of geometric fp and tate} we deduce $\hm^{tQ}_\star$ from $\hm^{hQ}_\star$ by inverting $a$, since $\hm^t=\hm^h\wedge\widetilde{EQ}$.
\item We infer $\hm^{\Phi Q}_\star$  from $\hm^{tQ}_\star$. Since both theories are $a$-periodic, we need only to compute $\hm^{\Phi Q}_n$ for $n\in\mathbb{Z}$. We calculate them using the fact that fibres of $\epsilon$ and $\epsilon_t$ in the Tate diagram are equivalent.
\item Finally we deduce $\hm^Q_\star$ from the Tate diagram.
\end{enumerate}
\section{Structure of $\hm^Q_{\ast\sigma}$}
\label{sec structure of hm y axis}
From now on let $\M$ be a Mackey functor. We start with a description of the subgroup $\hm^Q_{\ast\sigma}$ with fixed degree $0$. In this case the computations follow from the cofibre sequence
\[
\label{cofibre}
\begin{tikzcd}
Q_+\rar & S^0 \rar & S^\sigma.
\end{tikzcd}\tag{$\dagger$}
\]
From this sequence we can deduce the following lemma, which gives a complete description of the multiplication by the element $a\in\pi^Q_{-\sigma}(S^0)$ on Eilenberg-MacLane spectra:
%
%Lemma 3.1
%
\begin{lemma}
\label{lemma multiplication by a}
The map $a\colon \hm^Q_{x+y\sigma}\to \hm^Q_{x+(y-1)\sigma}$ is:
\begin{enumerate}
\item a monomorphism if $x-1=-y$;
\item an epimorphism if $x=-y$;
\item an isomorphism otherwise.
\end{enumerate}
\end{lemma}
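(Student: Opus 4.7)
The plan is to extract the lemma from the long exact sequence of $RO(Q)$-graded homotopy groups obtained by smashing the cofibre sequence \eqref{cofibre} with $\hm$. Since the middle arrow $S^0\to S^\sigma$ in \eqref{cofibre} is, by definition, the class $a$, applying $\pi^Q_\star(\hm\wedge -)$ yields an exact sequence of shape
\[
\cdots\to \pi^Q_V(\hm\wedge Q_+)\to \hm^Q_V \xrightarrow{a} \hm^Q_{V-\sigma} \to \pi^Q_{V-1}(\hm\wedge Q_+)\to\cdots,
\]
after identifying $\pi^Q_V(\hm\wedge S^\sigma)\cong \hm^Q_{V-\sigma}$. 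The lemma then reduces to a vanishing analysis of the two outer terms flanking multiplication by $a$.

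To compute those outer terms I would invoke the induction--restriction (Wirthm\"{u}ller) adjunction, which gives $\pi^Q_V(\hm\wedge Q_+)\cong \pi^e_V(\hm)$. Because the underlying non-equivariant spectrum of $\hm$ is the ordinary Eilenberg--MacLane spectrum $H\M(Q/e)$, whose homotopy is concentrated in degree zero, this group is $\M(Q/e)$ when the total underlying dimension $|V|=x+y$ vanishes, and is zero otherwise. Reading the long exact sequence at $V=x+y\sigma$: the kernel of $a$ is a quotient of $\pi^e_V(\hm)$, hence vanishes unless we sit on the antidiagonal $x+y=0$, so $a$ is injective off that line; the cokernel of $a$ embeds into $\pi^e_{V-1}(\hm)$, hence vanishes unless we sit on the parallel line $x+y=1$, so $a$ is surjective off that line. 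Intersecting the two complements produces the isomorphism range, while restricting to either individual line yields the one-sided control asserted in cases (1) and (2).

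The only step that requires genuine care is verifying that the middle arrow in the long exact sequence really is multiplication by $a$ and not, say, its negative or some other shifted map. This is immediate from the definition of $a\in\pi^Q_{-\sigma}(S^0)$ as the class of the inclusion $S^0\to S^\sigma$: smashing $\hm$ with this defining map induces multiplication by $a$ on $\pi^Q_\star(\hm)$ via the pairing. Beyond this routine identification I foresee no obstacles; once $\pi^e_\star(\hm)$ is pinned down by Wirthm\"{u}ller, the argument is a short exact-sequence chase.
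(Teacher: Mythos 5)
Your proof is correct and matches the paper's argument exactly: smash the cofibre sequence $Q_+\to S^0\to S^\sigma$ with $\hm$, pass to the $RO(Q)$-graded long exact sequence, and identify the flanking terms as $\pi_{|V|}(\res^Q_e\hm)$ via the Wirthm\"uller/induction--restriction adjunction, which vanish away from underlying degree zero because $\res^Q_e\hm$ is the nonequivariant Eilenberg--MacLane spectrum of $\M(Q/e)$. One remark worth recording: your derivation correctly locates the monomorphism locus on the line $x+y=1$, equivalently $1-x=y$, whereas the lemma as printed reads ``$x-1=y$''; this is a sign slip in the statement, as one sees by checking $a\colon\hm^Q_{\sigma}\hookrightarrow\hm^Q_{0}$ (the inclusion $\ker(\resm)\hookrightarrow\M(Q/Q)$ from Proposition \ref{prop groups hmq for x=0}), which occurs at $(x,y)=(0,1)$ and satisfies $1-x=y$ but not $x-1=y$.
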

\begin{proof}
After smashing \eqref{cofibre} with $\hm$ and applying $\pi^Q_{x+y\sigma}(-)$ we obtain an exact sequence:
\[
\begin{tikzcd}
\hm^e_{x+y\sigma} \rar & \hm_{x+y\sigma}^Q \rar["a"] & \hm_{x+(y-1)\sigma}^Q \rar & \hm^e_{(x-1)+y\sigma}.
\end{tikzcd}
\]
Note that the outer groups are of the form $\hm^e_{x+y\sigma}\cong\pi_{x+y}(\res^Q_e\hm)$. The spectrum $\res^Q_e\hm$ is the Eilenberg-MacLane spectrum associated to the abelian group $V=\M(Q/e)$. Thus $\hm^e_{x+y\sigma}=0$ unless $x=-y$ and the lemma follows.
\end{proof}
\begin{rem}
The same result can be found in \cite[Lemma 8.7(b)]{MR2025457}.
\end{rem}
Lemma \ref{lemma multiplication by a} simplifies the calculations of the groups $\hm^Q_{y\sigma}$ for $y\in\mathbb{Z}$. These are also derived from the cofibre sequence \eqref{cofibre}.

%
%Proposition 3.2
%
\begin{prop}
\label{prop groups hmq for x=0}
The groups $\hm^Q_{y\sigma}$ are given by:
\[
\hm^Q_{y\sigma}=
\begin{cases}
\ker(\resm) & \textrm{if }y>0 \\
\coker(\trm) & \textrm{if }y<0 \\
\M(Q/Q) & \textrm{if }y=0.
\end{cases}
\]
Moreover, the multiplication $a\colon\hm^Q_{\sigma}\to\hm^Q_{0}$ is the inclusion of $\ker(\resm)$ in $M(Q/Q)$ and $a\colon\hm^Q_{0}\to\hm^Q_{-\sigma}$ is the projection onto $\coker(\trm)$.
\end{prop}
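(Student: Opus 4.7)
The case $y=0$ is the defining property of $\hm$: one has $\pi^Q_0(\hm)=\M(Q/Q)$ by construction. For $y\notin\{0,1\}$, smashing \eqref{cofibre} with $\hm$ and taking $\pi^Q_{y\sigma}$ yields the exact sequence
\[
\hm^e_{y\sigma} \to \hm^Q_{y\sigma} \xrightarrow{a} \hm^Q_{(y-1)\sigma} \to \hm^e_{(y-1)\sigma},
\]
both flanking terms of which vanish because $\res^Q_e\hm\simeq HV$ has trivial homotopy outside degree $0$ and $y,y-1\neq 0$. So $a$ is an isomorphism, yielding $\hm^Q_{y\sigma}\cong \hm^Q_{\sigma}$ for all $y\geq 1$ and $\hm^Q_{y\sigma}\cong \hm^Q_{-\sigma}$ for all $y\leq -1$. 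The problem thus reduces to computing $\hm^Q_{\pm\sigma}$ and the two boundary maps $a$ adjacent to degree $0$.

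Applying $\pi^Q_\sigma$ and using $\hm^e_\sigma=0$, $\hm^e_0=V$ gives
\[
0\to \hm^Q_\sigma \xrightarrow{a} \M(Q/Q) \xrightarrow{\delta_1} V,
\]
so $\hm^Q_\sigma=\ker(\delta_1)$ and $a$ is the inclusion. Applying $\pi^Q_0$ gives
\[
V\xrightarrow{\delta_0} \M(Q/Q) \xrightarrow{a} \hm^Q_{-\sigma}\to 0,
\]
so $\hm^Q_{-\sigma}=\coker(\delta_0)$ and $a$ is the projection.

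The main obstacle is identifying $\delta_0=\trm$ and $\delta_1=\resm$. The map $\delta_0$ is induced by the collapse $Q_+\to S^0$ smashed with $\hm$; under the induction-restriction isomorphism $\pi^Q_0(Q_+\wedge\hm)\cong V$, this is by construction the transfer of the $\pi^Q_0$-Mackey functor of $\hm$, hence equals $\trm$. For $\delta_1$, I would invoke Spanier-Whitehead duality: using $DQ_+\simeq Q_+$ and $DS^\sigma\simeq S^{-\sigma}$, \eqref{cofibre} dualizes to a cofibre sequence $S^{-\sigma}\to S^0\to Q_+$, and $\delta_1$ corresponds to the map $\pi^Q_0(\hm)\to\pi^Q_0(Q_+\wedge\hm)=V$ induced by $S^0\to Q_+$, which is the restriction $\resm$ (as restriction is dual to transfer). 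With these identifications the proof is complete.
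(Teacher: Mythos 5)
Your proof is correct and, at its core, follows the same strategy as the paper: smash the cofibre sequence $Q_+\to S^0\to S^\sigma$ with $\hm$, use multiplication by $a$ to reduce to $y=\pm 1$ (this reduction is already the paper's Lemma \ref{lemma multiplication by a}, which you re-derive inline), and then identify the two boundary maps adjacent to degree $0$ as $\trm$ and $\resm$. Your identification of $\delta_0=\trm$ is exactly the paper's.

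Where you diverge is in identifying $\delta_1=\resm$. The paper applies the contravariant functor $[-,\hm]^Q$ to the \emph{unsmashed} cofibre sequence $Q_+\to S^0\to S^\sigma$, which produces the exact sequence $0\to[S^\sigma,\hm]^Q\to[S^0,\hm]^Q\to[Q_+,\hm]^Q$ in which the rightmost arrow is induced by precomposition with the collapse $Q_+\to S^0$; under the adjunction $[Q_+,\hm]^Q\cong\pi^e_0(\hm)=V$ this is visibly the restriction $\resm$, with no Puppe boundary to identify. You instead stay in the covariant setting, applying $\pi^Q_\sigma(-)$ to the smashed cofibre sequence, so $\delta_1$ arises as the connecting map $\pi^Q_\sigma(S^\sigma\wedge\hm)\to\pi^Q_{\sigma-1}(Q_+\wedge\hm)$, and you recognize the underlying Puppe map $S^\sigma\to\Sigma Q_+\cong Q_+\wedge S^\sigma$ via Spanier--Whitehead duality as (a suspension of) the stable transfer $S^0\to Q_+$, which induces restriction on $\pi^Q_0$. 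This is valid, but it adds a layer --- the identification of the Puppe boundary with the dual of the collapse, and the standard fact that that dual map induces restriction --- that the paper's choice of the contravariant form makes unnecessary. Both routes are sound; the paper's is a little more economical, while yours illustrates that the two exact sequences are related by duality.
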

\begin{proof}
The case $y=0$ is the definition of an Eilenberg-MacLane spectrum of a Mackey functor. So we need to prove the proposition for $y\neq 0$.

By Lemma \ref{lemma multiplication by a} it is enough to prove the claim for $y=1$ and $y=-1$. If we smash the cofibre sequence \eqref{cofibre} with $\hm$ and apply $\pi^Q_0(-)$ we obtain an exact sequence
\[
\begin{tikzcd}
H\M^e_0 \rar["\trm"] & H\M^Q_0 \rar["a"] & H\M^Q_{-\sigma} \rar & 0.
\end{tikzcd}
\]
Thus $\hm^Q_{-\sigma}=\coker(\trm)$ and the multiplication by $a$ is the projection. Analogously, by applying $[-,\hm]^Q$ to \eqref{cofibre} we get that $\hm^Q_{\sigma}=\ker(\resm)$.
\end{proof}
\section{Homotopy fixed points, homotopy orbits and the Tate spectrum of $\hm$}
\label{section hfp, ho and tate}
In this section we compute the coefficients of the $Q$-spectra $\hm^h$, $\hm_h$ and $\hm^t$. These are given in Proposition \ref{prop coefficients of hfp and ho} and Proposition \ref{prop coefficients of Tate}. However, firstly we need to give a brief description of the homotopy fixed point and homotopy orbit spectral sequences.
\subsection{Homotopy fixed point and homotopy orbit spectral sequences}
\label{subsec hfp ho spectral sequence}
The space $EQ$ used in the definitions of $Q$-spectra $\hm^h$, $\hm_h$ and $\hm^t$ (see Section \ref{sec The Tate method}) has a very convenient model of the form $S(\infty\sigma)$. This space has a filtration by skeleta
\[
\label{filtration}
S(\sigma )\subset S(2\sigma )\subset S(3\sigma)\subset\ldots
\tag{$\ast$}
\]
and consequently, if $X$ is a $Q$-spectrum, we obtain a filtration of the spectra $X_h$ and $X^h$. The spectral sequences associated to these filtrations take the form
\[
E^2_{pq}=H_p(Q;\pi_q(X))\Rightarrow \pi_{p+q}(X_{hQ})
\]
and
\[
E_2^{pq}=H^p(Q; \pi_{-q}(X))\Rightarrow \pi_{-(p+q)}(X^{hQ}).
\]
The first spectral sequence is called \emph{the homotopy orbit spectral sequence} and the second is known as \emph{the homotopy fixed point spectral sequence}. Details of the construction may be found in \cite[Chapter 10]{MR1230773}.

Both spectral sequences can be used to compute the $RO(Q)$-graded coefficients of $X_h$ and $X^h$. To this end we note that
\[
X^{hQ}_{x+y\sigma}=[S^{x+y\sigma}, F(EQ_+,X)]^Q\cong [S^x, F(EQ_+, F(S^{y\sigma}, X))]^Q= \pi_x(F(S^{y\sigma},X)^{hQ}).  
\]
One may argue similarly for the homotopy orbits, and thus we can arrange the homotopy orbit and homotopy fixed point spectral sequences to be trigraded:
\[
E^2_{pq}(y)=H_p\left(Q; \pi_q\left(F(S^{y\sigma},X)\right)\right)\Rightarrow \pi_{p+q}(F(S^{y\sigma},X)_{hQ})
\]
and
\[
E_2^{pq}(y)=H^p\left(Q;\pi_{-q}\left(F(S^{y\sigma},X)\right)\right)\Rightarrow \pi_{-(p+q)}(F((S^{y\sigma}),X)^{hQ}).
\]
Note that even though the spectral sequences are trigraded, the differentials live on the "layers" corresponding to the single value of $y$. If $X$ is a ring $Q$-spectrum then the pairings
\[
S^{y_1\sigma}\wedge S^{y_2\sigma}\to S^{(y_1+y_2)\sigma}
\]
give the trigraded homotopy fixed point spectral sequence a multiplicative structure.

It will prove useful to have a description of the $E_1$-page. Since we will only use it for the homotopy fixed point spectral sequence, we omit here the description of the $E^1$-page of the homotopy orbit spectral sequence.

The filtration \eqref{filtration} gives a filtration of $X^{hQ}\cong F_Q(EQ_+,X)$, the spectrum of $Q$-equivariant maps from $EQ_+$ to $X$. Therefore the $E_1$-page has the form
\[
E^{pq}_1=\pi_{-q}(X).
\]
However, there is a more useful description of $E_1^{pq}$ given in \cite[Chapter 9]{MR1230773} which allows us also to determine differentials. 
\begin{prop}
\label{prop isomorphism from GM}
There is an isomorphism
\[
E^{pq}_1=[Q_+\wedge S^p,\Sigma^{p+q}X]^Q\cong\Hom_{\mathbb{Z}[Q]}\left(H_p(Q_+\wedge S^p),X^e_{-q}\right).
\]
\end{prop}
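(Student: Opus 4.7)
My plan is to establish the two identifications in the statement separately. The first equality is essentially a restatement of the construction of the homotopy fixed-point spectral sequence: the filtration of $EQ_+$ by the spheres $S(n\sigma)_+$ has successive cofibres of the form $Q_+\wedge S^p$ (since each $S(n\sigma)$ is obtained from $S((n-1)\sigma)$ by attaching a single free $Q$-cell), and applying $F(-,X)^Q$ to the filtration identifies the $E_1$-term with the claimed $Q$-equivariant mapping group. The genuine content of the proposition is therefore the second isomorphism, with the $\Hom_{\mathbb{Z}[Q]}$-description.

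For that, I would identify both sides with $X^e_{-q}$. On the left, the Wirthm\"uller (induction-restriction) adjunction gives
\[
[Q_+\wedge S^p,\, \Sigma^{p+q}X]^Q \;\cong\; [S^p,\, \Sigma^{p+q}\res^Q_e X]^e \;=\; \pi_{-q}(\res^Q_e X) \;=\; X^e_{-q}.
\]
On the right, a K\"unneth computation yields $H_p(Q_+\wedge S^p)\cong \tilde H_0(Q_+)\otimes \tilde H_p(S^p)\cong\mathbb{Z}[Q]$ as a $\mathbb{Z}[Q]$-module, with the action coming from the translation action on $Q_+$, so
\[
\Hom_{\mathbb{Z}[Q]}\!\left(H_p(Q_+\wedge S^p),\, X^e_{-q}\right) \;\cong\; \Hom_{\mathbb{Z}[Q]}(\mathbb{Z}[Q],\, X^e_{-q}) \;\cong\; X^e_{-q}.
\]

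To promote these parallel computations to a natural isomorphism, I would construct the comparison map explicitly: send a $Q$-equivariant $f\colon Q_+\wedge S^p\to\Sigma^{p+q}X$ to the induced map on $p$-th (non-equivariant) homology $H_p(f)\colon\mathbb{Z}[Q]\to X^e_{-q}$, which is automatically $\mathbb{Z}[Q]$-linear because $f$ is $Q$-equivariant. To verify that this Hurewicz-type map is a bijection, I would evaluate it on the generator $[e]\in\mathbb{Z}[Q]$, which corresponds via the Hurewicz map to the fundamental class of the wedge summand $\{e\}_+\wedge S^p\simeq S^p$; under the Wirthm\"uller adjunction, evaluating $f$ on this summand precisely recovers the non-equivariant class that we just identified with $X^e_{-q}$, so the two sides match on generators. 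The most delicate point is checking that the translation $Q$-action on $H_p(Q_+\wedge S^p)$ matches the $Q$-action implicit in the Wirthm\"uller identification, so that the comparison map really preserves the $\mathbb{Z}[Q]$-structures; granted this, the classical Hurewicz isomorphism for $S^p$ completes the proof.
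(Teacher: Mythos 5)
The paper does not actually prove this Proposition; it simply cites \cite[Chapter 9]{MR1230773}, so there is no in-text argument to compare against. Your direct proof has the right shape, but there is a genuine slip in the construction of the comparison map that you should fix: you send $f$ to ``the induced map on $p$-th non-equivariant homology $H_p(f)$.'' Taken literally, this lands in $H_p(\Sigma^{p+q}\res^Q_e X)$, and that group is $H_{-q}(\res^Q_e X)$ --- the spectrum homology of $\res^Q_e X$, not its homotopy group $\pi_{-q}(\res^Q_e X)=X^e_{-q}$. The two are different for a general $X$, so the map as written does not have the target you claim.

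The correct comparison map is $f\mapsto \pi_p(f)$: since $Q_+\wedge S^p$ is a finite $(p-1)$-connected $Q$-free spectrum, stable Hurewicz gives a $\mathbb{Z}[Q]$-linear isomorphism
\[
\pi_p(Q_+\wedge S^p)\;\cong\;H_p(Q_+\wedge S^p)\;\cong\;\mathbb{Z}[Q],
\]
and the $Q$-equivariance of $f$ makes $\pi_p(f)\colon\pi_p(Q_+\wedge S^p)\to\pi_p(\Sigma^{p+q}\res^Q_e X)=X^e_{-q}$ a $\mathbb{Z}[Q]$-module map. Precomposing with the inverse Hurewicz isomorphism gives the desired element of $\Hom_{\mathbb{Z}[Q]}(H_p(Q_+\wedge S^p),X^e_{-q})$. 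In other words, Hurewicz is applied only on the \emph{source}, never on the target. Once you make this substitution, your evaluation-on-$[e]$ argument goes through verbatim and does prove bijectivity via the induction-restriction adjunction. The rest of the proposal (the identification of the filtration quotients, the K\"unneth computation of $H_p(Q_+\wedge S^p)$, and the check that the translation action matches the adjunction) is fine.
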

The differential 
\[d_1:E^{p,q}\to E^{p+1,q}\]
is induced by a map $\partial_*: H_{p-1}(Q_+\wedge S^p)\to H_p(Q_+\wedge S^{p-1}) $, which is further induced in homology by the \emph{geometric boundary}:
\[
\partial: Q_+\wedge S^p \to \Sigma S\left( (p-1)\sigma \right)_+\to \Sigma (Q_+\wedge S^{p-1}).
\]
Note that the complex
\[
\begin{tikzcd}
\ldots \rar & H_{p+1}(Q_+\wedge S^{p+1}) \rar["\partial_*"] & H_p(Q_+\wedge S^p) \rar["\partial_*"] & H_{p-1}(Q_+\wedge S^{p-1}) \rar &\ldots 
\end{tikzcd}
\]
is the cellular complex of $S(\infty\sigma)$ and thus it is exact ($S(\infty\sigma)$ is contractible) with differentials given by the degrees of attaching maps. Therefore, since $H_p(Q_+\wedge S^p)\cong\mathbb{Z}[Q]$, the complex above gives us a $2$-periodic $\mathbb{Z}[Q]$-resolution of $\mathbb{Z}$. Thus along the $q$-th row on the $E_1$-page we have the complex computing the group cohomology with coefficients in $X^e_{-q}$. This concludes the description of the $E_1$-page.
\subsection{Calculations for Eilenberg-MacLane spectra}

Now we specialise to the case $X=\hm$. Recall the notation $V:=\M(Q/e)$.
\begin{prop}
\label{prop coefficients of hfp and ho}
The $RO(Q)$-graded coefficients of $\hm_h$ and $\hm^h$ are given by:
\begin{itemize}
\item $\left(\hm^Q_{hQ}\right)_{(y\sigma-y)+p}= H_p(Q;H^y(S^{y\sigma},V))$
\item $\hm^{hQ}_{(y\sigma-y)-p}= H^p(Q; H^y(S^{y\sigma},V))$.
\end{itemize}
\end{prop}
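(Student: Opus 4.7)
The plan is to apply the two spectral sequences set up in the previous subsection to $X=\hm$ and observe that, for each fixed value of $y$, the $E^2$-page is concentrated in a single $q$-row. The reason is simple: nonequivariantly $F(S^{y\sigma},\hm)\simeq\Sigma^{-y}\hm$, and $\res^Q_e(\hm)$ is just the ordinary Eilenberg-MacLane spectrum on the abelian group $V$. Hence its homotopy is concentrated in one degree, forcing both spectral sequences to collapse at $E_2$ with no room for differentials or extension problems.

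First I would pin down the $\mathbb{Z}[Q]$-module structure on the one nonzero row. By the restriction-induction adjunction (or directly from the definition of $F$),
\[
\pi_q\bigl(F(S^{y\sigma},\hm)\bigr) \;\cong\; \pi_{q+y}\bigl(\res^Q_e(\hm)\bigr)
\]
as abelian groups, so it is $V$ when $q=-y$ and zero otherwise. The residual $\mathbb{Z}[Q]$-action on this copy of $V$ records the $Q$-action on $V$ together with the $Q$-action on $\tilde H^y(S^{y\sigma};\mathbb{Z})\cong\tilde{\mathbb{Z}}^{\otimes y}$, which is by definition the module $H^y(S^{y\sigma};V)$.

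Feeding this into the homotopy orbits spectral sequence $E^2_{pq}(y)=H_p(Q;\pi_q(F(S^{y\sigma},\hm)))$ leaves only the row $q=-y$ nonzero, so it collapses to
\[
\pi_{p-y}\bigl(F(S^{y\sigma},\hm)_{hQ}\bigr) \;=\; H_p\bigl(Q;\,H^y(S^{y\sigma};V)\bigr).
\]
Using the natural identification $\pi_x(F(S^{y\sigma},\hm)_{hQ})=(\hm_h)^Q_{x+y\sigma}$ and reindexing $x=p-y$, i.e.\ $x+y\sigma=(y\sigma-y)+p$, gives the stated formula for $\hm_{hQ}$. The argument for $\hm^{hQ}$ is identical: the homotopy fixed points spectral sequence $E_2^{pq}(y)=H^p(Q;\pi_{-q}(F(S^{y\sigma},\hm)))$ is concentrated in the row $q=y$, collapses, and abuts to $\pi_{-(p+y)}(F(S^{y\sigma},\hm)^{hQ})$; setting $x=-(p+y)$ gives $x+y\sigma=(y\sigma-y)-p$, which is the second formula.

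The only genuinely substantive step is matching the residual $Q$-action on $\pi_{-y}(F(S^{y\sigma},\hm))$ with the twist appearing in $H^y(S^{y\sigma};V)$; once that identification is in place the rest is bookkeeping in the $RO(Q)$-grading, with the case of negative $y$ handled uniformly by interpreting $S^{y\sigma}$ as a desuspension.
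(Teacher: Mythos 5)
Your argument is correct and is essentially the same as the paper's: identify $\pi_q(F(S^{y\sigma},\hm))$ with $H^{-q}(S^{y\sigma};V)$, observe that for each fixed $y$ this is concentrated in a single row so the trigraded spectral sequences collapse at $E_2$, and then reindex into the $RO(Q)$-grading. The only difference is cosmetic — you spell out the reindexing and the residual $\mathbb{Z}[Q]$-module structure (via $\tilde{\mathbb{Z}}^{\otimes y}$) in more detail than the paper, which relegates the module-structure remark to the paragraph following its proof.
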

\begin{proof}
The proposition follows from the fact that the trigraded homotopy orbit and homotopy fixed point spectral sequences collapse on the second page.
The coefficients of the group homology/cohomology on the second page are given by
\[
\pi_p(F(S^{y\sigma},\hm))\cong H^{-p}(S^{y\sigma}, V).
\]
The (reduced) singular cohomology $H^{-p}(S^{y\sigma}, V)$ is $0$ unless $-p=y$. Thus for every  $y$ the $E_2(y)$-page consists of one row and all differentials are $0$.
\end{proof}
\begin{rem}
\label{rem cohomology of S^nsigma}
Note that since $Q$ acts on $H^y(S^{y\sigma},\mathbb{Z})$ by the degree of $\gamma$ as a map, it is isomorphic to $\mathbb{Z}$ if $y$ is even and $\tilde{\mathbb{Z}}$ if $y$ is odd. So by the Universal Coefficient Theorem $H^y(S^{y\sigma},V)$ is isomorphic to $V$ if $y$ is even and $\tilde{V}:=\tilde{\mathbb{Z}}\otimes V$ if $y$ is odd.
\end{rem}

The projective $\mathbb{Z}[Q]$-resolution of $\tilde{\mathbb{Z}}$ is given by:
\[
\begin{tikzcd}
\ldots\rar & \mathbb{Z}[Q]\rar["1-\gamma"] & \mathbb{Z}[Q]\rar["1+\gamma"] & \mathbb{Z}[Q]\rar["\tilde{\epsilon}"]&\tilde{\mathbb{Z}}\rar & 0.
\end{tikzcd}
\]
Here $\tilde{\epsilon}$ is the ring map defined by $\tilde{\epsilon}(\gamma)=-1$.

Recall the 2-periodic $\mathbb{Z}[Q]$-resolution of $\mathbb{Z}$:
\[
\begin{tikzcd}
\ldots\rar & \mathbb{Z}[Q]\rar["1+\gamma"] & \mathbb{Z}[Q]\rar["1-\gamma"] & \mathbb{Z}[Q]\rar["\epsilon"]&\mathbb{Z}\rar & 0.
\end{tikzcd}
\]
Therefore from the resolution of $\tilde{\mathbb{Z}}$ we can see that there are isomorphisms $H^i(Q;\tilde{V})\cong H^{i+1}(Q;V)$ and $H_i(Q;\tilde{V})\cong H_{i+1}(Q;V)$ for $i\geq 1$. This gives us four potentially non-zero values for $(\hm_{hQ})_\star$ and four for $\hm^{hQ}_\star$, as depicted in Figures \ref{picture_homotopyorbits} and \ref{picture_homotopy fixed points}.
\begin{figure}[ht]
\begin{minipage}[t]{0.4\textwidth}
\noindent
\begin{tikzpicture}[scale=0.6]
\draw[->, thick] (0,5)--(10,5) ;
\draw[->, thick] (5,0)--(5,10) ;
\draw[help lines, thin, lgray] (0,0) grid (10,10);
\foreach \x in {0,2,4}
%Grube kropki
\node[circle,fill,scale=0.6] at (5-\x,5+\x) {};
\foreach \x in {2,4}
\node[circle,fill,scale=0.6] at (5+\x,5-\x) {};

%Diamenty
\foreach \x in {1,3,5}
\node[diamond,fill,scale=0.6] at (5-\x,5+\x) {};
\foreach \x in {1,3,5}
\node[diamond,fill,scale=0.6] at (5+\x,5-\x) {};

%Chude pełne kropki
\foreach \x in {0,...,10}
\node[circle,scale=0.4,fill] at (10,\x) {};
\foreach \x in {2,...,10}
\node[circle,scale=0.4,fill] at (8,\x) {};
\foreach \x in {4,...,10}
\node[circle,scale=0.4,fill] at (6,\x) {};
\foreach \x in {6,...,10}
\node[circle,scale=0.4,fill] at (4,\x) {};
\foreach \x in {8,...,10}
\node[circle,scale=0.4,fill] at (2,\x) {};

%Puste kropki
\foreach \x in {9,10}
\draw (1,\x) circle (3pt);
\foreach \x in {8,...,10}
\draw (3,\x) circle (3pt);
\foreach \x in {6,...,10}
\draw (5,\x) circle (3pt);
\foreach \x in {4,...,10}
\draw (7,\x) circle (3pt);
\foreach \x in {2,...,10}
\draw (9,\x) circle (3pt);

\node[above left] at (10,5) {$x\cdot 1$};
\node[below right] at (5,10) {$y\cdot \sigma$};

\node[scale=1.5] at (5,11) {$(H\underline{M}_{hQ})_{x+y\sigma}$};

%Legenda
\node at (0.5,-1) {Key:};
\node[circle, fill, scale=0.6] at (2,-1) {};
\node at (4,-1) {$V/(1-\gamma)V$};
\node[diamond, fill, scale=0.6] at (2,-2) {};
\node at (4,-2) {$V/(1+\gamma)V$}; 
\node[circle, fill,scale=0.4] at (6.4,-1) {};
\node at (8.5,-1) {$V^Q/(1+\gamma)V$};
\draw (6.4,-2) circle (3pt);
\node at (8.5,-2) {$\prescript{}{N}V/(1-\gamma)V$};
\end{tikzpicture}
\caption{Coefficients of $\hm_h$}
\label{picture_homotopyorbits}
\end{minipage}
\hfill
\begin{minipage}[t]{0.4\textwidth}

\noindent
\begin{tikzpicture}[scale=0.6]
\draw[->, thick] (0,5)--(10,5) ;
\draw[->, thick] (5,0)--(5,10) ;
\draw[help lines, thin, lgray] (0,0) grid (10,10);
%Grube kropki
\foreach \x in {0,2,4}
\node[circle,fill,scale=0.6] at (5-\x,5+\x) {};
\foreach \x in {2,4}
\node[circle,fill,scale=0.6] at (5+\x,5-\x) {};

%Diamenciki
\foreach \x in {1,3,5}
\node[diamond,fill,scale=0.6] at (5-\x,5+\x) {};
\foreach \x in {1,3,5}
\node[diamond,fill,scale=0.6] at (5+\x,5-\x) {};

%Chude pełne kropki
\foreach \x in {0,...,8}
\node[circle,scale=0.4,fill] at (1,\x) {};
\foreach \x in {0,...,6}
\node[circle,scale=0.4,fill] at (3,\x) {};
\foreach \x in {0,...,4}
\node[circle,scale=0.4,fill] at (5,\x) {};
\foreach \x in {0,...,2}
\node[circle,scale=0.4,fill] at (7,\x) {};
\node[circle,scale=0.4,fill] at (9,0) {};

%Puste kropki
\foreach \x in {0,...,9}
\draw (0,\x) circle (3pt);
\foreach \x in {0,...,7}
\draw (2,\x) circle (3pt);
\foreach \x in {0,...,5}
\draw (4,\x) circle (3pt);
\foreach \x in {0,...,3}
\draw (6,\x) circle (3pt);
\foreach \x in {0,1}
\draw (8,\x) circle (3pt);

\node[above left] at (10,5) {$x\cdot 1$};
\node[below right] at (5,10) {$y\cdot \sigma$};

\node[scale=1.5] at (5,11) {$H\underline{M}^{hQ}_{x+y\sigma}$};

%Legenda
\node at (0.5,-1) {Key:};
\node[circle, fill, scale=0.6] at (2,-1) {};
\node at (3,-1) {$V^Q$};
\node[circle, fill,scale=0.4] at (5,-1) {};
\node at (7.2,-1) {$V^Q/(1+\gamma)V$};
\node[diamond, fill, scale=0.6] at (2,-2) {};
\node at (3,-2) {$\prescript{}{N}V$}; 
\draw (5,-2) circle (3pt);
\node at (7.2,-2) {$\prescript{}{N}V/(1-\gamma)V$};

\end{tikzpicture}
\caption{Coefficients of $\hm^h$}
\label{picture_homotopy fixed points}
\end{minipage}
\end{figure}

\begin{rem}
Note that from Figures \ref{picture_homotopyorbits} and \ref{picture_homotopy fixed points} it is easy to see that both $\ho{\hm}{\star}$ and $\hm^{hQ}_\star$ are $(2-2\sigma)$-periodic. We will attribute this phenomenon to the multiplication by the generator of $\ha^Q_{2-2\sigma}$ in Section \ref{sec multiplication by u}. 
\end{rem}
Before proceeding to the description of $\hm^{tQ}_\star$ we give a couple of observations on the structure of $\hm_h$ and $\hm^h$. 

\begin{observation}
\label{obs multiplication by a on ho and hfp}
We have that
\[
\pi^e_\ast(\hm_h)\cong\pi^e_\ast(\hm^h)\cong\pi^e_\ast(\hm).
\]
In particular, the multiplication by $a$ in $\left(\hm_{hQ}\right)_\star$ and $\hm^{hQ}_\star$ can be described in the same way as in Lemma \ref{lemma multiplication by a}.
\end{observation}
\begin{proof}
We are going to prove the observation only for $\hm_h$, as the proof for $\hm^h$ follows analogously.
Note that $\res^Q_e(EQ_+)\simeq S^0$. By the definition of non-equivariant homotopy groups we have that
\begin{align*}
\pi^e_n(\hm_h)&=[Q_+\wedge S^n,EQ_+\wedge\hm]^Q\\
&\cong [S^n,\res^Q_e(EQ_+\wedge \hm)]\\
&\cong [S^n, \res^Q_e(EQ_+)\wedge \res^Q_e\hm]\\
&\cong [S^n, S^0\wedge\res^Q_e]=\pi^e_n(\hm).
\end{align*}
Here the second isomorphism comes from the fact that the restriction functor is symmetric monoidal. The statement about the multiplication by $a$ follows now by the verbatim repetition of the proof of Lemma \ref{lemma multiplication by a}.
\end{proof}
\begin{prop}
\label{prop epsilon_0 f_0}
The map $f_0\colon V_Q=\ho{\hm}{0}\to\hm^Q_0=\M(Q/Q)$ is the map induced by $\trm$ on $V_Q$. The map $\epsilon_0\colon \M(Q/Q)=\hm^Q_0\to\hm^{hQ}_0=V^Q$ is the restriction of $\resm$ to the codomain $V^Q$. 
\end{prop}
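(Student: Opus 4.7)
The plan is to trace the augmentation $EQ_+ \to S^0$ through the $0$-skeleton of the filtration \eqref{filtration}. Note that $S(\sigma) = Q$, so the $0$-skeleton gives an inclusion $Q_+ \hookrightarrow EQ_+$ whose composition with the augmentation is the standard collapse $Q_+ \to S^0$.

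For the identification of $f_0$: smashing with $\hm$ yields a factorisation $Q_+ \wedge \hm \to EQ_+ \wedge \hm \xrightarrow{f} \hm$. Applying $\pi^Q_0$, the induction-restriction adjunction identifies $\pi^Q_0(Q_+ \wedge \hm) \cong V$, and the composite $V \to \M(Q/Q)$ is exactly the map induced by the collapse $Q_+ \to S^0$, which by construction of the equivariant transfer is $\trm$. On the other hand, the homotopy orbits spectral sequence (which collapses at $E^2$ by Proposition \ref{prop coefficients of hfp and ho}) identifies the map induced by the $0$-skeleton inclusion on $\pi^Q_0$ with the edge map $V = H_0(Q; V)_{\text{cellular}} \twoheadrightarrow V_Q$, i.e.\ the canonical quotient. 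Since $\trm$ factors through $V_Q$ by the Mackey relation $\trm(\gamma v) = \trm(v)$, the factorisation must coincide with $f_0$.

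For $\epsilon_0$: apply $F(-,\hm)$ to the same augmentation, producing the factorisation $\hm \xrightarrow{\epsilon} F(EQ_+,\hm) \to F(Q_+,\hm)$. Taking $\pi^Q_0$ and using $\pi^Q_0(F(Q_+,\hm)) = \hm^e_0 = V$, the composite map $\M(Q/Q) \to V$ is the one induced by $Q_+ \to S^0$ via Hom, which is precisely $\resm$. The homotopy fixed points spectral sequence identifies the map $\pi^Q_0(F(EQ_+,\hm)) = V^Q \to V = \pi^Q_0(F(Q_+,\hm))$ with the canonical inclusion. Since $\resm$ lands in $V^Q$ (by the Mackey relation $\gamma \cdot \resm(m) = \resm(m)$), this forces $\epsilon_0$ to be the corestriction of $\resm$ to $V^Q$.

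The main point needing care is the identification of the spectral sequence edge maps (the quotient $V \twoheadrightarrow V_Q$ for homotopy orbits and the inclusion $V^Q \hookrightarrow V$ for homotopy fixed points) with the maps induced by the $0$-skeleton inclusion $Q_+ \hookrightarrow EQ_+$. This follows from the explicit description of the $E_1$-page in Proposition \ref{prop isomorphism from GM}: the $0$-skeleton corresponds to the initial term $\mathbb{Z}[Q]$ of the standard resolution of $\mathbb{Z}$, and the edge map is then the augmentation of this resolution, which gives exactly the claimed quotient and inclusion after taking $\Hom_{\mathbb{Z}[Q]}(-,V)$ or $- \otimes_{\mathbb{Z}[Q]} V$.
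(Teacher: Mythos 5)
Your argument is essentially identical to the paper's: both proofs factor the collapse $Q_+\to S^0$ through the $0$-skeleton inclusion $Q_+\hookrightarrow EQ_+$, smash with $\hm$, apply $\pi^Q_0$, and identify the resulting map $V\to V_Q$ as the canonical projection (and dually for $\epsilon_0$). The only difference is that you spell out the edge-map identification via the $E_1$-page description of Proposition \ref{prop isomorphism from GM}, whereas the paper states the projection identification directly; this is a matter of exposition, not substance.
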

\begin{proof}
Let $i\colon Q_+\to EQ_+$ be the inclusion of the $0$-skeleton. Note that the map $Q_+\to S^0$ extends to the filtration \eqref{filtration} of $EQ_+$ and thus it factors as
\[
\begin{tikzcd}
Q_+\rar["i"] & EQ_+\rar & S^0. 
\end{tikzcd}
\]
By smashing this sequence with $\hm$ and taking $\pi^Q_0(-)$ we obtain the following commutative diagram:
\[
\begin{tikzcd}[column sep=1.5em]
\hm^e_0 \ar[rr, "\trm"]\ar[dr] && \hm^Q_0 \\
&\left(\hm_{hQ}\right)_0\ar[ur,"f_0"']&
\end{tikzcd}
\]
By Proposition \ref{prop coefficients of hfp and ho} this diagram is isomorphic to the following:
\[
\begin{tikzcd}[column sep=small]
V \ar[rr, "\trm"]\ar[dr] && \M (Q/Q) \\
&\sfrac{V}{(1-\gamma)V} \ar[ur,"f_0"']&
\end{tikzcd}
\]
The left diagonal arrow is the canonical projection. Thus the map $f_0\colon\ho{\hm}{0}\to\hm^Q_0$ is the map induced by the transfer on $V_Q$. Note that by the properties of Mackey functors $\trm$ always factors via $V_Q$. We obtain the second assertion by using the dual argument.
\end{proof}
We now give a corollary, which will become useful in Section \ref{sec multiplication by u}.
\begin{cor}
\label{cor fysigma for y>1}
If $y\geq 1$, then the map $f_{y\sigma}\colon\left(\hm_{hQ}\right)_{y\sigma}\to\hm^Q_{y\sigma}$ is induced by the transfer.
\end{cor}
\begin{proof}
By Lemma \ref{lemma multiplication by a} and Observation \ref{obs multiplication by a on ho and hfp} we need to prove only the case $y=1$.

Consider the following commutative diagram:
\[
\begin{tikzcd}
\left(\hm_{hQ}\right)_\sigma\dar["a"]\rar["f_\sigma"]&\hm^Q_\sigma\dar["a"]\\
\left(\hm_{hQ}\right)_0\rar["f_0"]&\hm^Q_0.
\end{tikzcd}
\]

From Lemma \ref{lemma multiplication by a} and Observation \ref{obs multiplication by a on ho and hfp} we get that the vertical arrows in this diagram are inclusions. By Propositions \ref{prop groups hmq for x=0} and \ref{prop coefficients of hfp and ho} it is isomorphic to the following diagram:
\[
\begin{tikzcd}
\frac{\prescript{}{N}V}{(1-\gamma)V}\dar["a"]\rar["f_\sigma"]&\ker(\resm)\dar["a"]\\
V_Q\rar["f_0"]&M(Q/Q).
\end{tikzcd}
\]

Since the map $f_0$ is induced by the transfer in $\M$ by Proposition \ref{prop epsilon_0 f_0}, the map $f_\sigma$ is also induced by the transfer. The statement follows.
\end{proof}
%
%Proposition - coefficients of Tate
%
\begin{prop}
\label{prop coefficients of Tate}
The coefficients of the $Q$-spectrum $\hm^t$ are given by
\[
\hm^{tQ}_{x+y\sigma}=\hat{H}^{-x}(Q;V),
\]
where $\hat{H}^x(Q;V)$ denotes the $x$-th Tate cohomology of $Q$ with coefficients in $V$ (see Section \ref{ssection Notation and conventions}). Note that since $\hm^t$ is $a$-periodic, $\hm^{tQ}_{x+y\sigma}$ does not depend on $y$.
\end{prop}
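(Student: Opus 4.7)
The plan is to carry out the step of the computational scheme outlined at the end of Section \ref{sec The Tate method} that deduces $\hm^{tQ}_\star$ from $\hm^{hQ}_\star$ via Lemma \ref{lemma_a periodicity of geometric fp and tate}. Since $\hm^t=\hm^h\wedge\widetilde{EQ}$, that lemma gives
\[
\hm^{tQ}_\star \;\cong\; a^{-1}\hm^{hQ}_\star,
\]
and hence $\hm^{tQ}$ is $a$-periodic, so $\hm^{tQ}_{x+y\sigma}$ is independent of $y$. It therefore suffices to identify $\hm^{tQ}_x$ with $\hat{H}^{-x}(Q;V)$ for every $x\in\mathbb{Z}$, via the colimit
\[
\hm^{tQ}_x \;=\; \operatorname{colim}\bigl(\hm^{hQ}_x \xrightarrow{a} \hm^{hQ}_{x-\sigma} \xrightarrow{a} \hm^{hQ}_{x-2\sigma} \xrightarrow{a} \cdots\bigr).
\]

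Next I would verify that the connecting maps are eventually isomorphisms, so that the colimit is represented by any late enough term. Smashing the cofibre sequence $Q_+\to S^0\to S^\sigma$ with $\hm^h$ and applying $\pi^Q_{x+y\sigma}$ produces a long exact sequence with flanking terms of the form $\pi^Q_{x+y\sigma}(Q_+\wedge\hm^h)\cong \pi_{x+y}(\hm^h)$ (by the induction--restriction adjunction, using that $EQ_+$ is nonequivariantly contractible, so $\hm^h$ restricts to $\hm$ underlying). Since $\pi_*(\hm)$ is concentrated in degree $0$ with value $V$, the map $a\colon\hm^{hQ}_{x+y\sigma}\to\hm^{hQ}_{x+(y-1)\sigma}$ is an isomorphism whenever $x+y\notin\{0,1\}$. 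In particular all connecting maps for $n\geq x+2$ are isomorphisms, and the colimit agrees with $\hm^{hQ}_{x-n\sigma}$ for any sufficiently large $n$.

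Finally I would identify this stable value with $\hat{H}^{-x}(Q;V)$. By Proposition \ref{prop coefficients of hfp and ho}, for $n\geq x$ even the group $\hm^{hQ}_{x-n\sigma}$ equals $H^{n-x}(Q;V)$, while for $n$ odd it is $H^{n-x}(Q;\tilde V)$, which by the projective resolution of $\tilde{\mathbb{Z}}$ recorded after Proposition \ref{prop coefficients of hfp and ho} becomes $H^{n-x+1}(Q;V)$ once $n-x\geq 1$. In both parity cases the exponent is a positive integer, so ordinary cohomology coincides with Tate cohomology, and $2$-periodicity of $\hat{H}^*(Q;V)$ reduces each of these to $\hat{H}^{-x}(Q;V)$. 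The principal obstacle is just this bookkeeping: one has to track the sign twist carefully across parities of $n$ to confirm that both parity classes land on the same Tate group. Everything else is a formal consequence of Lemma \ref{lemma_a periodicity of geometric fp and tate} together with the collapse of the homotopy fixed points spectral sequence in Proposition \ref{prop coefficients of hfp and ho}.
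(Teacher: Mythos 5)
Your proof is correct and follows the same route as the paper: both start from Lemma \ref{lemma_a periodicity of geometric fp and tate} to identify $\hm^{tQ}_\star$ with the localization $a^{-1}\hm^{hQ}_\star$, and then read the stable value off Proposition \ref{prop coefficients of hfp and ho}. The paper's proof is terser (it simply asserts that the localization agrees with $\hm^{hQ}_\star$ below the antidiagonal, evaluates at $x-(x+1)\sigma$, and splits into the even/odd cases), whereas you explicitly verify via the cofibre sequence $Q_+\to S^0\to S^\sigma$ smashed with $\hm^h$ that multiplication by $a$ is an isomorphism on $\hm^{hQ}_{x+y\sigma}$ whenever $x+y\notin\{0,1\}$, and you track the $\tilde V$-twist across parities; these are useful extra details but not a different method.
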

\begin{proof}
By definition $X^t=X^h\wedge\widetilde{EQ}$. So by Lemma \ref{lemma_a periodicity of geometric fp and tate} we have that
\[
\hm^{tQ}_\star= a^{-1}\hm^{hQ}_\star.
\]
From this isomorphism we see in particular that $\hm^{tQ}_{x+y\sigma}=\hm^{hQ}_{x+y\sigma}$ below the antidiagonal, i.e., when $y<-x$. So if $x$ is even we obtain by Proposition \ref{prop coefficients of hfp and ho} and $a$-periodicity of $\hm^t$ that
\[
\hm^{tQ}_{x+y\sigma}\cong\hm^{tQ}_{x-(x+1)\sigma}=\hm^{hQ}_{x-(x+1)\sigma}\cong V^Q/NV.
\]
Analogously, if $x$ is odd we obtain that $\hm^{tQ}_{x+y\sigma}\cong\prescript{}{N}V/(1-\gamma)V$.
\end{proof}
\section{Geometric fixed points}
\label{sec gfp}
In this section we describe the coefficients of the $Q$-spectrum $\hm^\Phi$.
\begin{theorem}
\label{thm coefficients of geometric fixed points}
The coefficients of $\hm^\Phi$ are given by:
\[
\hm^{\Phi Q}_{x+y\sigma}=
\begin{cases}
\coker(\trm)&\textrm{if } x=0 \\
\left(\ker(\trm)\right)_Q&\textrm{if } x=1 \\
\hm^{tQ}_{x+y\sigma}=\hat{H}^x(Q;V)&\textrm{if } x\geq 2 \\
0&\textrm{otherwise}.
\end{cases}
\]
\end{theorem}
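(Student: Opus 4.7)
The plan is to apply $\pi_\ast^Q$ to the top cofibre sequence $\hm_h \xrightarrow{f} \hm \xrightarrow{g} \hm^\Phi$ from the Tate diagram. By Lemma~\ref{lemma_a periodicity of geometric fp and tate} the spectrum $\hm^\Phi$ is $a$-periodic, so $\hm^{\Phi Q}_{x+y\sigma}$ is independent of $y$; hence it suffices to compute $\hm^{\Phi Q}_n$ for $n \in \mathbb{Z}$. The resulting long exact sequence
\[
\cdots \to (\hm_h)^Q_n \xrightarrow{f_\ast} \hm^Q_n \xrightarrow{g_\ast} \hm^{\Phi Q}_n \xrightarrow{\delta} (\hm_h)^Q_{n-1} \to \cdots
\]
will be combined with three inputs: that $\hm^Q_n$ vanishes for $n \neq 0$ and equals $\M(Q/Q)$ for $n = 0$; Proposition~\ref{prop coefficients of hfp and ho} identifying $(\hm_h)^Q_n$ with $H_n(Q;V)$, which vanishes for $n<0$, equals $V_Q$ for $n=0$, and equals $\hat{H}^{n+1}(Q;V)$ for $n \geq 1$ via the standard periodic resolution of $\mathbb{Z}$ over $\mathbb{Z}[Q]$; and Proposition~\ref{prop epsilon_0 f_0} identifying the map $f_\ast \colon V_Q \to \M(Q/Q)$ with the factored transfer $\bar{\trm}$.

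For $n \geq 2$ both $\hm^Q_n$ and $\hm^Q_{n-1}$ vanish, so the long exact sequence collapses to an isomorphism $\hm^{\Phi Q}_n \cong (\hm_h)^Q_{n-1} \cong \hat{H}^n(Q;V)$ after invoking the $2$-periodicity of Tate cohomology; this coincides with $\hm^{tQ}_n$ by Proposition~\ref{prop coefficients of Tate}, reflecting the fact that the common fibre of $\epsilon$ and $\epsilon_t$ has vanishing $\pi_\ast^Q$ in this range. For $n \leq -1$ both $(\hm_h)^Q_n$ and $(\hm_h)^Q_{n-1}$ vanish, giving $\hm^{\Phi Q}_n = 0$.

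The substantive part is the range $n \in \{0,1\}$. There the relevant piece of the long exact sequence specialises to
\[
0 \to \hm^{\Phi Q}_1 \to V_Q \xrightarrow{\bar{\trm}} \M(Q/Q) \to \hm^{\Phi Q}_0 \to 0,
\]
from which $\hm^{\Phi Q}_0 = \coker(\trm)$ and $\hm^{\Phi Q}_1 = \ker(\bar{\trm}) = (\ker\trm)_Q$ may be read off directly. The main obstacle I anticipate is verifying that the middle map is indeed $\bar{\trm}$: this is exactly the content of Proposition~\ref{prop epsilon_0 f_0} applied to $(\hm_h)^Q_0 \to \hm^Q_0$, and without it the algebraic identification in degrees $0$ and $1$ would be inaccessible.
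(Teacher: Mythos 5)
Your argument follows the paper's strategy: apply $\pi^Q_\ast$ to the top cofibre sequence $\hm_h \to \hm \to \hm^\Phi$, use the $a$-periodicity of $\hm^\Phi$ from Lemma~\ref{lemma_a periodicity of geometric fp and tate} to reduce to integer degrees, and then read the answer off the long exact sequence using the identification of $f_0$ from Proposition~\ref{prop epsilon_0 f_0}. Your treatment of the ranges $n\leq -1$ and $n\in\{0,1\}$ is the paper's Lemmas~\ref{lemma gfp 3} and~\ref{lemma gfp1}.

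The one genuine variation is in the range $n\geq 2$. You read $\hm^{\Phi Q}_n \cong \ho{\hm}{n-1}$ directly off the same top-row long exact sequence and then match this with $\hat{H}^n(Q;V) = \hm^{tQ}_n$ by a separate group-homological identification. The paper's Lemma~\ref{lemma gfp iso to tate} instead passes to the common fibre $F = F(\widetilde{EQ},\hm)$ of $\epsilon\colon\hm\to\hm^h$ and $\epsilon_t\colon\hm^\Phi\to\hm^t$, shows $F^Q_m=0$ for $m\geq 1$ from the first fibration, and deduces from the second that $\epsilon_{t\ast}$ is an isomorphism for $m\geq 2$. Your route is slightly shorter, but produces only an abstract isomorphism of abelian groups; the paper's route establishes that the natural comparison map \emph{is} the isomorphism, a fact that gets reused in the proof of Lemma~\ref{lemma values sigma-1} to identify $\hm^Q_{y\sigma-1}$.

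One step you assert without argument --- as does the paper's Lemma~\ref{lemma gfp1} --- deserves scrutiny: the identification $\ker(\bar{\trm}) = (\ker\trm)_Q$. What the exact sequence gives is that $\hm^{\Phi Q}_1$ is the kernel of $\bar{\trm}\colon V_Q\to\M(Q/Q)$; since $(1-\gamma)V\subseteq\ker(\trm)$, this kernel is $\ker(\trm)/(1-\gamma)V$. The notation $(\ker\trm)_Q$ means $\ker(\trm)/(1-\gamma)\ker(\trm)$, which surjects onto $\ker(\trm)/(1-\gamma)V$ but need not equal it: they coincide precisely when $(1-\gamma)V = (1-\gamma)\ker(\trm)$. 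For instance, for the represented Mackey functor $\M$ with $V=\mathbb{Z}[Q]$ and $\trm$ the augmentation, one has $\ker(\trm)/(1-\gamma)V = 0$ (as it must, since $\hm^\Phi\simeq\ast$ here) while $(\ker\trm)_Q \cong \mathbb{Z}/2$. So the step needs justification, or the degree-$1$ answer should be recorded as $\ker(\trm)/(1-\gamma)V$.
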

Note that the values of $\hm^{\Phi Q}_\star$ depend only on the fixed degree, which comes from Lemma \ref{lemma_a periodicity of geometric fp and tate}. We prove Theorem \ref{thm coefficients of geometric fixed points} by a series of lemmas.
%
%Lemma - 0,1 of geometric fixed points
%
\begin{lemma}
\label{lemma gfp1}
$\hm^{\Phi Q}_{y\sigma}=
\coker(\trm)$ and $\hm^{\Phi Q}_{1+y\sigma}=
\left(\ker(\trm)\right)_Q$.
\end{lemma}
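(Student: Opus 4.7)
The plan is to extract $\hm^{\Phi Q}_0$ and $\hm^{\Phi Q}_1$ from the long exact sequence in $\pi^Q_*$ associated to the top cofibre sequence $\hm_h\xrightarrow{f}\hm\xrightarrow{g}\hm^\Phi$ of the Tate diagram, and then propagate these computations to all twisted degrees via the $a$-periodicity of $\hm^\Phi$.

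The relevant portion of the long exact sequence is
\[
\ho{\hm}{1}\xrightarrow{f_1}\hm^Q_1\to\hm^{\Phi Q}_1\to\ho{\hm}{0}\xrightarrow{f_0}\hm^Q_0\to\hm^{\Phi Q}_0\to\ho{\hm}{-1}.
\]
Proposition \ref{prop coefficients of hfp and ho} supplies the identifications $\ho{\hm}{0}\cong V_Q$ and $\ho{\hm}{-1}\cong H_{-1}(Q;V)=0$. The middle term $\hm^Q_1$ vanishes by the introductory fact that $\hm^Q_n=0$ for $n\in\mathbb{Z}\setminus\{0\}$, and $\hm^Q_0=\M(Q/Q)$ by definition. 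Most importantly, Proposition \ref{prop epsilon_0 f_0} identifies the boundary map $f_0$ with the induced transfer $\bar{\trm}\colon V_Q\to\M(Q/Q)$.

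Substituting these identifications, the sequence collapses to
\[
0\to\hm^{\Phi Q}_1\to V_Q\xrightarrow{\bar{\trm}}\M(Q/Q)\to\hm^{\Phi Q}_0\to 0,
\]
so $\hm^{\Phi Q}_0\cong\coker(\bar{\trm})=\coker(\trm)$ and $\hm^{\Phi Q}_1\cong\ker(\bar{\trm})$. The Mackey identity $\trm(\gamma v)=\trm(v)$ forces $(1-\gamma)V\subseteq\ker(\trm)$, so $\ker(\bar{\trm})$ is precisely the image of $\ker(\trm)$ inside $V_Q$, that is, the quotient $\ker(\trm)/(1-\gamma)V$, which under the conventions in force realises $(\ker(\trm))_Q$ as a subgroup of $V_Q$. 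Finally, Lemma \ref{lemma_a periodicity of geometric fp and tate} ensures that $\hm^{\Phi Q}_\star$ is $a$-periodic, so the values computed in integer degrees $0$ and $1$ persist along the lines $x=0$ and $x=1$ respectively, giving the full statement.

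The main obstacle is the correct identification of the map $f_0$ with $\bar{\trm}$ together with the bookkeeping that promotes $\ker(\bar{\trm})$ to $(\ker(\trm))_Q$; both have been handled by earlier propositions (in particular, Proposition \ref{prop epsilon_0 f_0} and the Weyl-invariance of the transfer), so the remainder of the argument is a matter of assembling these pieces inside the long exact sequence.
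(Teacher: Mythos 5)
Your proof is essentially identical to the paper's: both extract the same segment of the long exact sequence from the top cofibre sequence of the Tate diagram, use Proposition \ref{prop epsilon_0 f_0} to identify $f_0$ with the transfer induced on $V_Q$, observe that the outer terms $\hm^Q_1$ and $\ho{\hm}{-1}$ vanish, and invoke $a$-periodicity from Lemma \ref{lemma_a periodicity of geometric fp and tate} to extend to all twisted degrees. The delicate bookkeeping step you flag at the end --- passing from $\ker(\trm)/(1-\gamma)V$ to the notation $(\ker(\trm))_Q$ --- is asserted in the same way in the paper's own proof, so your argument matches it in that respect as well.
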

\begin{proof}
By Lemma \ref{lemma_a periodicity of geometric fp and tate} we have that $\hm^{\Phi Q}_{y\sigma}\cong\hm^{\Phi Q}_0$, so we need to prove the claim only in this case. Analogously for $x=1$.
From the top row of the Tate diagram we obtain the following exact sequence:
\[
\begin{tikzcd}
\hm^Q_1\rar["g_1"]&\hm^{\Phi Q}_1\rar &\ho{\hm}{0}\rar["f_0"]&\hm^{Q}_0\rar["g_0"] &\hm^{\Phi Q}_0\rar &\ho{\hm}{-1}
\end{tikzcd}
\]
The outer groups in this sequence are zero: $\hm^Q_1$ by the definition of an Eilenberg-MacLane spectrum and $\ho{\hm}{-1}$ by Proposition \ref{prop coefficients of hfp and ho}. Thus $\hm^{\Phi Q}_{0}$ and $\hm^{\Phi Q}_1$ are respectively cokernel and kernel of the map $f_0$. Thus by Proposition \ref{prop epsilon_0 f_0} we obtain that $\coker(f_0)=\coker(\trm )$ and $\ker(f_0)=\ker(\trm)_Q$.
\end{proof}
%
%Lemma - geometric fixed points, x>2
%
\begin{lemma}
\label{lemma gfp iso to tate}
If $x\geq 2$ then $\hm^{\Phi Q}_{x+y\sigma}=\hm^{tQ}_{x+y\sigma}$.
\end{lemma}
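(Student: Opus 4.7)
The plan is to reduce to integer degree using the $a$-periodicity of $\hm^\Phi$ and $\hm^t$, and then apply the five lemma to the ladder of long exact sequences induced by the two rows of the Tate diagram.

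By Lemma \ref{lemma_a periodicity of geometric fp and tate}, multiplication by $a$ is an isomorphism on both $\hm^{\Phi Q}_\star$ and $\hm^{tQ}_\star$, and the comparison map $\epsilon_{t*}$ commutes with it since $\epsilon_t$ is a map of $Q$-spectra. Hence it suffices to prove that $\epsilon_{t*}\colon \hm^{\Phi Q}_x \to \hm^{tQ}_x$ is an isomorphism for each integer $x \geq 2$; the statement for arbitrary $y$ then follows by $a$-periodicity.

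Applying $\pi^Q_\ast$ to the two rows of the Tate diagram yields a commutative ladder of long exact sequences
\[
\begin{tikzcd}[column sep=small]
\ho{\hm}{x} \rar \dar["="] & \hm^Q_x \rar \dar["\epsilon_*"] & \hm^{\Phi Q}_x \rar \dar["\epsilon_{t*}"] & \ho{\hm}{x-1} \rar \dar["="] & \hm^Q_{x-1} \dar["\epsilon_*"]  \\
\ho{\hm}{x} \rar & \hm^{hQ}_x \rar & \hm^{tQ}_x \rar & \ho{\hm}{x-1} \rar & \hm^{hQ}_{x-1}.
\end{tikzcd}
\]
For integer $x \geq 1$, the group $\hm^Q_x$ vanishes because $\hm$ is the Eilenberg--MacLane spectrum of a Mackey functor and so has trivial integer-graded homotopy in positive degrees. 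Likewise, by Proposition \ref{prop coefficients of hfp and ho} applied with $y=0$, we have $\hm^{hQ}_x = H^{-x}(Q;V) = 0$ for $x \geq 1$, since group cohomology vanishes in negative degrees. In particular, for $x \geq 2$, all four of $\hm^Q_x$, $\hm^Q_{x-1}$, $\hm^{hQ}_x$, $\hm^{hQ}_{x-1}$ are zero.

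The five lemma then gives the desired isomorphism at the central column: the two outermost comparison maps are identities and the two columns adjacent to the central one are trivially isomorphisms between zero groups. I do not anticipate any substantive obstacle; the only point to check carefully is the compatibility of $\epsilon_{t*}$ with multiplication by $a$ so that the reduction to $y=0$ is legitimate, and this is immediate from the fact that $a\in\pi^Q_{-\sigma}(S^0)$ acts through a map of $Q$-spectra with which $\epsilon_t$ commutes.
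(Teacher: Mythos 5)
Your proof is correct. The reduction to twisted degree zero via $a$-periodicity is legitimate, and the five-lemma argument on the ladder of long exact sequences works: for $x\geq 2$ the four corners $\hm^Q_x$, $\hm^Q_{x-1}$, $\hm^{hQ}_x$, $\hm^{hQ}_{x-1}$ vanish (the first two by the definition of an Eilenberg-MacLane spectrum, the last two because $H^{-x}(Q;V)=0$ in negative cohomological degree), and the outer vertical maps $\ho{\hm}{x}\to\ho{\hm}{x}$ are isomorphisms since the left-hand vertical map in the Tate diagram is an equivalence.

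Your route is genuinely different from the paper's, though the underlying vanishing inputs are the same. The paper works with the long exact sequences of the \emph{vertical} maps in the Tate square: it identifies the fibres of $\epsilon\colon\hm\to\hm^h$ and $\epsilon_t\colon\hm^\Phi\to\hm^t$ as the same spectrum $F=F(\widetilde{EQ},\hm)$, shows $F^Q_m=0$ for $m\geq 1$ from the fibration $F\to\hm\to\hm^h$, and then feeds this back into the fibration $F\to\hm^\Phi\to\hm^t$. You instead run the long exact sequences of the \emph{horizontal} maps (the two rows of the Tate diagram) and compare them by the five lemma. Since the Tate square is a homotopy pullback, these are dual perspectives on the same diagram and of essentially the same difficulty. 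The paper's version has the mild advantage of explicitly exhibiting the common fibre $F$, which is reused in the proof of Theorem \ref{thm multiplication by u} (Point 6) to identify the map $\zeta$; your version is more purely diagrammatic and avoids needing to know the explicit model of the fibre.
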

\begin{proof}
Since both $\hm^\Phi$ and $\hm^t$ are $a$-periodic by Lemma \ref{lemma_a periodicity of geometric fp and tate}, we need only to show that if $x\geq 2$, then $\hm^{\Phi Q}_x=\hm^{tQ}_x$ (with twisted degree $0$).
Note that the fibres of $\epsilon\colon\hm\to\hm^h$ and $\epsilon_t\colon\hm^\phi\to\hm^t$ are equivalent, as they are both of the form $F(\widetilde{EQ},\hm)$. Let $F$ denotes this fibre. By applying the long exact sequence in homotopy to the fibration $F\to\hm\to\hm^h$ we get:
\[
\begin{tikzcd}
\ldots\rar&\hm^Q_{m+1}\rar["\epsilon_\ast"]&\hm^{hQ}_{m+1}\rar &F^Q_m\rar&\hm^Q_m\rar["\epsilon_\ast"]&\hm^{hQ}_m\rar&\ldots.
\end{tikzcd}
\] 
But by the definition of Eilenberg-MacLane spectrum we have that $\hm^Q_m=0$ if $m\neq 0$ and $\hm^{hQ}_m=0$ for $m\geq 1$ by Proposition \ref{prop coefficients of hfp and ho}, so we get from this exact sequence that $F^Q_m=0$ for $m\geq 1$. By applying the analogous long exact sequence to the fibration $F\to \hm^\Phi\to\hm^t$ we get that $\hm^{\Phi Q}_m=\hm^{tQ}_m$ for $m\geq 2$.
\end{proof}
%
%Lemma - geometric fixed points, x<0
%
\begin{lemma}
\label{lemma gfp 3}
$\hm^{\Phi Q}_{x+y\sigma}=0$ for $x<0$.
\end{lemma}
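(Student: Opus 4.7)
The plan is to mimic the reduction already used in the previous lemmas: first collapse to integer grading using the $a$-periodicity of $\hm^\Phi$, and then read off the desired vanishing from a long exact sequence. By Lemma \ref{lemma_a periodicity of geometric fp and tate} we have $\hm^{\Phi Q}_{x+y\sigma}\cong\hm^{\Phi Q}_x$, so it suffices to establish $\hm^{\Phi Q}_x=0$ for integer degrees $x<0$ (that is, with twisted degree $0$).

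For the long exact sequence, rather than using the common fibre $F$ of $\epsilon$ and $\epsilon_t$ as in Lemma \ref{lemma gfp iso to tate}, I would work directly with the top cofibre sequence of the Tate diagram,
\[
\hm_h\to\hm\to\hm^\Phi.
\]
Applying $\pi^Q_\ast$ produces
\[
\cdots\to\hm^Q_n\to\hm^{\Phi Q}_n\to(\hm_h)^Q_{n-1}\to\hm^Q_{n-1}\to\cdots.
\]
For $n<0$ the term $\hm^Q_n$ vanishes because $\hm$ is concentrated in integer degree $0$. The crucial step is the vanishing of $(\hm_h)^Q_{n-1}$: by Proposition \ref{prop coefficients of hfp and ho}, in the column of integer gradings (i.e.\ $y=0$) the homotopy orbits read $(\hm_{hQ})_m=H_m(Q;V)$ for $m\geq 0$ and vanish otherwise. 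Since $n-1<0$ whenever $n<0$, this term is zero. The sequence then collapses to $0\to\hm^{\Phi Q}_n\to 0$, forcing $\hm^{\Phi Q}_n=0$.

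The argument is essentially a bookkeeping exercise and I do not anticipate any real obstacle. The only point requiring care is identifying the correct region of the homotopy orbits spectral sequence contributing to $(\hm_h)^Q_{n-1}$; once the $y=0$ column of Figure \ref{picture_homotopyorbits} is read off correctly, the conclusion is immediate. Note also that, in contrast to the $x\geq 2$ case of Lemma \ref{lemma gfp iso to tate}, here it is the homotopy orbits (not the homotopy fixed points) that give the sharpest vanishing statement, which is why the top cofibre sequence, rather than the comparison between $\epsilon$ and $\epsilon_t$, is the natural tool.
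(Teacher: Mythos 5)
Your proof is correct and follows essentially the same route as the paper: reduce to integer grading via $a$-periodicity of $\hm^\Phi$, write out the long exact sequence for the top cofibre sequence $\hm_h\to\hm\to\hm^\Phi$ of the Tate diagram, and conclude from the vanishing of $\hm^Q_m$ for $m\neq 0$ and of $\ho{\hm}{m}$ for $m<0$. Your remark that one should avoid the common fibre $F$ here and use the top row directly is exactly what the paper does as well, so there is no real divergence.
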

\begin{proof}
As above, by Lemma \ref{lemma_a periodicity of geometric fp and tate} we need only to show the statement for the twisted degree $y=0$. Writing the long exact sequence in homotopy for the upper row of the Tate diagram yields:
\[
\begin{tikzcd}
\ldots\rar & \ho{\hm}{m}\rar & \hm^Q_m\rar & \hm^{\Phi Q}_m\rar &\ho{\hm}{m-1}\rar & \hm^Q_{m-1}\rar &\ldots.
\end{tikzcd}
\]
But $\hm^Q_m=0$ for $m\neq 0$ and $\ho{\hm}{m}=0$ for $m<0$ by Proposition \ref{prop coefficients of hfp and ho}, so $\hm^{\Phi Q}_m=0$ for $m<0$.
\end{proof}
\begin{proof}[Proof of Theorem \ref{thm coefficients of geometric fixed points}]
Follows from Lemmas \ref{lemma gfp1}, \ref{lemma gfp iso to tate} and \ref{lemma gfp 3}.
\end{proof}
\section{$RO(Q)$-graded abelian group structure of $\hm^Q_\star$}
In this section we describe the structure of $\hm^Q_\star$ as a $RO(Q)$-graded abelian group. It is given by the following theorem:
%
%Theorem - RO(Q)-graded abelian group structure of \hm^Q_\star
%
\begin{theorem}
\label{Theorem RO(Q)graded abelian structure of hmq}
The $RO(Q)$-graded abelian group structure of $\hm^Q_\star$ is given by:
\begin{enumerate}
\item
\[
\hm^Q_{y\sigma}=
\begin{cases}
\ker(\resm)&\textrm{if }y>0 \\
\coker(\trm)&\textrm{if }y<0 \\
\M(Q/Q)&\textrm{if }y=0
\end{cases}
\]
\item \[
\hm^Q_{1+y\sigma}=
\begin{cases}
\ker(\trm)&\textrm{if }y=-1 \\
\ker(\trm)_Q&\textrm{if }y<-1 \\
0&\textrm{if }y>-1
\end{cases}
\]
\item 
\[
\hm^Q_{y\sigma-1}=
\begin{cases}
\coker(\resm)&\textrm{if }y=1 \\
V^Q/\im(\resm)&\textrm{if }y>1 \\
0&\textrm{if }y<1
\end{cases}
\]
\item if $x\geq 2$ then $\hm^Q_{x+y\sigma}=\hm^{hQ}_{x+y\sigma}$;
\item if $x\leq -2$ then $\hm^Q_{x+y\sigma}=\ho{\hm}{x+y\sigma}$.
\end{enumerate}
This data is presented in Figure \ref{Figure symbolic depiction}.
\end{theorem}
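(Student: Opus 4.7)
The plan is to derive each part of the theorem from the two long exact sequences coming from the rows of the Tate diagram, combined with the computations of $\hm^{\Phi Q}_\star$, $\hm^{tQ}_\star$, $\ho{\hm}{\star}$ and $\hm^{hQ}_\star$ established in Sections \ref{section hfp, ho and tate} and \ref{sec gfp}. Case (1) is precisely Proposition \ref{prop groups hmq for x=0}. Case (5), when $x \leq -2$, is immediate: in this range Theorem \ref{thm coefficients of geometric fixed points} forces both $\hm^{\Phi Q}_\alpha$ and $\hm^{\Phi Q}_{\alpha+1}$ to vanish, so the long exact sequence
\[
\hm^{\Phi Q}_{\alpha+1} \to \ho{\hm}{\alpha} \to \hm^Q_\alpha \to \hm^{\Phi Q}_\alpha
\]
arising from the top-row cofibre sequence $\hm_h \to \hm \to \hm^\Phi$ degenerates to the desired isomorphism $\ho{\hm}{\alpha} \xrightarrow{\cong} \hm^Q_\alpha$.

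Cases (2) and (3), at $x = \pm 1$, each split into three sub-cases. The vanishing sub-case ($y > -1$ in (2), $y < 1$ in (3)) reduces to $\hm^Q_{\pm 1} = 0$ by iterating the $a$-multiplication isomorphism of Lemma \ref{lemma multiplication by a}. The boundary sub-case $y = -1$ in (2) (respectively $y = 1$ in (3)) comes directly from the cofibre sequence $Q_+ \to S^0 \to S^\sigma$: smashing with $\hm$ and applying $\pi^Q_1$ identifies $\hm^Q_{1-\sigma}$ with $\ker(\trm)$, and the dual argument gives $\hm^Q_{\sigma-1} = \coker(\resm)$. The deep sub-cases $y < -1$ in (2) and $y > 1$ in (3) again use the top-row long exact sequence: the outer $\ho{\hm}{}$-terms vanish by Proposition \ref{prop coefficients of hfp and ho}, yielding $\hm^Q_\alpha \cong \hm^{\Phi Q}_\alpha$, which Theorem \ref{thm coefficients of geometric fixed points} identifies with $\ker(\trm)_Q$ and $V^Q/\im(\resm)$ respectively.

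Case (4), $x \geq 2$, is the most substantial, and I plan to handle it by a five-lemma comparison of the top-row long exact sequence with the bottom-row long exact sequence
\[
\hm^{tQ}_{\alpha+1} \to \ho{\hm}{\alpha} \to \hm^{hQ}_\alpha \to \hm^{tQ}_\alpha \to \ho{\hm}{\alpha-1},
\]
connected by the map of cofibre sequences induced by the Tate square. Of the five vertical maps, two are identities on $\ho{\hm}{}$, and the two columns $\hm^{\Phi Q}_? \to \hm^{tQ}_?$ at $\alpha$ and $\alpha+1$ are isomorphisms whenever $x \geq 2$ by Lemma \ref{lemma gfp iso to tate}. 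The five-lemma then forces the middle column $\epsilon_\ast \colon \hm^Q_\alpha \to \hm^{hQ}_\alpha$ to be an isomorphism as well. The subtlest step I foresee is in the boundary sub-cases of (2) and (3) at $y = \pm 1$, where the $Q_+ \to S^0 \to S^\sigma$ long exact sequence exhibits $\hm^Q_\alpha$ a priori only as an extension; pinning down that the connecting map $\M(Q/Q) \to V$ (respectively $V \to \M(Q/Q)$) is $\resm$ (respectively $\trm$) requires the explicit identification given in Proposition \ref{prop epsilon_0 f_0}.
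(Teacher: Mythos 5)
Your outline is right for parts (1), (2), (4) and (5), but the argument you give for the ``deep'' sub-case of part (3), $\hm^Q_{y\sigma-1}$ with $y>1$, is wrong, and the error is not cosmetic.

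You treat the $y<-1$ sub-case of (2) and the $y>1$ sub-case of (3) as symmetric, claiming that in both cases the outer $\ho{\hm}{}$-terms of the top-row long exact sequence vanish, giving $\hm^Q_\alpha\cong\hm^{\Phi Q}_\alpha$. This is true for $\alpha=1+y\sigma$ with $y<-1$ (fixed degree $+1$, and the two neighbouring homotopy-orbit groups do vanish by Proposition \ref{prop coefficients of hfp and ho}). It is false for $\alpha=y\sigma-1$ with $y>1$. There the fixed degree is $-1$, so $\hm^{\Phi Q}_{y\sigma-1}=0$ by Theorem \ref{thm coefficients of geometric fixed points}; in particular Theorem \ref{thm coefficients of geometric fixed points} does not identify it with $V^Q/\im(\resm)$ as you assert. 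Meanwhile $\ho{\hm}{y\sigma-1}=H_{y-1}(Q;H^y(S^{y\sigma},V))$ is generally nonzero for $y>1$, since the degree lies weakly above the antidiagonal. The top-row sequence therefore gives the exact piece
\[
\begin{tikzcd}
\hm^{\Phi Q}_{y\sigma}\rar & \ho{\hm}{y\sigma-1}\rar & \hm^Q_{y\sigma-1}\rar & 0,
\end{tikzcd}
\]
so $\hm^Q_{y\sigma-1}$ is a quotient of the (nonzero) homotopy orbits, not an isomorph of the geometric fixed points. To finish one must identify the cokernel of the connecting map: comparing with the bottom row of the Tate diagram shows that $\hm^{tQ}_{y\sigma}\to\ho{\hm}{y\sigma-1}$ is an isomorphism (the flanking $\hm^{hQ}$-groups vanish for $y>1$), so $\hm^Q_{y\sigma-1}\cong\coker\bigl(\epsilon_{t\ast}\colon\hm^{\Phi Q}_{y\sigma}\to\hm^{tQ}_{y\sigma}\bigr)$, and unwinding $\epsilon_{t\ast}$ as the map induced by $\resm$ yields $V^Q/\im(\resm)$. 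The dual phenomenon does not occur in part (2) because there it is the geometric fixed points, not the homotopy orbits, that persist. Your remaining arguments are fine: the five-lemma comparison of the two rows of the Tate diagram is a clean way to get (4), and the double vanishing of $\hm^{\Phi Q}$ for fixed degree $\leq -1$ gives (5) directly; the paper does not spell either of these out, so your treatment is a genuine addition.
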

%
%Figure - RO(Q) graded homotopy groups
%
\begin{figure}[ht]
\begin{tikzpicture}[scale=0.7]
%Kratka

\draw[help lines, thin, lgray] (0,0) grid (20,20);
%\draw[llgray, dashed] (0,20)--(20,0);
\draw[->, thick] (0,10)--(20,10) ;
\draw[->, thick] (10,0)--(10,20) ;
%Homologie
\draw[red] (0,20)--(8,12);
\draw[red] (8,12)--(8,20);
\fill[red!15!white] (0,20)--(8,12)--(8,20);

\node[scale=1.4] at (5.7,17.5) {$\ho{\hm}{x+y\sigma}$};

%Kohomologie
\draw[blue] (12,8)--(20,0);
\draw[blue] (12,8)--(12,0);
\fill[blue!15!white] (20,0)--(12,8)--(12,0);
\node[scale=1.4] at (15,2) {$\hm^{hQ}_{x+y\sigma}$};

\draw[BrickRed, line width=2pt] (9,11)--(9,20);
\node[circle,BrickRed,fill] at (9,11) {};
\node at (7.2,11) {$\coker(\res)$};

\draw[Brown, line width=2pt] (10,10)--(10,19.8);

\node[circle, Periwinkle,fill] at (11,9) {};
\draw[Periwinkle, line width=2pt] (11,0)--(11,9);
\draw[NavyBlue, line width=2pt] (10,0)--(10,10);
\node at (12.4,9) {$\ker(\tr)$};

\node[circle, draw, fill=white, scale=0.5] at (10,10) {$\M(Q/Q)$};

\node[above left] at (20,10) {$x\cdot 1$};
\node[above left, rotate=90] at (10,20) {$y\cdot \sigma$};

\node[scale=1.5] at (10,21.5) {$\hm^Q_{x+y\sigma}$};

\draw [decorate,decoration={brace,amplitude=7pt},xshift=-6pt,yshift=0pt, thick]
(10,0) -- (10,9) node [black,midway,xshift=-1.2cm] 
{$\coker(\tr)$};
\draw [decorate,decoration={brace,amplitude=7pt,mirror},xshift=6pt,yshift=0pt, thick]
(10,11) -- (10,20) node [black,midway,xshift=1cm] 
{$\ker(\res)$};
\draw (7.3,20.2)--(9,18);
\draw (11.8,-0.2)--(11,2);
\node at (12,-0.5) {$\ker(\tr)_Q$};
\node at (8,20.5) {$V^Q/\im(\res)$};
\end{tikzpicture}
\caption{Symbolic depiction of the coefficients of a general Mackey functor.}
\label{Figure symbolic depiction}
\end{figure}
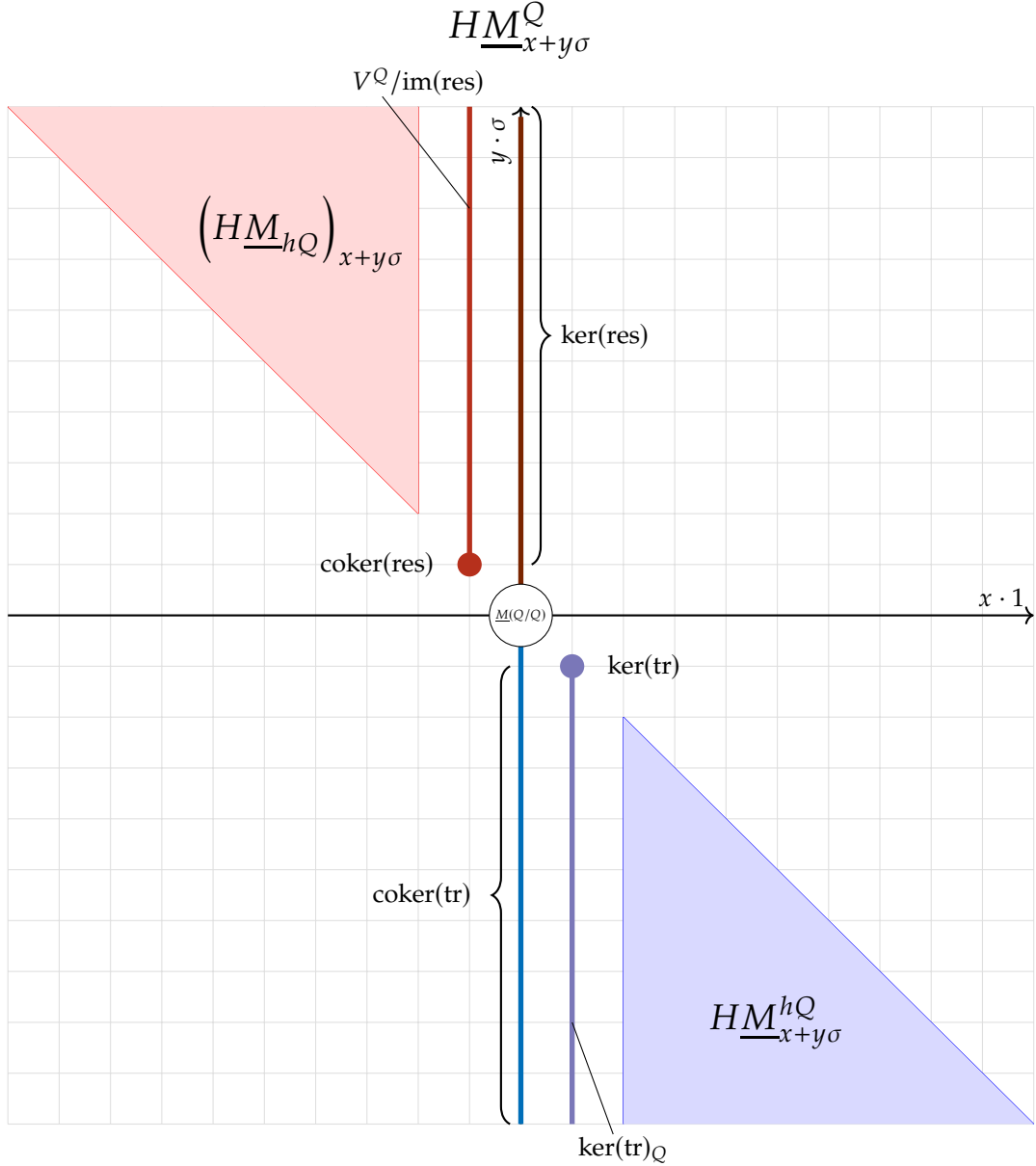
\begin{rem}
Recall from the introduction that the same values can be found in \cite[Theorem 8.1]{MR2025457}. 
\end{rem}
We are going to prove Theorem \ref{Theorem RO(Q)graded abelian structure of hmq} in a series of lemmas. Firstly, let us note that the point (1) of the theorem is actually Proposition \ref{prop groups hmq for x=0}, so it is already proven.

We start with proving a lemma which describes the general shape of $\hm^Q_\star$ - i.e., that it is zero below the antidiagonal on the half-plane $x<0$ and above the antidiagonal on the half-plane $x>0$:
\begin{lemma}
\label{lemma general shape}
For $x<0$, the groups $\hm^Q_{x+y\sigma}$ are zero if $y<-x$. If $x>0$ then $\hm^Q_{x+y\sigma}$ is zero for $y>-x$.
\end{lemma}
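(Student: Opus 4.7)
I would split along the two cases, each reducing to an exact-sequence sandwich.

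For Case 1 ($x<0$, $y<-x$), I would apply $\pi^Q_{x+y\sigma}$ to the top row $\hm_h \to \hm \to \hm^\Phi$ of the Tate diagram, obtaining the exact sequence
\[
\ho{\hm}{x+y\sigma} \to \hm^Q_{x+y\sigma} \to \hm^{\Phi Q}_{x+y\sigma}.
\]
Proposition \ref{prop coefficients of hfp and ho} identifies the left-hand term with $H_{x+y}(Q; H^y(S^{y\sigma},V))$, which vanishes because $x+y<0$. Theorem \ref{thm coefficients of geometric fixed points} gives $\hm^{\Phi Q}_{x+y\sigma}=0$ since $x<0$. Exactness yields $\hm^Q_{x+y\sigma}=0$.

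For Case 2 ($x>0$, $y>-x$), I would invoke Lemma \ref{lemma multiplication by a} and propagate the vanishing outward from the integer grading. The strict inequality $y>-x$ excludes the ``epi-only'' case of that lemma, so multiplication by $a$ is injective on $\hm^Q_{x+y\sigma}$ throughout the region. Starting from $\hm^Q_x=0$ (which holds because $x\neq 0$), I would iterate $a$ downward when $y\geq 0$ to obtain an injection $\hm^Q_{x+y\sigma}\hookrightarrow \hm^Q_x = 0$. When $-x<y<0$ (which forces $x\geq 2$), every intermediate source $y'\in\{y+1,\dots,0\}$ satisfies both $y'\neq x-1$ and $y'\neq -x$, so $a$ is actually an isomorphism at each of these steps, and one obtains $\hm^Q_{x+y\sigma}\cong \hm^Q_x = 0$.

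The only subtlety I expect is in Case 2: one must carefully partition by the sign of $y$ and verify that the boundary values $y=x-1$ and $y=-x$ from Lemma \ref{lemma multiplication by a} never obstruct the iteration. As a sanity check, Case 2 can also be established via the Tate pullback Mayer--Vietoris sequence --- immediate from Lemma \ref{lemma gfp iso to tate} when $x\geq 2$, and, for $x=1$, using the Mackey functor relation $\resm\circ\trm=N$ to conclude $\ker(\trm)\subseteq \prescript{}{N}V$, which makes $\epsilon_t$ injective on $\hm^{\Phi Q}_{1+y\sigma}$.
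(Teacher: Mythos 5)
Your proposal is correct, but only half of it matches the paper's argument. For Case 2 ($x>0$), your iteration of multiplication by $a$ out from $\hm^Q_x=0$ via Lemma \ref{lemma multiplication by a} is precisely the paper's proof; the paper then says the case $x<0$ ``follows similarly,'' meaning the very same $a$-iteration applied to the other half-plane. You instead handle Case 1 ($x<0$) via the Tate-square long exact sequence together with Proposition \ref{prop coefficients of hfp and ho} and Theorem \ref{thm coefficients of geometric fixed points}. This is a heavier but equally valid route --- all of that machinery is established in Sections 4--5, so there is no circularity --- and it gives an independent consistency check; but the paper's uniform $a$-iteration is more elementary, depending only on the cofibre sequence $Q_+\to S^0\to S^\sigma$ and not on any of the homotopy-orbit, geometric-fixed-point, or Tate computations. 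One small caveat worth flagging: the monomorphism condition in Lemma \ref{lemma multiplication by a} is stated as $x-1=y$, but from its proof (the left term $\hm^e_{x+y\sigma}$ vanishes unless $x+y=0$, the right unless $x+y=1$) and from its use in Proposition \ref{prop groups hmq for x=0}, the intended condition is $x+y=1$, i.e.\ $y=1-x$. In your Case 2 verification the intermediate sources $y'\in\{y+1,\dots,0\}$ happen to avoid both the stated boundary $y'=x-1$ and the corrected boundary $y'=1-x$, so your argument is robust to this either way.
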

\begin{proof}
The groups $\hm^Q_x$ are zero if $x\neq 0$. Let $x>0$. By Lemma \ref{lemma multiplication by a} iterated multiplication by $a$ gives isomorphisms $\hm^Q_x=\hm^Q_{x+y\sigma}$ if $y>-x$. Thus we get that $\hm^Q_{x+y\sigma}=0$ if $y>-x$. The case when $x<0$ follows similarly.
\end{proof}
\begin{lemma}
\label{lemma values 1+ysigma}
$\hm^Q_{1+y\sigma}$ is given by
\[
\hm^Q_{1+y\sigma}=
\begin{cases}
\ker(\trm)&\textrm{if }y=-1 \\
\ker(\trm)_Q&\textrm{if }y<-1 \\
0&\textrm{if }y>-1.
\end{cases}
\]
\end{lemma}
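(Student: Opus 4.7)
The plan is to split the statement into three ranges of $y$ and attack each with a different tool from the earlier sections. For $y > -1$, the result is immediate from Lemma \ref{lemma general shape}, which gives $\hm^Q_{x+y\sigma} = 0$ whenever $x > 0$ and $y > -x$; here $x = 1$.

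For $y = -1$, I would smash the cofibre sequence \eqref{cofibre} with $\hm$ and apply $\pi^Q_1(-)$. Since $\hm^Q_1 = 0$ by the definition of an Eilenberg-MacLane spectrum and $\hm^e_1 = \pi_1(\hm) = 0$, the induced long exact sequence collapses to
\[
0 \longrightarrow \hm^Q_{1-\sigma} \longrightarrow \hm^e_0 \xrightarrow{\trm} \hm^Q_0.
\]
The rightmost arrow is induced by the collapse $Q_+ \to S^0$, which on $\pi_0$ (after smashing with $\hm$) is precisely the transfer $\trm\colon V \to \M(Q/Q)$, by exactly the identification used in Proposition \ref{prop epsilon_0 f_0}. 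Hence $\hm^Q_{1-\sigma} \cong \ker(\trm)$, as required.

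For $y < -1$, I would instead invoke the upper row of the Tate diagram, namely the cofibre sequence $\hm_h \to \hm \to \hm^\Phi$, and examine its long exact sequence
\[
\ho{\hm}{1+y\sigma} \longrightarrow \hm^Q_{1+y\sigma} \longrightarrow \hm^{\Phi Q}_{1+y\sigma} \longrightarrow \ho{\hm}{y\sigma}.
\]
By Proposition \ref{prop coefficients of hfp and ho} the group $\ho{\hm}{x + z\sigma}$ is supported only where $x + z \geq 0$; for $y \leq -2$ both flanking homotopy-orbit groups fail this bound and hence vanish. We conclude $\hm^Q_{1+y\sigma} \cong \hm^{\Phi Q}_{1+y\sigma}$, and Theorem \ref{thm coefficients of geometric fixed points} identifies the latter with $(\ker \trm)_Q$.

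The only point deserving care is the identification in the $y = -1$ case of the connecting map $\hm^e_0 \to \hm^Q_0$ with $\trm$. This rests on the general principle that the collapse map $Q_+ \wedge \hm \to \hm$ realises the transfer on $\pi_0$, which is already implicit in the proof of Proposition \ref{prop epsilon_0 f_0}; beyond this, all three cases reduce to reading off values from results proved earlier in the paper.
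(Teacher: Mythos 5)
Your proof is correct and follows essentially the same path as the paper's: Lemma \ref{lemma general shape} disposes of $y>-1$, the long exact sequence of $Q_+\wedge\hm\to\hm\to S^\sigma\wedge\hm$ identifies $\hm^Q_{1-\sigma}$ with $\ker(\trm)$ for $y=-1$, and the top row of the Tate diagram together with the vanishing of $\ho{\hm}{1+y\sigma}$ and $\ho{\hm}{y\sigma}$ (Proposition \ref{prop coefficients of hfp and ho}) reduces the case $y<-1$ to Theorem \ref{thm coefficients of geometric fixed points}. The only cosmetic difference is that you spell out the identification of the boundary map with the transfer and note $\hm^e_1=0$, both of which the paper leaves implicit.
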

\begin{proof}
Firstly we note that the case $y>-1$ follows from Lemma \ref{lemma general shape}, so it is proven.

Let $y=-1$. If we smash the cofibre sequence \eqref{cofibre} with $\hm$ from Section \ref{sec structure of hm y axis} we obtain the following cofibre sequence:
\[
\begin{tikzcd}
Q_+\wedge\hm\rar& \hm\rar&S^\sigma\wedge\hm.
\end{tikzcd}
\]
This cofibration gives us the following exact sequence in homotopy:
\[
\begin{tikzcd}
\hm^Q_{1}\rar& \hm^Q_{1-\sigma}\rar&\hm^e_0\rar["\trm"]&\hm^Q_0.
\end{tikzcd}
\]
By the definition of Eilenberg-MacLane spectrum we have that $\hm^Q_1=0$. Thus $\hm^Q_{1-\sigma}=\ker(\trm)$.

Finally, let $y<-1$. By applying the long exact sequence in homotopy to the upper row of the Tate diagram we get
\[
\begin{tikzcd}
\ho{\hm}{1+y\sigma}\rar&\hm^Q_{1+y\sigma}\rar&\hm^{\Phi Q}_{1+y\sigma}\rar&\ho{\hm}{y\sigma}.
\end{tikzcd}
\]
By Proposition \ref{prop coefficients of hfp and ho} the outer terms of the sequence above are $0$, so $\hm^Q_{1+y\sigma}\cong\hm^{\Phi Q}_{1+y\sigma}$. By Theorem \ref{thm coefficients of geometric fixed points} we get that $\hm^Q_{1+y\sigma}\cong\ker(\trm)_Q$.
\end{proof}
\begin{lemma}
\label{lemma values sigma-1}
$\hm^Q_{y\sigma-1}$ is given by
\[
\hm^Q_{y\sigma-1}\cong
\begin{cases}
\coker(\resm)&\textrm{if }y=1\\
V^Q/\im(\resm)&\textrm{if } y>1 \\
0&\textrm{if }y<1.
\end{cases}
\]
\end{lemma}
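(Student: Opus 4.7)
The plan is to split into three cases according to the value of $y$. The case $y<1$ is immediate from Lemma \ref{lemma general shape} (applied with $x=-1$).

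For $y=1$, I would apply $\pi^Q_{\sigma-1}$ to the cofibre sequence $Q_+\wedge\hm\to\hm\to S^\sigma\wedge\hm$ obtained by smashing \eqref{cofibre} with $\hm$. Using $\hm^Q_{-1}=0$ and identifying the induced map $\hm^Q_0\to\hm^e_{\sigma-1}\cong V$ with the restriction $\resm$ (it is induced by $Q_+\to S^0$, exactly as in Proposition \ref{prop epsilon_0 f_0}), the resulting exact sequence terminates as $V\to\hm^Q_{\sigma-1}\to 0$, giving $\hm^Q_{\sigma-1}\cong V/\im(\resm)=\coker(\resm)$.

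For $y\geq 2$, I would use the Mayer--Vietoris long exact sequence coming from the Tate square viewed as a homotopy pullback of $Q$-spectra:
\[
\cdots\to\hm^Q_n\to\hm^{hQ}_n\oplus\hm^{\Phi Q}_n\to\hm^{tQ}_n\to\hm^Q_{n-1}\to\cdots.
\]
Evaluating at $n=y\sigma$ with $y\geq 2$ and plugging in the values from Propositions \ref{prop coefficients of hfp and ho}, \ref{prop coefficients of Tate} and Theorem \ref{thm coefficients of geometric fixed points}---namely $\hm^{hQ}_{y\sigma}=0$, $\hm^{\Phi Q}_{y\sigma}=\coker(\trm)$, $\hm^{tQ}_{y\sigma}=V^Q/NV$, and $\hm^{hQ}_{y\sigma-1}=\hm^{\Phi Q}_{y\sigma-1}=0$---the sequence collapses to
\[
\coker(\trm)\xrightarrow{(\epsilon_t)_*}V^Q/NV\to\hm^Q_{y\sigma-1}\to 0.
\]

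The main obstacle is identifying $(\epsilon_t)_*$. I would do so using commutativity of the Tate square at degree $0$: both composites $\hm^Q_0\to\hm^{tQ}_0$ (through $\hm^h$ and through $\hm^\Phi$) coincide, and by Proposition \ref{prop epsilon_0 f_0} the path through $\hm^h$ is $m\mapsto[\resm(m)]$. Since the projection $\M(Q/Q)\twoheadrightarrow\coker(\trm)$ is surjective, this forces $(\epsilon_t)_*([m])=[\resm(m)]$. By $a$-linearity of $\epsilon_t$ together with the $a$-periodicity of $\hm^\Phi$ and $\hm^t$ (Lemma \ref{lemma_a periodicity of geometric fp and tate}), the same description carries over to level $y\sigma$ for all $y$. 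Finally, the Mackey identity $\resm\circ\trm=N$ gives $NV\subseteq\im(\resm)$, so $\coker((\epsilon_t)_*)=V^Q/(\im(\resm)+NV)=V^Q/\im(\resm)$, as required.
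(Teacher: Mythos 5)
Your argument is correct and follows essentially the same route as the paper's. For $y<1$ you invoke Lemma~\ref{lemma general shape} exactly as the paper does, and for $y>1$ your Mayer--Vietoris sequence for the homotopy pullback Tate square is precisely the ``assembled by hand'' version of the two long exact sequences the paper runs through the rows of the Tate diagram; your identification of $(\epsilon_t)_*$ via commutativity of the square at degree $0$ (using Proposition~\ref{prop epsilon_0 f_0}) and $a$-periodicity, and the observation $NV\subseteq\im(\resm)$, match the paper's computation of $\coker(\epsilon_{t\ast})$ verbatim.

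One small imprecision in the $y=1$ case: the map $\hm^Q_0\to\hm^e_{\sigma-1}$ in the long exact sequence of $Q_+\wedge\hm\to\hm\to S^\sigma\wedge\hm$ is a \emph{boundary} map, induced by the connecting map $S^\sigma\to\Sigma Q_+$, not directly by $Q_+\to S^0$, so the phrase ``exactly as in Proposition~\ref{prop epsilon_0 f_0}'' does not quite apply. The cleaner route (the ``dual argument'' the paper alludes to) is to apply $[-,\hm]^Q$ to the cofibre sequence $Q_+\to S^0\to S^\sigma$, where the map $[S^0,\hm]^Q\to[Q_+,\hm]^Q$ is visibly induced by $Q_+\to S^0$ and hence equals $\resm$, giving $\hm^Q_{\sigma-1}\cong\coker(\resm)$ at once. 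Your conclusion is correct; only the justification needed to be phrased differently.
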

\begin{proof}
The case $y<1$ is proven by Lemma \ref{lemma general shape}. For $y=1$ we use a dual argument to the one given in the proof of Lemma \ref{lemma values 1+ysigma}. Thus we are left with one case.

Let $y>1$. By applying the long exact sequences in homotopy to the Tate diagram we obtain the following commutative diagram:
\[
\label{diagram in the proof}
\begin{tikzcd}
\hm^{\Phi Q}_{y\sigma}\rar\dar["\epsilon_{t\ast}"] &\left(\hm_{hQ}\right)_{y\sigma -1}\rar\dar["="] &\hm^Q_{y\sigma -1}\rar & 0 \\
\hm^{tQ}_{y\sigma}\rar &\left(\hm_{hQ}\right)_{y\sigma -1}&&
\end{tikzcd}
\tag{$\ast\ast$}
\]
Here $\epsilon_{t\ast}$ is the map induced by $\epsilon_t$. The zero in the top right corner comes from the fact that $\hm^{\Phi Q}_{y\sigma-1}=0$ by Theorem \ref{thm coefficients of geometric fixed points}. Note that the bottom arrow is a part of the following sequence:
\[
\begin{tikzcd}
\hm^{hQ}_{y\sigma}\rar&\hm^{tQ}_{y\sigma}\rar &\ho{\hm}{y\sigma-1}\rar&\hm^{hQ}_{y\sigma-1}.
\end{tikzcd}
\]
In this sequence the outer terms are zero by Proposition \ref{prop coefficients of hfp and ho} if $y>1$ and thus the map $\hm^{tQ}_{y\sigma}\to\ho{\hm}{y\sigma-1}$ in the diagram \eqref{diagram in the proof} is an isomorphism. Thus we can deduce from this diagram that $\hm^Q_{y\sigma-1}=\coker(\epsilon_{t\ast})$. So it remains to describe the map $\epsilon_{t\ast}\colon\hm^{\Phi Q}_{y\sigma}\to\hm^{tQ}_{y\sigma}$.

Since both $\hm^{\Phi}$ and $\hm^t$ are $a$-periodic by Lemma \ref{lemma_a periodicity of geometric fp and tate}, it is enough to identify this map for $y=0$. It is described by the following diagram:
\[
\begin{tikzcd}
\hm^Q_0=M (Q/Q) \rar\dar["\widetilde{\resm}"] & \hm^{\Phi Q}_0= \coker (\trm )\dar["\epsilon_{t\ast}"] \\
\hm^{hQ}_0= V^Q\rar & \hm^{tQ}_0=\sfrac{V^Q}{(1+\gamma)V}.
\end{tikzcd}
\]
The map $\widetilde{\resm}\colon \M(Q/Q)\to V^Q$ is the map $\resm$ with the codomain restricted to $V^Q$. Note that by properties of Mackey functors we have that $\im(\resm)\subset V^Q$. Both horizontal maps are canonical projections. So $\epsilon_{t\ast}$ is the map induced by $\resm$. Thus we have:
\[
\hm^Q_{y\sigma-1}\cong \coker(\epsilon_{t\ast})
=
\frac{\left(\sfrac{V^Q}{NV}\right)}{\resm\left(\sfrac{\M (Q/Q)}{\trm (V)}\right)}
\cong
V^Q/\im(\resm).
\]
\end{proof}
\begin{proof}[Proof of Theorem \ref{Theorem RO(Q)graded abelian structure of hmq}]
Follows from Lemmas \ref{lemma general shape}, \ref{lemma values 1+ysigma} and \ref{lemma values sigma-1}.
\end{proof}
\section{Examples}
\label{sec Examples}
In this section we present two examples of the structure shown in previous sections. Since both examples are Green functors, we will also describe a multiplicative structure on the coefficients. The main example is the Burnside Mackey functor $\underline{\mathbb{A}}$, as we are going to build the multiplicative structure of other Green functors upon the knowledge of $\ha^Q_\star$. However, a big part of computations of the coefficients of $\ha$ is the same as in the case of constant Mackey functor $\mathbb{Z}$, coefficients of which are already computed by Tate method in \cite{FourApproaches}. Thus we begin the examples with computations of $\hz^Q_\star$.
\subsection{Constant Mackey functor $\underline{\mathbb{Z}}$}

The Mackey functor $\underline{\mathbb{Z}}$ has the following form:
\[
\mackey{\mathbb{Z}}{\mathbb{Z}}{1}{2}
\]
Computations of $\hz^Q_\star$ as a ring may be found in \cite[Appendix B]{MR2240234} and later on, using the Tate square technique, in \cite{FourApproaches}. From the latter we recall the following two lemmas \cite[Lemma 2.1, Corollary 2.3, Lemma 2.5]{FourApproaches}:
\begin{lemma}
\label{lemma homotopy fixed points hz}
\[
\begin{array}{c}
\hz^{hQ}_\star= BB[u, u^{-1}] \\
\left(\hz_{hQ}\right)_\star= NB[u, u^{-1}] \\
\hz^{tQ}_\star=\mathbb{F}_2[a,a^{-1}][u,u^{-1}]
\end{array}
\]
where $BB=\mathbb{Z}[a]/2a$, $NB=\mathbb{Z}\oplus\Sigma^{-1+\sigma}\mathbb{F}_2[a^{-1}]$ and $|a|=-\sigma$, $|u|=2-2\sigma$.
\end{lemma}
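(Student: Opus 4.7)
The plan is to specialize the spectral-sequence computations of Section \ref{section hfp, ho and tate} to $V=\mathbb{Z}$ (and to $\tilde{\mathbb{Z}}$, according to the parity of the twisted degree), to identify explicit multiplicative generators, and finally to obtain the Tate piece by inverting $a$.

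First, I would apply Proposition \ref{prop coefficients of hfp and ho}. Since $H^y(S^{y\sigma},\mathbb{Z})$ is $\mathbb{Z}$ when $y$ is even and $\tilde{\mathbb{Z}}$ when $y$ is odd, the trigraded homotopy fixed points spectral sequence collapses: each $y$-layer is a single row computing group cohomology of $Q=C_2$. Plugging in the standard values $H^{2k}(Q;\mathbb{Z})=\mathbb{Z}/2$ for $k\geq 1$, $H^0(Q;\mathbb{Z})=\mathbb{Z}$, $H^{2k+1}(Q;\tilde{\mathbb{Z}})=\mathbb{Z}/2$, and zero otherwise, recovers the abelian group pattern drawn in Figure \ref{picture_homotopy fixed points}.

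Next I would identify the two generators. The class $u$ is chosen to be a generator of $\hz^{hQ}_{2-2\sigma}=H^0(Q;H^{-2}(S^{-2\sigma};\mathbb{Z}))\cong\mathbb{Z}$; this class detects the $(2-2\sigma)$-periodicity of the $E_2$-page noted in Section \ref{section hfp, ho and tate}. The class $a\in\pi^Q_{-\sigma}(S^0)$ maps to a generator of $\hz^{hQ}_{-\sigma}=H^1(Q;\tilde{\mathbb{Z}})\cong\mathbb{Z}/2$, so in particular $2a=0$. Since the trigraded spectral sequence is multiplicative via the pairings $S^{y_1\sigma}\wedge S^{y_2\sigma}\to S^{(y_1+y_2)\sigma}$, and since every non-trivial $\mathbb{Z}/2$ in Figure \ref{picture_homotopy fixed points} lies on an antidiagonal through a $u$-translate of the origin, the ring structure assembles as $\hz^{hQ}_\star = BB[u,u^{-1}]$ with $BB=\mathbb{Z}[a]/(2a)$.

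The calculation of $\left(\hz_{hQ}\right)_\star$ is dual, using the homology variant of Proposition \ref{prop coefficients of hfp and ho}. A single copy of $\mathbb{Z}$ at the origin together with the $\mathbb{Z}/2$'s extending in the $\sigma$ direction from degree $-1+\sigma$ (visible in Figure \ref{picture_homotopyorbits}) packages into $NB=\mathbb{Z}\oplus\Sigma^{-1+\sigma}\mathbb{F}_2[a^{-1}]$; the $u$-periodicity (and the factor $[u,u^{-1}]$) comes from the $\hz^{hQ}$-module structure of $\hz_{hQ}$ provided by Proposition \ref{prop X_h is a module over X^h}. Finally, Lemma \ref{lemma_a periodicity of geometric fp and tate} gives $\hz^{tQ}_\star=a^{-1}\hz^{hQ}_\star$; inverting $a$ in $\mathbb{Z}[a]/(2a)$ forces $2=0$, collapsing the ground ring to $\mathbb{F}_2[a,a^{-1}]$ and yielding $\mathbb{F}_2[a,a^{-1}][u,u^{-1}]$.

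The main obstacle I anticipate is justifying that $u$ lifts from an $E_2$-generator to an honest invertible element of $\hz^{hQ}_\star$, i.e.\ that the $(2-2\sigma)$-periodicity of the $E_2$-page is realised by multiplication by a genuine homotopy class. The underlying group (co)homology computations themselves are routine.
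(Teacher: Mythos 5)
The paper does not prove this lemma; it is recalled verbatim from Greenlees~\cite[Lemma 2.1, Corollary 2.3, Lemma 2.5]{FourApproaches}. Your proposal, by contrast, supplies an actual proof built on the machinery the paper sets up in Section~\ref{section hfp, ho and tate}, and it is essentially correct: it is exactly the sort of argument Greenlees runs in the cited source, specialised to $V=\mathbb{Z}$ and $\tilde{\mathbb{Z}}$. The additive computation via Proposition~\ref{prop coefficients of hfp and ho} and the standard values of $H^*(Q;\mathbb{Z})$, $H^*(Q;\tilde{\mathbb{Z}})$, $H_*(Q;\mathbb{Z})$, $H_*(Q;\tilde{\mathbb{Z}})$ is correct and matches Figures~\ref{picture_homotopyorbits} and~\ref{picture_homotopy fixed points}. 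The deduction of $\hz^{tQ}_\star$ by $a$-inversion via Lemma~\ref{lemma_a periodicity of geometric fp and tate} is also correct: inverting $a$ in $\mathbb{Z}[a]/(2a)$ forces $2=0$.

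Two points you flag as potential obstacles are real but easily closed. First, the invertibility of $u$: since $u$ is the generator of $E_2^{0,*}(-2)\cong\mathbb{Z}$, it is detected by the (multiplicative) edge homomorphism; choosing a generator $u'$ of $\hz^{hQ}_{2\sigma-2}\cong\mathbb{Z}$ likewise in filtration $0$, the edge map sends $uu'$ to a generator of $H^0(Q;\mathbb{Z})$, forcing $uu'=\pm1$ in $\hz^{hQ}_0\cong\mathbb{Z}$. Second, you assert that $a\in\pi^Q_{-\sigma}(S^0)$ maps to a generator of $\hz^{hQ}_{-\sigma}\cong\mathbb{Z}/2$ but do not justify it; this follows either from the long exact sequence of $Q_+\wedge\hz^h\to\hz^h\to S^\sigma\wedge\hz^h$, where the first map is multiplication by $2$ on $\pi^Q_0$ and the outer term $\pi^Q_{-1}(Q_+\wedge\hz^h)$ vanishes, or by tracing the ring map $\epsilon_\ast$ through the Tate square. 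Once $a$ is identified with the generator of $H^1(Q;\tilde{\mathbb{Z}})$ and $u$ with the generator of $H^0(Q;\mathbb{Z})$, the multiplicative identification $\hz^{hQ}_\star\cong BB[u,u^{-1}]$ follows from the cup-product structure on $H^*(Q;-)$ (in particular $s^2$ generates $H^2(Q;\mathbb{Z})$ when $s$ generates $H^1(Q;\tilde{\mathbb{Z}})$) and the fact that each twisted-degree layer of the collapsed $E_2$-page has a single nonzero row, so there are no additive or multiplicative extension problems. What your route buys over the paper's is a self-contained argument; what the paper buys is brevity by deferring to the literature.
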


\begin{lemma}
\label{lemma geometric fixed points hz}
The coefficients of $\hz^\Phi$ are given by $\hz^{\Phi Q}_{\star}=\mathbb{F}_2[a,a^{-1}][u]$.
\end{lemma}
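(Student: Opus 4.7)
The plan is to deduce the abelian group structure from Theorem~\ref{thm coefficients of geometric fixed points} applied to $\underline{\mathbb{Z}}$ and then upgrade to a ring identification by comparing with $\hz^t$ along the map $\epsilon_t\colon \hz^\Phi \to \hz^t$ from the Tate diagram. Since $\hz^t$ and $\hz^\Phi$ are both ring $Q$-spectra and $\epsilon_t$ is a ring map, this will determine the multiplicative structure on the fixed points.

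First I would read off the underlying $RO(Q)$-graded abelian group. For $\underline{\mathbb{Z}}$ we have $V = \mathbb{Z}$ with trivial action, $\resm = 1$, and $\trm = 2$, so $\ker(\trm) = 0$ and $\coker(\trm) = \mathbb{F}_2$. Theorem~\ref{thm coefficients of geometric fixed points} gives $\hz^{\Phi Q}_{0+y\sigma} = \mathbb{F}_2$, $\hz^{\Phi Q}_{1+y\sigma} = 0$, and for $x \geq 2$ the value is $\hat H^x(Q;\mathbb{Z})$, which is $\mathbb{F}_2$ for $x$ even and $0$ for $x$ odd. Negative fixed degrees vanish. By Lemma~\ref{lemma_a periodicity of geometric fp and tate} all of this is independent of $y$, so the underlying abelian group is a copy of $\mathbb{F}_2$ at every $(x,y)$ with $x \geq 0$ even, and zero otherwise.

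Second, I would identify the ring structure by comparing to $\hz^t$. Lemma~\ref{lemma gfp iso to tate} tells us $\epsilon_{t*}$ is an isomorphism in fixed degrees $\geq 2$. At fixed degree $0$ the identification of $\epsilon_{t*}$ in the proof of Lemma~\ref{lemma values sigma-1} shows it is the map $\coker(\trm) \to V^Q/NV$ induced by $\resm$, which for $\underline{\mathbb{Z}}$ is the identity $\mathbb{F}_2 \to \mathbb{F}_2$. At fixed degree $1$ both sides vanish, and at negative fixed degrees $\hz^{\Phi Q}$ vanishes. Hence $\epsilon_{t*}\colon \hz^{\Phi Q}_\star \to \hz^{tQ}_\star$ is an injection of $RO(Q)$-graded rings whose image is exactly the subgroup supported in non-negative fixed degrees.

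Finally, by Lemma~\ref{lemma homotopy fixed points hz} we have $\hz^{tQ}_\star = \mathbb{F}_2[a,a^{-1}][u,u^{-1}]$ with $|a| = -\sigma$ and $|u| = 2 - 2\sigma$, so the monomial $a^i u^j$ sits in fixed degree $2j$. The non-negative fixed degree subring is precisely $\mathbb{F}_2[a,a^{-1}][u]$, and the desired identification follows. The step requiring the most care is confirming that $\epsilon_t$ really is a map of ring spectra (so the image is a subring) and that the identification at fixed degree $0$ lines up; once those are in hand, the description of the image is an automatic consequence of the degree computation $|a^i u^j| = 2j - (i+2j)\sigma$.
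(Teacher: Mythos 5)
Your proof is correct and follows essentially the same strategy the paper uses for the analogous computation of $\ha^{\Phi Q}_\star$ (the paper cites Greenlees's \cite{FourApproaches} for this particular $\hz$ lemma rather than reproving it): read off the abelian group structure from Theorem~\ref{thm coefficients of geometric fixed points}, then exploit that $\epsilon_t\colon\hm^\Phi\to\hm^t$ is a map of ring $Q$-spectra which is an isomorphism in fixed degrees $\geq 2$, together with the known ring $\hz^{tQ}_\star=\mathbb{F}_2[a,a^{-1}][u,u^{-1}]$. Your extra step of verifying that $\epsilon_{t*}$ is injective in fixed degree $0$ (and hence globally, with image precisely the non-negative fixed degree subring) is a worthwhile clarification that makes the identification of the image airtight.
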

\begin{rem}
Note that the element $a$ in Lemmas \ref{lemma homotopy fixed points hz} and \ref{lemma geometric fixed points hz} is the same as the element $a\cdot 1$ in the sense of Lemma \ref{lemma multiplication by a}, so there is no clash of the notation.
\end{rem}
Now we are in a position to describe the structure of $\hz^Q_\star$ as a ring.
%
%Theorem - Coefficients of HZ
%
\begin{theorem}
\label{thm Coefficients of Hz}
\leavevmode
\begin{enumerate}
\item The $RO(Q)$-graded abelian group structure of $\hz^Q_\star$ is given by:
\[
\hz^Q_{x+y\sigma}=
\begin{cases}
\mathbb{Z}&\textrm{if}\;y=-x\;\textrm{and}\;x=2n\;\textrm{for}\;n\in\mathbb{Z} \\
\mathbb{Z}/2&\textrm{if}\;y<-x\;\textrm{and}\;x=2n\;\textrm{for}\;n\in\mathbb{Z}_{\geq 0} \\
\mathbb{Z}/2&\textrm{if}\;y\geq-x\;\textrm{and}\;x=-2n-1\;\textrm{for}\;n\in\mathbb{Z}_{\geq 1} \\
0&\textrm{else.}
\end{cases}
\]
\item The multiplicative structure of $\hz^Q_\star$ is characterized by the following properties:
\begin{enumerate}
\item it is strictly commutative;
\item red lines in Figure \ref{Figure coeffs hz} represent multiplication by $a\in\hz^Q_{-\sigma}$;
\item blue dashed lines represent multiplication by $u\in\hz^Q_{2-2\sigma}$;
\item the map $u\colon\hz^Q_{2\sigma-2}\to\hz^Q_0$ is multiplication by $2$;
\item the groups $\hz^Q_{2n\sigma-2n}$ for $n>0$ are generated by $2u^{-n}$.
\end{enumerate}
\end{enumerate}
\end{theorem}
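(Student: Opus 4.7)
The plan is as follows. For part (1), I would apply Theorem \ref{Theorem RO(Q)graded abelian structure of hmq} to $\underline{\mathbb{Z}}$: here $V=\mathbb{Z}$ with trivial $Q$-action, $\resm=\id$, and $\trm=2$, which gives $\ker(\resm) = \coker(\resm) = V^Q/\im(\resm) = \ker(\trm) = 0$ and $\coker(\trm)=\mathbb{F}_2$. This immediately determines the $y$-axis values and makes both columns $x=\pm 1$ vanish. For $|x|\geq 2$, clauses (4) and (5) of that theorem identify $\hz^Q_{x+y\sigma}$ with $\hz^{hQ}_{x+y\sigma}$ or $\ho{\hz}{x+y\sigma}$, and the numerical values are read off directly from Lemma \ref{lemma homotopy fixed points hz}.

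For part (2), I would first declare $a\in\hz^Q_{-\sigma}$ and $u\in\hz^Q_{2-2\sigma}$ to be the images of their namesakes under the unit map $\ha\to\hz$; a comparison with Lemma \ref{lemma homotopy fixed points hz} through $\epsilon\colon\hz\to\hz^h$ confirms these are the expected generators. The $a$-action is immediate from Lemma \ref{lemma multiplication by a} together with Proposition \ref{prop groups hmq for x=0}, which establishes (b). For the $u$-action, observe that $\hz_h = EQ_+\wedge\hz$ is canonically an $\hz$-module and the map $f\colon \hz_h\to\hz$ is $\hz$-linear, so $f_\ast$ commutes with multiplication by $u$. On the half-planes $x\geq 2$ and $x\leq -2$, where the identifications $\hz^Q_\star\cong \hz^{hQ}_\star$ and $\hz^Q_\star\cong \ho{\hz}{\star}$ hold, the $u$-action is therefore exactly the one given by Lemma \ref{lemma homotopy fixed points hz}. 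The only remaining case is the crossing $u\cdot\colon \hz^Q_{2\sigma-2}\to\hz^Q_0$, which I would compute via the commutative square
\[
\begin{tikzcd}
\ho{\hz}{2\sigma-2} \rar["u\cdot"] \dar["f_\ast"'] & \ho{\hz}{0} \dar["f_0"] \\
\hz^Q_{2\sigma-2} \rar["u\cdot"] & \hz^Q_0.
\end{tikzcd}
\]
Here the top $u\cdot$ is an isomorphism by $u$-invertibility of $\ho{\hz}{\star}$, the left $f_\ast$ is an isomorphism by clause (5) of Theorem \ref{Theorem RO(Q)graded abelian structure of hmq}, and Proposition \ref{prop epsilon_0 f_0} identifies $f_0\colon \ho{\hz}{0}=\mathbb{Z}\to \mathbb{Z}=\hz^Q_0$ with multiplication by $\trm=2$. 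This yields (d), and iterating across the half-plane $x\leq -2$ produces (e).

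For strict commutativity, the $RO(Q)$-graded commutativity sign for a product lies in the image of $\mathbb{A}(Q)=\pi_0^Q(S^0)$ in $\hz^Q_0=\mathbb{Z}$; the relevant input is that the swap of $S^\sigma\wedge S^\sigma$ equals $1-\omega\in\mathbb{A}(Q)$, which under the unit map acts as $1-\trm\circ\resm=-1$. A direct check on products of the ring generators $a$, $u$, and the classes $2u^{-n}$ - using that $a^2$ sits in a $2$-torsion group so signs are invisible, and that all other generators have $\sigma$-degree of even parity so $1-\omega$ appears to an even power - forces every sign to be $+1$. The principal obstacle is the single bridging calculation of $u\cdot\colon \hz^Q_{2\sigma-2}\to\hz^Q_0$: away from the horizontal axis the arithmetic is controlled by the well-understood rings $\hz^{hQ}_\star$ and $\ho{\hz}{\star}$, and it is precisely Proposition \ref{prop epsilon_0 f_0} that captures the Tate-diagram content needed to bridge them.
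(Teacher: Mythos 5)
Your proposal is correct and follows essentially the same route as the paper: part (1) is read off from Theorem \ref{Theorem RO(Q)graded abelian structure of hmq} together with the explicit computation of $\ker(\resm)$, $\coker(\trm)$, etc.\ for $\underline{\mathbb{Z}}$; (b) follows from Lemma \ref{lemma multiplication by a}; (d) is proved by the identical commutative square using Proposition \ref{prop epsilon_0 f_0}; and (e) and (a) are the same iteration and parity arguments. Two small remarks: for (c), your appeal to $\hz$-linearity of $f$ only governs the $u$-action on the $x\leq -2$ region, so for $x\geq 2$ you implicitly need (as the paper does explicitly) that $\epsilon_\ast\colon\hz^Q_\star\to\hz^{hQ}_\star$ is a ring map and an isomorphism there; and for commutativity, the list $a$, $u$, $2u^{-n}$ omits the indecomposable $\mathbb{Z}/2$-classes in fixed degrees $x=-2n-1\leq -3$, but since these (and all off-antidiagonal classes) are $2$-torsion the argument closes exactly as in the paper — products landing off the antidiagonal are $2$-torsion so signs are invisible, and the antidiagonal $\mathbb{Z}$'s sit in even-even degree.
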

\begin{figure}[ht]
\begin{tikzpicture}[scale=0.7]
%Kratka
\draw[help lines, thin, lgray] (0,0) grid (20,20);
\draw[->] (0,10)--(20,10);
\draw[->] (10,0)--(10,20);
%Czerwone linie po lewej
\draw[red] (10,0)--(10,10);
\draw[red] (1,20)--(1,19);
\draw[red] (3,20)--(3,17);
\draw[red] (5,20)--(5,15);
\draw[red] (7,20)--(7,13);

%CZerwone linie po prawej
\draw[red] (12,8)--(12,0);
\draw[red] (14,0)--(14,6);
\draw[red] (16,0)--(16,4);
\draw[red] (18,0)--(18,2);

%Niebieskie linie
\draw[blue, dashed] (0,20)--(20,0);
\draw[blue, dashed] (1,20)--(7,14);
\draw[blue, dashed] (2,20)--(7,15);
\draw[blue, dashed] (3,20)--(7,16);
\draw[blue, dashed] (4,20)--(7,17);
\draw[blue, dashed] (5,20)--(7,18);

\foreach \x in {1,...,9}
\draw[blue, dashed] (10,\x)--(10+\x,0);

%Grube kropki
\foreach \x in {0,2,4,6,8,10}
\node[circle,fill,scale=0.7] at (10-\x,10+\x) {};
\foreach \x in {2,4,6,8,10}
\node[circle,fill,scale=0.7] at (10+\x,10-\x) {};

%Etykiety
\foreach \x in {1,...,4}
\pgfmathsetmacro\r{\x*2}
\node[below left] at(10-\r,10+\r) {$2u^{-\x}$};
\foreach \x in {1,...,4}
\pgfmathsetmacro\r{\x*2}
\node[above right] at (10+\r,10-\r) {$u^{\x}$};
\foreach \x in {1,...,9}
\node[left] at (10, 10-\x) {$a^\x$};

%Pustekropki w górę
\foreach \x in {0,...,1}
\node[circle, draw, fill=white, scale=0.3] at (1,20-\x) {};
\foreach \x in {0,...,3}
\node[circle, draw, fill=white, scale=0.3] at (3,20-\x) {};
\foreach \x in {0,...,5}
\node[circle, draw, fill=white, scale=0.3] at (5,20-\x) {};
\foreach \x in {0,...,7}
\node[circle, draw, fill=white, scale=0.3] at (7,20-\x) {};

%Pustekropki w dół
\foreach \x in {0,...,1}
\node[circle, draw, fill=white, scale=0.3] at (18,\x) {};
\foreach \x in {0,...,3}
\node[circle, draw, fill=white, scale=0.3] at (16,\x) {};
\foreach \x in {0,...,5}
\node[circle, draw, fill=white, scale=0.3] at (14,\x) {};
\foreach \x in {0,...,7}
\node[circle, draw, fill=white, scale=0.3] at (12,\x) {};
\foreach \x in {0,...,9}
\node[circle, draw, fill=white, scale=0.3] at (10,\x) {};

\node[above left] at (20,10) {$x\cdot 1$};
\node[below left] at (10,20) {$y\cdot \sigma$};

\node[scale=1.5] at (10,21) {$H\underline{\mathbb{Z}}^Q_{x+y\sigma}$};

\node at (0.5,-1) {Key:};
\node[circle, fill,scale=0.7] at (3,-1) {};
\node at (3.5,-1) {$\mathbb{Z}$};
\draw (6,-1) circle (2pt);
\node at (7,-1) {$\mathbb{Z}/2$};
\end{tikzpicture}
\caption{Coefficients of $\hz$.}
\label{Figure coeffs hz}
\end{figure}
\begin{proof}
\leavevmode
\begin{enumerate}
\item This comes from Theorem \ref{Theorem RO(Q)graded abelian structure of hmq}:
\begin{itemize}
\item if $x\leq -2$ we have that $\hz^{Q}_{x+y\sigma}=\ho{\hz}{x+y\sigma}$, so the structure is described by Lemma \ref{lemma homotopy fixed points hz};
\item if $x\geq 2$ then $\hz^Q_{x+y\sigma}=\hz^{hQ}_{x+y\sigma}$, so the structure is also described by Lemma \ref{lemma homotopy fixed points hz};
\item if $-1\leq x\leq 1$ the structure comes from the following calculations:
\begin{align*}
\hz^Q_{1-\sigma}&=\ker(\tr_{\underline{\mathbb{Z}}})=0\\
\hz^Q_{1-y\sigma}&=\ker(\tr_{\underline{\mathbb{Z}}})_Q=0 \textrm{ for } y>1\\
\hz^Q_{\sigma-1}&=\coker(\res_{\underline{\mathbb{Z}}})=0\\
\hz^Q_{y\sigma-1}&=V^Q/\im(\res_{\underline{\mathbb{Z}}})=0\textrm{ for } y>1\\
\hz^Q_{y\sigma}&=\ker(\res_{\underline{\mathbb{Z}}})=0\textrm{ for } y>0\\
\hz^Q_{y\sigma}&=\coker(\tr_{\underline{\mathbb{Z}}})=\mathbb{Z}/2\textrm{ for } y<0
\end{align*}
\end{itemize}
\item
\begin{enumerate}
\item Since $\underline{\mathbb{Z}}$ is a Green functor, $\hz$ is a commutative ring $Q$-spectrum and so $\hz^Q_\star$ is a graded commutative ring. The graded commutativity rule is given as follows: if $\alpha\in \hz^Q_{x+y\sigma}$ and $\beta\in \hz^Q_{x'+y'\sigma}$ then 
\[
\alpha\beta=(-1)^{xx'}(1-\omega)^{yy'}\beta\alpha,
\]
where $\omega$ is the class of $Q/e$ in $\aq$ (see Observation \ref{obs RO(Q)-graded ring} and Remark \ref{rem graded comm rule}). However, since all entries except of the antidiagonal are either zero or $\mathbb{Z}/2$, the sign rule might give non-trivial sign only in $\mathbb{Z}$'s on the antidiagonal. But they all lie in even fixed and twisted degrees, so the sign is always 1. 
\item The multiplication by $a$ is described by Lemma \ref{lemma multiplication by a}.
\item Firstly recall from Section \ref{sec The Tate method} that the map $\epsilon_\ast\colon\hz^Q_\star\to\hz^{hQ}_\star$ is a ring map. Thus for $x\geq 0$ multiplication by the element $u$ is described by Lemma \ref{lemma homotopy fixed points hz}. By Proposition \ref{prop X_h is a module over X^h} we have that $\ho{\hz}{\star}$ is a module over $\hz^Q_\star$, so the claim follows from Lemma \ref{lemma homotopy fixed points hz}.
\item We have the following commutative diagram:
\[
\begin{tikzcd}
\left( \hz_{hQ}\right)_{2\sigma-2} \rar["u"]\dar &   \left( \hz_{hQ}\right)_0 \dar["f_0"] \\
\hz^Q_{2\sigma-2}\rar["u"] & \hz^Q_0.
\end{tikzcd}
\]
By Lemma \ref{lemma homotopy fixed points hz} the upper horizontal arrow is an isomorphism. By Theorem \ref{Theorem RO(Q)graded abelian structure of hmq} the left vertical arrow is an isomorphism, so multiplication  $u\colon\hz^Q_{2\sigma-2}\to\hz^Q_0$ is up to isomorphism the same as the right vertical map $f_0$. By Proposition \ref{prop epsilon_0 f_0} this map is induced by the transfer. Thus if $\alpha\in\hz^Q_{2\sigma-2}$ we obtain that $u\cdot\alpha=2\alpha\in\hz^Q_0$.
\item Let $\theta$ be the generator corresponding to $1$ in $\hz^Q_{2\sigma-2}\cong\mathbb{Z}$. By the previous point $u\cdot\theta=2$, so $\theta=2u^{-1}$. Since $\hz^Q_{x+y\sigma}\cong\ho{\hz}{x+y\sigma}$ if $x\leq -2$ and multiplication by $u$ is an isomorphism on $\ho{\hz}{\star}$, we get that $\hz^Q_{2n\sigma-2n}$ is generated by $2u^{-n}$ for $n>0$.
\end{enumerate}
\end{enumerate}
\end{proof}
\begin{rem}
The ring $\hz^Q_{\star}$ has a more concise description as:
\[
\hz^Q_{\star}\cong BB[u]\oplus u^{-1}\cdot NB[u^{-1}]
\]
with $BB$ as before and $NB=2\mathbb{Z}\oplus\Sigma^{-1+\sigma}\mathbb{F}_2[a^{-1}]$. The structure of $BB$-module of $NB$ is as suggested by the notation. See \cite[Corollary 2.6]{FourApproaches}.
\end{rem}
\subsection{Burnside Mackey functor}
The second example is the Burnside Mackey functor $\A$. It is the most important example - since we will build our understanding of multiplicative structure of Eilenberg-MacLane spectra upon the knowledge of the graded ring structure of $\ha^Q_\star$. 

The Mackey functor $\A$ has the following form:
\[
\mackey{\sfrac{\mathbb{Z}[\omega]}{\omega^2-2\omega}}{\mathbb{Z}}{}{}
\]
with the transfer given by multiplication by $\omega$ and the restriction given by evaluating at $\omega=2$. Note that the $Q/Q$-level of $\A$ is in fact the Burnside ring $\mathbb{A}(Q)$ with $\omega$ being the class of $Q/e$. The Mackey functor valued coefficients of $\A$ may be found in \cite[Section 2]{MR979507} and \cite[Proposition 1.7(b)]{MR2025457}.

The coefficients of $\ha^h$, $\ha_h$ and $\ha^t$ depend only on $\A(Q/e)=\mathbb{Z}$ (see Propositions \ref{prop coefficients of hfp and ho} and \ref{prop coefficients of Tate}), which is the same as $\underline{\mathbb{Z}}(Q/e)$. So $\ha^{hQ}_\star\cong\hz^{hQ}_\star$, analogously for $\ho{\hz}{\star}$ and $\ha^{tQ}_\star$. Thus Lemma \ref{lemma homotopy fixed points hz} gives a description for this entries of the Tate diagram.

Now we need to compute the coefficients of $\ha^\Phi$:
\begin{lemma}
\[
\ha^{\Phi Q}_\star\cong\mathbb{Z}[a,a^{-1}][u]/2u.
\]
\end{lemma}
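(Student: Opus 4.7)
The plan is to specialize Theorem \ref{thm coefficients of geometric fixed points} to $\M = \A$ for the additive structure, and then match multiplicative generators by comparison with the Tate spectrum via $\epsilon_t$.

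For the additive picture, $V = \A(Q/e) = \mathbb{Z}$ carries the trivial $Q$-action, and $\tr_{\A} \colon \mathbb{Z} \to \mathbb{A}(Q)$ sends $1 \mapsto \omega$, which is injective with $\coker(\tr_{\A}) = \mathbb{A}(Q)/(\omega) \cong \mathbb{Z}$. Tate cohomology gives $\hat{H}^x(Q;\mathbb{Z}) = \mathbb{Z}/2$ when $x$ is even and $0$ when $x$ is odd. Plugging into Theorem \ref{thm coefficients of geometric fixed points} and combining with the $a$-periodicity of Lemma \ref{lemma_a periodicity of geometric fp and tate}, I find that $\ha^{\Phi Q}_{x+y\sigma}$ equals $\mathbb{Z}$ for $x = 0$, equals $\mathbb{Z}/2$ for $x \geq 2$ even, and vanishes otherwise, independently of $y$. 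Since $|a| = -\sigma$ and $|u| = 2-2\sigma$, this agrees bidegree-by-bidegree with the additive structure of $\mathbb{Z}[a,a^{-1}][u]/(2u)$.

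For the multiplicative structure, take $a \in \ha^{\Phi Q}_{-\sigma}$ to be the image under $g_*$ of the Euler class in $\pi^Q_{-\sigma}(S^0)$; it is a unit by $a$-periodicity. Let $u \in \ha^{\Phi Q}_{2-2\sigma}$ be a generator of the $\mathbb{Z}/2$ in that bidegree, so the relation $2u = 0$ is forced by the additive calculation. To check that each product $a^m u^n$ with $n \geq 1$ is nonzero, and hence that no further relations can hold, I use the ring map $\epsilon_{t\ast} \colon \ha^{\Phi Q}_\star \to \ha^{tQ}_\star$. Since $\A(Q/e) = \mathbb{Z} = \underline{\mathbb{Z}}(Q/e)$ as $\mathbb{Z}[Q]$-modules, Proposition \ref{prop coefficients of Tate} together with Lemma \ref{lemma homotopy fixed points hz} identifies $\ha^{tQ}_\star \cong \mathbb{F}_2[a,a^{-1}][u,u^{-1}]$, and a comparison of the two additive descriptions shows that $\epsilon_{t\ast}$ is an isomorphism in every fixed degree $\geq 2$. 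Hence every $a^m u^n$ with $n \geq 1$ is nonzero in $\ha^{\Phi Q}_\star$, and the assignment $a \mapsto a$, $u \mapsto u$ realizes the claimed ring isomorphism $\mathbb{Z}[a,a^{-1}][u]/(2u) \xrightarrow{\cong} \ha^{\Phi Q}_\star$.

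The only mildly delicate step is confirming that the chosen $u \in \ha^{\Phi Q}_{2-2\sigma}$ lines up under $\epsilon_{t\ast}$ with the named generator of $\ha^{tQ}_{2-2\sigma}$, but this is automatic once $u$ is nonzero, since both groups are cyclic of order $2$ and $\epsilon_{t\ast}$ is already known to be an isomorphism in that bidegree.
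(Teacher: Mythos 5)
Your argument is correct and mirrors the paper's own proof: both read off the additive structure from Theorem~\ref{thm coefficients of geometric fixed points} specialized to $\A$ (using $\ker(\tr_{\A})=0$, $\coker(\tr_{\A})\cong\mathbb{Z}$, and $\hat H^*(Q;\mathbb{Z})$), invoke $a$-periodicity, and then pin down the multiplicative structure via the ring map $\epsilon_{t\ast}\colon\ha^{\Phi Q}_\star\to\ha^{tQ}_\star$ being an isomorphism for fixed degree $\geq 2$, comparing against $\ha^{tQ}_\star\cong\mathbb{F}_2[a,a^{-1}][u,u^{-1}]$ from Lemma~\ref{lemma homotopy fixed points hz}. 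One small imprecision: the fact that $\epsilon_{t\ast}$ is an isomorphism in fixed degrees $\geq 2$ does not follow from merely matching the additive groups on both sides (a self-map of $\mathbb{Z}/2$ need not be an isomorphism); it is supplied by Lemma~\ref{lemma gfp iso to tate}, which is what Theorem~\ref{thm coefficients of geometric fixed points} records, so you should cite that rather than appealing to an ``additive comparison.''
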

\begin{proof}
By Theorem \ref{thm coefficients of geometric fixed points} we have that $\ha^{\Phi Q}_0\cong\mathbb{Z}$ and $\ha^{\Phi Q}_1\cong 0$. So by $a$-periodicity of $\ha^\Phi$ (see Lemma \ref{lemma_a periodicity of geometric fp and tate}) we obtain that $\ha^{\Phi Q}_{\ast\sigma}\cong\mathbb{Z}[a,a^{-1}]$. Since the map $\epsilon_{t\star}\colon\ha^{\Phi Q}_{x+y\sigma}\to\ha^{tQ}_{x+y\sigma}$ induced by $\epsilon_t$ is a ring map and it is an isomorphism if $x\geq 2$, the result follows by Lemma \ref{lemma homotopy fixed points hz}.
\end{proof}

\begin{theorem}
\label{thm coeffs of ha}
\leavevmode
\begin{enumerate}
\item The $RO(Q)$-graded abelian group structure of $\ha^Q_\star$ is given by:
\[
\ha^Q_{x+y\sigma}\cong
\begin{cases}
\mathbb{A}(Q)&\textrm{if }x=y=0 \\
\mathbb{Z}&\textrm{if }x=0\textrm{ and }y\neq 0\\
\mathbb{Z}&\textrm{if }x\textrm{ even and }y=-x\\
\mathbb{Z}/2&\textrm{if }x\textrm{ odd, }x\leq -3\textrm{ and }y>x\\
\mathbb{Z}/2&\textrm{if }x\textrm{ even, }x\geq 2\textrm{ and }y<x\\
0&\textrm{else }.
\end{cases}
\]
\item The multiplicative structure of $\ha^Q_\star$ is given by the following properties:
\begin{enumerate}
\item it is strictly commutative;
\item red lines on Figure \ref{figure coeffs of ha} represent multiplication by $a$;
\item blue dashed lines represent multiplication by $u$, the generator of $\ha^Q_{2-s\sigma}$ corresponding to $1$;
\item if $\tau$ is a generator of $\ha^Q_{\sigma}$, then $a\tau=\omega-2$;
\item $u\colon\ha^Q_{2\sigma-2}\to\ha^Q_0$ is the transfer map in $\A$ and $u\colon\ha^Q_{0}\to\ha^Q_{2-2\sigma}$ is the restriction;
\item for $n>0$ the group $\ha^Q_{2n\sigma-2n}$ is generated by $\omega u^{-n}$.
\end{enumerate}
\end{enumerate}
In particular, the subring consisting of entries for $x\geq 0$ and $y\leq 0$ is a truncated polynomial algebra
\[\frac{\mathbb{A}(Q)[a,u]}{a\omega,2au}.\] 

The data above is presented in Figure \ref{figure coeffs of ha}.
\end{theorem}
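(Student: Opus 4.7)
For the additive part (1), I apply Theorem \ref{Theorem RO(Q)graded abelian structure of hmq} to $\A$. All the relevant invariants are immediate from the Lewis diagram: $\res_{\A}(a + b\omega) = a + 2b$ is surjective, so $\coker(\res_{\A}) = 0$ and $V^Q/\im(\res_{\A}) = 0$; $\tr_{\A}$ is injective with image $\omega\mathbb{Z}$, so $\ker(\tr_{\A}) = 0 = \ker(\tr_{\A})_Q$; $\ker(\res_{\A}) = \mathbb{Z}\cdot(\omega - 2)$; and $\coker(\tr_{\A}) \cong \mathbb{Z}$. For the regions with $|x| \geq 2$, I observe that $\A(Q/e) = \mathbb{Z} = \underline{\mathbb{Z}}(Q/e)$, so by Propositions \ref{prop coefficients of hfp and ho} and \ref{prop coefficients of Tate} one has $\ha^{hQ}_\star \cong \hz^{hQ}_\star$ and $\ho{\ha}{\star} \cong \ho{\hz}{\star}$, already made explicit in Lemma \ref{lemma homotopy fixed points hz}. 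Assembling the two halves via Theorem \ref{Theorem RO(Q)graded abelian structure of hmq} produces the stated table.

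For the multiplicative claims, items (b)--(f) are essentially corollaries of facts from earlier sections. Item (b) is immediate from Lemma \ref{lemma multiplication by a}. For (c), on the region $x \geq 0$ the ring map $\epsilon_*\colon \ha^Q_\star \to \ha^{hQ}_\star$ is an isomorphism by Theorem \ref{Theorem RO(Q)graded abelian structure of hmq}, so $u$-multiplication matches Lemma \ref{lemma homotopy fixed points hz}; on $x \leq -2$ the same description transfers via the $\ha^Q_\star$-module structure on $\ho{\ha}{\star}$ from Proposition \ref{prop X_h is a module over X^h}. Item (d) is the identification in Proposition \ref{prop groups hmq for x=0} of $a\colon \ha^Q_\sigma \to \ha^Q_0$ as the inclusion $\ker(\res_{\A}) \hookrightarrow \aq$, sending the generator $\tau$ to $\omega - 2$. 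For (e) I compare via the square
\[
\begin{tikzcd}
\ho{\ha}{2\sigma-2} \rar["u"]\dar["\simeq"] & \ho{\ha}{0}\dar["f_0"] \\
\ha^Q_{2\sigma-2}\rar["u"] & \ha^Q_0:
\end{tikzcd}
\]
the top row is an isomorphism by Lemma \ref{lemma homotopy fixed points hz}, the left column by Theorem \ref{Theorem RO(Q)graded abelian structure of hmq}, and $f_0$ is the transfer by Proposition \ref{prop epsilon_0 f_0}; the dual square built from $\epsilon_0$ identifies $u\colon \ha^Q_0 \to \ha^Q_{2-2\sigma}$ with $\res_{\A}$. Item (f) then follows by letting $\theta$ generate $\ha^Q_{2\sigma-2}\cong\mathbb{Z}$: by (e) one has $u\theta = \tr_{\A}(1) = \omega$, so $\theta = \omega u^{-1}$, and iterating through the $\ho{\ha}{\star}$-module structure (on which $u$ is invertible by Lemma \ref{lemma homotopy fixed points hz}) yields the generators $\omega u^{-n}$.

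The main obstacle is strict commutativity (a). I need the sign unit $(-1)^{xx'}(1-\omega)^{yy'} \in \aq^\times$ to act as the identity on every product $\beta\alpha$. The strategy is to show $\omega$ acts as zero on every nonzero $\ha^Q_{x+y\sigma}$ with $(x,y)\neq(0,0)$, so that $1-\omega$ acts as the identity, and then handle the $(-1)^{xx'}$ factor separately. In the $\mathbb{Z}/2$ regions ($|x|\geq 2$), the $\aq$-action factors through $\ha^{hQ}_0 = \mathbb{Z}$ (via $\epsilon_*$ when $x \geq 2$, and via Proposition \ref{prop X_h is a module over X^h} when $x \leq -2$), and $\omega$ maps to $\res_{\A}(\omega) = 2 \equiv 0 \pmod 2$. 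On the positive $\sigma$-axis $\ker(\res_{\A})$ is killed by $\omega$ because $\omega(\omega - 2) = 0$ in $\aq$, and on the negative $\sigma$-axis $\coker(\tr_{\A})$ is killed by $\omega = \tr_{\A}(1)$. The remaining integer entries sit on the antidiagonal at $(2n, -2n)$ with $n\neq 0$; any product of two such elements has $xx'$ and $yy'$ products of even integers, hence even, so $(-1)^{xx'} = 1$ and $(1-\omega)^{yy'}$ is a square of the involution $1-\omega$, which equals $1$. All other products either land in one of the previous cases or vanish, completing the verification. The most delicate point is tracking exactly which module structure is in play for each subgroup and systematically exploiting the Burnside-ring relation $\omega(\omega-2)=0$ together with the restriction $\omega\mapsto 2$.
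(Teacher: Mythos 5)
Your proof of parts (1) and (2b)--(2f) follows the paper exactly: read off the additive structure from Theorem~\ref{Theorem RO(Q)graded abelian structure of hmq} together with Lemma~\ref{lemma homotopy fixed points hz}, and pin down the $u$- and $a$-relations via the Tate square, as in the $\hz$-case of Theorem~\ref{thm Coefficients of Hz}. Your verification of (e) through the square involving $f_0$ and $\epsilon_0$ and the derivation of (f) are correct and match the paper's intent (the paper says ``analogous to $\hz$'' and leaves them implicit).

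There is, however, a gap in your treatment of (2a). Your stated strategy --- ``show $\omega$ acts as zero on every nonzero $\ha^Q_{x+y\sigma}$ with $(x,y)\neq(0,0)$'' --- is false on the antidiagonal, where $\ha^Q_{2n-2n\sigma}\cong\mathbb{Z}$ and $\omega$ acts by multiplication by $\res_{\A}(\omega)=2$, not $0$. You in fact abandon this strategy for the antidiagonal and use parity instead, which is correct, but the mismatch between the announced plan and the execution should be cleaned up. More seriously, your case analysis organizes by the degree in which $\alpha\beta$ lands ($\mathbb{Z}/2$ regions, positive/negative $y$-axis, antidiagonal with $n\neq 0$) and then asserts ``all other products either land in one of the previous cases or vanish.'' This misses nonzero products landing at the origin $\ha^Q_0=\aq$, which is neither a $\mathbb{Z}/2$ group nor acted on trivially by $1-\omega$. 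For example, with $\tau\in\ha^Q_\sigma$ and $a\in\ha^Q_{-\sigma}$ one has $\tau a=\omega-2\neq 0$ in $\aq$, the sign unit is $(1-\omega)^{-1}=1-\omega$, and $1-\omega$ is not the identity on $\aq$. The argument is saved by the observation that all such products land in the ideal $\ker(\res_{\A})=(\omega-2)\subset\aq$, on which $\omega$ acts as $0$ because $\omega(\omega-2)=0$; but you need to say this. (The paper's own proof of Theorem~\ref{thm coeffs of ha}(2a) is also quite terse here --- it even contains the false claim that every nonzero entry has even fixed degree, which fails for odd $x\leq -3$ --- but the careful general argument is carried out later in the proof of Theorem~\ref{thm commutativity}, in particular cases (4) and (5) there.)
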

\begin{figure}[ht]
\begin{tikzpicture}[scale=0.7]
%Kratka
\draw[help lines, thin, lgray] (0,0) grid (20,20);
\draw[->] (0,10)--(20,10) ;
\draw[->] (10,0)--(10,20) ;

%Czerwone linie po lewej
\draw[red] (10,0)--(10,20);
\draw[red] (1,20)--(1,19);
\draw[red] (3,20)--(3,17);
\draw[red] (5,20)--(5,15);
\draw[red] (7,20)--(7,13);

%CZerwone linie po prawej
\draw[red] (12,8)--(12,0);
\draw[red] (14,0)--(14,6);
\draw[red] (16,0)--(16,4);
\draw[red] (18,0)--(18,2);

%Niebieskie linie
\draw[blue,dashed] (0,20)--(20,0);
\draw[blue, dashed] (1,20)--(7,14);
\draw[blue, dashed] (2,20)--(7,15);
\draw[blue, dashed] (3,20)--(7,16);
\draw[blue, dashed] (4,20)--(7,17);
\draw[blue,dashed] (5,20)--(7,18);

\foreach \x in {1,...,9}
\draw[blue,dashed] (10,\x)--(10+\x,0);

%Grube kropki
\foreach \x in {2,4,6,8,10}
\node[circle,fill,scale=0.5] at (10-\x,10+\x) {};
\foreach \x in {2,4,6,8}
\pgfmathsetmacro\result{\x/2}
\node[below left] at (10-\x,10+\x) {$\omega u^{-\pgfmathprintnumber{\result}}$};
\foreach \x in {2,4,6,8,10}
\node[circle,fill,scale=0.5] at (10+\x,10-\x) {};
\foreach \x in {2,4,6,8}
\pgfmathsetmacro\result{\x/2}
\node[above right] at (10+\x,10-\x) {$u^{\pgfmathprintnumber{\result}}$};
\foreach \x in {0,...,9}
\node[circle,fill,scale=0.5] at (10,\x) {};
\foreach \x in {1,...,9}
\node[above right] at (10,10-\x) {$a^\x$};
\foreach \x in {0,...,9}
\node[circle,fill,scale=0.5] at (10,20-\x) {};
\foreach \x in {1,...,7}
\node[above right] at (10,11+\x) {$\tau a^{-\x}$};
\node[above right] at (10, 11) {$\tau$};

%Pustekropki w górę
\foreach \x in {0,...,1}
\node[circle, draw, fill=white, scale=0.3] at (1,20-\x) {};
\foreach \x in {0,...,3}
\node[circle, draw, fill=white, scale=0.3] at (3,20-\x) {};
\foreach \x in {0,...,5}
\node[circle, draw, fill=white, scale=0.3] at (5,20-\x) {};
\foreach \x in {0,...,7}
\node[circle, draw, fill=white, scale=0.3] at (7,20-\x) {};

%Pustekropki w dół
\foreach \x in {0,...,1}
\node[circle, draw, fill=white, scale=0.3] at (18,\x) {};
\foreach \x in {0,...,3}
\node[circle, draw, fill=white, scale=0.3] at (16,\x) {};
\foreach \x in {0,...,5}
\node[circle, draw, fill=white, scale=0.3] at (14,\x) {};
\foreach \x in {0,...,7}
\node[circle, draw, fill=white, scale=0.3] at (12,\x) {};

%Diamencik
\node[diamond, fill,scale=0.7] at (10,10) {};

\node[above left] at (20,10) {$x\cdot 1$};
\node[below left] at (10,20) {$y\cdot \sigma$};

\node[scale=1.5] at (10,21) {$\ha^Q_{x+y\sigma}$};

\draw [decorate,decoration={brace,amplitude=7pt},xshift=-6pt,yshift=0pt, thick]
(10,0) -- (10,9) node [black,midway,xshift=-1.7cm] 
{$\coker(\res)\cong\mathbb{Z}$};
\draw [decorate,decoration={brace,amplitude=7pt,mirror},xshift=33pt,yshift=0pt, thick]
(10,11) -- (10,20) node [black,midway,xshift=1cm] 
{$(\omega-2)$};

\node at (0.5,-1) {Key:};
\node[diamond, fill,scale=0.7] at (3,-1) {};
\node at (4,-1) {$A(Q)$};
\node[circle, fill,scale=0.5] at (6,-1) {};
\node at (6.7,-1) {$\mathbb{Z}$};
\node[circle, draw, fill=white, scale=0.3] at (9,-1) {};
\node at (9.7,-1) {$\mathbb{Z}/2$};
\end{tikzpicture}
\caption{Coefficients of $\ha$}
\label{figure coeffs of ha}
\end{figure} 
\begin{proof}
We provide here only the proof of points 1 and 2a, since the rest is completely analogous to the case of $\hz$ given in Theorem \ref{thm Coefficients of Hz}.
\begin{enumerate}
\item If $-1\leq x\leq 1$ then the statement comes from the following calculations:
\[
\begin{array}{l}
\ker(\res_{\A})=(\omega-2)\cong \mathbb{Z}\textrm{ as an abelian group}\\
\coker(\res_{\A})=0 \\
\ker(\tr_{\A})=0 \\
\coker(\tr_{\A})=\mathbb{Z}.
\end{array}
\]
The rest follows from Lemma \ref{lemma homotopy fixed points hz} in the same way as in the proof of Theorem \ref{thm Coefficients of Hz}.
\item
\begin{enumerate}
\item Firstly note that $\A$ is a Green functor, so $\ha$ is a commutative ring $Q$-spectrum. Recall the graded commutativity rule from the proof of Theorem \ref{thm Coefficients of Hz}, part 2a: if $\alpha\in \ha^Q_{x+y\sigma}$ and $\beta\in \ha^Q_{x'+y'\sigma}$ then 
\[
\alpha\beta=(-1)^{xx'}(1-\omega)^{yy'}\beta\alpha.
\]

Since all non-zero entries have even fixed degree, the first unit is always $1$. So we need to show that $1-\omega$ also acts as $1$ in all cases. To this end we need to show that this claim holds only on the antidiagonal and on the $y$-axis, as all other non-zero entries are $\mathbb{Z}/2$.

On the antidiagonal all non-zero entries are in even twisted degrees and so $1-\omega$ acts as $1$. For $y>0$ we have that $\ha^Q_{y\sigma}=\ker(\res_{\A})=(\omega-2)$. Multiplication by $\omega$ on this ideal gives zero, since $\omega(\omega-2)=\omega^2-2\omega=0$ in $\mathbb{A}(Q)$. So $1-\omega$ acts as $1$ on $\ha^Q_{y\sigma}$ if $y>0$. Finally, let $y<0$. In this case we have that $\ha^Q_{y\sigma}=\coker(\tr_{\A})=\mathbb{A}(Q)/\omega$, so $\omega$ acts as $0$ and $1-\omega$ as $1$.
\end{enumerate}
\end{enumerate}
\end{proof}
\section{Periodicity in the antidiagonal direction}
\label{sec multiplication by u}
Sections \ref{sec The Tate method} and \ref{sec Examples} suggest that there exist some patterns in the coefficients of Eilenberg-MacLane $Q$-spectra. One of them is a repetition along the vertical lines, which by Lemma \ref{lemma multiplication by a} we can attribute to multiplication by $a$. The other one may be seen in the antidiagonal direction; we are going to describe it in this section.

Let $u$ be the generator of $\ha^Q_{2-2\sigma}=\ha^{hQ}_{2-2\sigma}\cong\mathbb{Z}$.
%
%Theorem-multiplication by u
%
\begin{theorem}
\label{thm multiplication by u}
For any Mackey functor $\M$ the map $u\colon\hm^Q_{x+y\sigma}\to\hm^Q_{(x+2)+(y-2)\sigma}$ is:
\begin{enumerate}
\item the map induced by the transfer $V_Q\to\M(Q/Q)$ if $x=-2$ and $y=2$;
\item the map induced by the transfer $\prescript{}{N}V/(1-\gamma)V\to\ker(\resm)$ if $x=-2$ and $y>2$;
\item the restriction map $\M(Q/Q)\to V^Q$ if $x=y=0$;
\item the map induced by the restriction $\coker(\trm)\to V^Q/NV$ if $x=0$ and $y<0$;
\item the inclusion $\ker(\trm)\to \prescript{}{N}V$ if $x=1$ and $y=-1$;
\item the map $\ker(\trm)_Q\to\prescript{}{N}V/(1-\gamma)V$ induced by the inclusion from Point 4 if $x=1$ and $y<-1$;
\item multiplication by $1-\gamma$ if $x=-1$ and $y=1$;
\item the zero map if $x=-1$ and $y>1$;
\item the projection $V/NV\to\coker(\resm)$ if $x=-3$ and $y=3$;
\item the projection $V^Q/NV\to V^Q/\im(\resm)$ induced by projection from Point 9 if $x=-3$ and $y>3$;
\item the zero map if $x<0$ and $y<-x$ or $x\geq 0$ and $y>-x$;
\item an isomorphism otherwise.
\end{enumerate}
\end{theorem}
Before proving this theorem we give a couple of preparatory lemmas. Note that since $\hm$ is a module over $\ha$ we have that $\hm^h$ is a module over $\ha^h$ (see \cite[Proposition 3.5]{MR1230773}). The equivalence \[\epsilon\colon EQ_+\wedge X\to EQ_+\wedge F(EQ_+,X)\] gives $\hm_h$ an $\ha^h$-module structure in the homotopy category by the following composite:
\[
\begin{tikzcd}[column sep=large]
EQ_+\wedge F(EQ_+,\ha)\wedge\hm\rar["\bar{\epsilon}\wedge\hm"]&
EQ_+\wedge\ha\wedge\hm\rar&
EQ_+\wedge\hm. 
\end{tikzcd}
\]
Here $\bar{\epsilon}$ is the inverse of $\epsilon$ in the homotopy category. The domain of the composite is $\ha^h\wedge\hm_h$ after applying the appropriate twist map.
\begin{lemma}
\label{lemma ho hfp are uperiodic}
For every Mackey functor $\M$ the modules $\hm^{hQ}_\star$ and $\ho{\hm}{\star}$ are $u$-periodic, i.e., $u$ acts as a unit. 
\end{lemma}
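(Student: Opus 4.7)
The plan is to reduce the lemma to a single observation: that $u$ is a unit in the ring $\ha^{hQ}_\star$. The key point is that since every Mackey functor is a module over the Burnside Mackey functor $\A$, the spectrum $\hm$ is an $\ha$-module, and applying $F(EQ_+,-)$ turns $\hm^h$ into an $\ha^h$-module by \cite[Proposition 3.5]{MR1230773}. This makes $\hm^{hQ}_\star$ into a graded module over $\ha^{hQ}_\star$, and multiplication by $u \in \ha^{hQ}_{2-2\sigma}$ is precisely the map whose invertibility we want. For homotopy orbits, the composite displayed just before the statement exhibits $\hm_h$ as an $\ha^h$-module in the homotopy category, so $\ho{\hm}{\star}$ is an $\ha^{hQ}_\star$-module, and again multiplication by $u$ is the desired map. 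Once $u$ is a unit in $\ha^{hQ}_\star$, it will act invertibly on any module over this ring, and the lemma follows.

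To verify that $u$ is a unit in $\ha^{hQ}_\star$, I would compare with the constant case. The $Q$-spectra $\ha^h = F(EQ_+,\ha)$ and $\hz^h = F(EQ_+,\hz)$ are equivalent as ring $Q$-spectra: their underlying non-equivariant ring spectra coincide (both are the Eilenberg-MacLane ring spectrum of $\mathbb{Z}$ with trivial $Q$-action), and Borel completion $F(EQ_+,-)$ depends only on the underlying non-equivariant ring spectrum together with its $Q$-action. This yields a ring isomorphism $\ha^{hQ}_\star \cong \hz^{hQ}_\star$. By Lemma \ref{lemma homotopy fixed points hz}, $\hz^{hQ}_\star = BB[u,u^{-1}]$, so $u$ is a unit in $\hz^{hQ}_\star$, and pulling back along the isomorphism gives that $u$ is a unit in $\ha^{hQ}_\star$.

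The main obstacle I anticipate is carefully matching up the element labelled $u$ on each side: on $\ha^{hQ}_{2-2\sigma}$ the element $u$ is defined as a chosen generator of $\mathbb{Z}$, and one must check that the comparison isomorphism sends this generator to the generator also named $u$ in Lemma \ref{lemma homotopy fixed points hz}. However, a ring isomorphism between infinite cyclic groups can at worst introduce a sign on a generator, which does not affect being a unit, so this is bookkeeping rather than a genuine difficulty. If a comparison-free argument is preferred, one can instead run the multiplicative homotopy fixed points spectral sequence for $\ha$ directly: on the collapsed $E_2 = E_\infty$ page the $(2-2\sigma)$-periodicity reflecting the $2$-periodicity of $H^*(Q;\mathbb{Z})$ is realised by a polynomial generator in degree $2-2\sigma$ with inverse in degree $-2+2\sigma$, exhibiting $u$ as a unit of $\ha^{hQ}_\star$ without any reference to $\hz$.
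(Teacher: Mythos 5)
Your proof is correct and takes essentially the same approach as the paper: both reduce to the fact that $\hm^{hQ}_\star$ and $\ho{\hm}{\star}$ are modules over $\ha^{hQ}_\star$ in which $u$ is invertible, with the invertibility of $u$ in $\ha^{hQ}_\star$ coming from the comparison with $\hz^{hQ}_\star$ described in Lemma \ref{lemma homotopy fixed points hz}. The one genuine divergence is how the action of $u$ on $\hm^{hQ}_\star$ is identified: the paper invokes \cite[Proposition 8.4]{MR1230773} to identify the pairing $\ha^{hQ}_\star\otimes\hm^{hQ}_\star\to\hm^{hQ}_\star$ with the cup-product pairing $H^*(Q;\mathbb{Z})\otimes H^*(Q;V)\to H^*(Q;V)$ in group cohomology and then notes that a generator of $H^0(Q;\mathbb{Z})\cong\mathbb{Z}$ necessarily acts by isomorphisms, whereas you sidestep this explicit identification by the purely algebraic remark that a unit in a graded ring acts invertibly on every graded module. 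Your route is a little cleaner for this particular lemma, while the paper's explicit identification of the pairing is useful context for the surrounding computations. One small point of care: your statement that $\ha^h\simeq\hz^h$ as ring $Q$-spectra ``because Borel completion depends only on the underlying nonequivariant ring spectrum'' should really be phrased via a comparison map, namely the linearization map of Green functors $\A\to\underline{\mathbb{Z}}$, which induces a ring map $\ha\to\hz$ that is a nonequivariant equivalence; Lemma \ref{prop_GM 1.2} then gives the $Q$-equivalence of Borel completions. This is exactly the identification the paper uses in Section \ref{sec Examples}, and your anticipated concern about a possible sign on the generator is indeed harmless, so there is no gap.
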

\begin{proof}
By Lemma \ref{lemma homotopy fixed points hz} this is true for $\ha$. 

By \cite[Proposition 8.4]{MR1230773} the pairing
\[\begin{array}{c}
\ha^h\wedge\hm^h\to\hm^h
\end{array}
\] gives the pairing in the group cohomology:
\[
H^*(Q;\mathbb{Z})\otimes H^*(Q;V)\to H^*(Q;V)
\]
Since $\ha^{hQ}_{2-2\sigma}=H^0(Q;\mathbb{Z})\cong\mathbb{Z}$ and $u$ is a generator, the statement follows for $\hm^{hQ}_\star$. By the discussion preceding the lemma $\ho{\hm}{\star}$ is a module over $\ha^{hQ}_\star$, so the second part follows analogously.  
\end{proof}

Note that if $X$ is a $Q$-spectrum, its homotopy groups $X^Q_\star$ and $X^e_\star$ form an $RO(Q)$-graded Mackey functor denoted by $X^\bullet_\star$. We investigate here the Mackey functor structure of two entries of $\hm^\bullet_\star$, namely $\hm^\bullet_{1-\sigma}$ and $\hm^\bullet_{\sigma-1}$. Recall the notation $\tilde{V}=\tilde{\mathbb{Z}}\otimes V$.
\begin{lemma}
\label{lemma hm^bullet_sigma-1}
The Mackey functor structure of $\hm^\bullet_{\sigma-1}$ is given by:
\[
\mackey{\coker(\resm)}{\tilde{V},}{N}{\pi}
\]
where $\pi$ is the map induced by projection of $V$ onto $\coker(\resm)$.
\end{lemma}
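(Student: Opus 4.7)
The plan is three steps: identify the two Mackey-functor levels, extract the transfer from a long exact sequence, and pin down the restriction via the Frobenius relation.

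The $Q/Q$-level is $\coker(\resm)$ by Theorem~\ref{Theorem RO(Q)graded abelian structure of hmq}. The $Q/e$-level is $\pi^e_{\sigma-1}(\hm)$, which is $\pi_0(\res^Q_e\hm) = V$ as an abelian group; the conjugation $Q$-action on $[S^{\sigma-1},\hm]^e$ twists the standard $Q$-action on $V$ by the degree $(-1)$ with which $\gamma$ acts on $S^{\sigma-1}$, producing $\tilde{V}$.

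For the transfer I smash the cofibre sequence $Q_+ \to S^0 \to S^\sigma$ with $\hm$ and apply $\pi^Q_\sigma$. Since $\pi^e_\sigma(\hm) = 0$ and $\hm^Q_{-1} = 0$, the long exact sequence collapses to
\[
0 \to \ker(\resm) \to \M(Q/Q) \xrightarrow{\partial} \tilde{V} \xrightarrow{f_*} \coker(\resm) \to 0.
\]
The map $f_*$ is induced by the collapse $Q_+ \wedge \hm \to \hm$ and, via the standard identification $\pi^Q_V(Q_+\wedge\hm) \cong \pi^e_V(\hm)$ coming from the induction--restriction adjunction, realises the Mackey transfer (exactly as in the proof of Proposition~\ref{prop epsilon_0 f_0}). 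Exactness then forces $\tr \colon \tilde{V} \to \coker(\resm)$ to be the canonical projection $\pi$.

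For the restriction I apply the Frobenius (double-coset) identity $\res \circ \tr = N$ which holds in every $Q$-Mackey functor. On $\tilde{V}$ the norm $N = 1 + \gamma_{\tilde{V}}$ acts as $1 - \gamma$ on the underlying $V$. Combined with $\tr = \pi$ and the surjectivity of $\pi$, this forces $\res([v]) = (1-\gamma) v$, which is precisely $N$ in the $\tilde{V}$-convention; well-definedness is automatic because $\im(\resm) \subseteq V^Q$ is annihilated by $1-\gamma$. The only subtle point in the whole argument is the identification of $f_*$ with the Mackey transfer (and, if one wants to name it, of $\partial$ with $\resm$); both are standard unwindings of the induction--restriction adjunction applied to the cells of $S^\sigma$, but they deserve to be recorded carefully.
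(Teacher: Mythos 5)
Your proof is correct and follows essentially the same route as the paper's: identify the two levels from Theorem~\ref{Theorem RO(Q)graded abelian structure of hmq} and the untwisting of $Q_+\wedge S^{\sigma-1}$, read the transfer off the long exact sequence obtained from $Q_+\to S^0\to S^\sigma$ smashed with $\hm$, and then recover the restriction from the Frobenius relation $\res\circ\tr=N$ together with surjectivity of the transfer. The caveat you flag at the end (identifying $f_*$ with the Mackey transfer via the induction--restriction adjunction, as in Proposition~\ref{prop epsilon_0 f_0}) is the same point the paper glosses with the phrase ``where $\tr$ denotes the transfer in $\hm^\bullet_{\sigma-1}$.''
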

\begin{proof}
By Proposition \ref{prop isomorphism from GM} we have that:
\begin{align*}
\hm^e_{\sigma-1}&=[Q_+\wedge S^{\sigma-1},\hm]^Q\\
&\cong [Q_+\wedge S^{-1}, F(S^\sigma,\hm)]^Q\\
&\cong\Hom_{\mathbb{Z}[Q]}\left(H_{-1}(Q_+\wedge S^{-1}),\pi_{-1}(F(S^\sigma,\hm))\right)\\
&\cong H^1(S^\sigma, V)\cong\tilde{V}.
\end{align*} 
The last isomorphism comes from Remark \ref{rem cohomology of S^nsigma}.

From Theorem \ref{Theorem RO(Q)graded abelian structure of hmq} we get that $\hm^Q_{\sigma-1}=\coker(\resm)$.

Recall the cofibre sequence \eqref{cofibre} from Section \ref{sec structure of hm y axis}:
\[
\begin{tikzcd}
Q_+\rar& S^0\rar& S^\sigma.
\end{tikzcd}
\]
After smashing this sequence with $\hm$ and applying $[S^{\sigma-1},-]^Q$ we obtain the following exact sequence of abelian groups, where $\tr$ denotes the transfer in $\hm^\bullet_{\sigma-1}$:
\[
\begin{tikzcd}
\hm^e_{\sigma}\rar&\hm^Q_\sigma\rar&\hm^Q_0\rar&\hm^e_{\sigma-1}\rar["\tr"] & \hm^Q_{\sigma-1} \rar & \hm^Q_{-1}.
\end{tikzcd}
\]
The outer terms are zero, so from this exact sequence and Theorem \ref{Theorem RO(Q)graded abelian structure of hmq} we can read that the underlying map of abelian groups of $\tr$ is the projection of $V$ onto $\coker(\resm)$.

Let $\tilde{x}\in \tilde{V}$ and $x\in V$ be its underlying element in $V$. Then the transfer of $\hm^\bullet_{\sigma-1}$ is given by 
\[
\tilde{x}\mapsto x+\im(\resm).
\]
Note that this map satisfies the condition $\tr(\tilde{x})=\tr(\gamma \tilde{x})$ for transfer in a Mackey functor.

Finally, by the definition of a Mackey functor we have that $\res(\tr(\tilde{x}))=N\tilde{x}$. Thus the restriction in $\hm^Q_{\sigma-1}$ has to be the map
\[
x+\im(\resm)\mapsto N\tilde{x}.
\]
An easy calculation shows that this map is well-defined and satisfies the required properties.
\end{proof}
\begin{lemma}
\label{lemma hm^bullet_1-sigma}
The Mackey functor structure of $\hm^\bullet_{1-\sigma}$ is given by:
\[
\mackey{\ker(\trm)}{\tilde{V},}{i}{N}
\]
where $i$ is the inclusion of $\ker(\trm)$ in $(\tilde{V})^Q=\prescript{}{N}V$.
\end{lemma}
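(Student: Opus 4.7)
The plan is to mirror the argument given for $\hm^\bullet_{\sigma-1}$ in Lemma \ref{lemma hm^bullet_sigma-1}, replacing the smashed cofibre sequence $Q_+\wedge\hm\to\hm\to S^\sigma\wedge\hm$ by its function-space dual $F(S^\sigma,\hm)\to\hm\to F(Q_+,\hm)$, which is obtained by applying $F(-,\hm)$ to \eqref{cofibre}. The two underlying groups are immediate: $\hm^Q_{1-\sigma}=\ker(\trm)$ follows from Theorem \ref{Theorem RO(Q)graded abelian structure of hmq}(2) with $y=-1$, and $\hm^e_{1-\sigma}=\tilde V$ is obtained from Proposition \ref{prop isomorphism from GM} together with the untwisting homeomorphism $Q_+\wedge S^\sigma\cong Q_+\wedge S^1$, exactly as in the previous lemma.

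The bulk of the work is to pin down the restriction. Applying $\pi^Q_{1-\sigma}$ to the function-space cofibre sequence yields the exact sequence
\[
\hm^Q_1\to\hm^Q_{1-\sigma}\xrightarrow{\res}\hm^e_{1-\sigma}\xrightarrow{\partial}\hm^Q_0\xrightarrow{a}\hm^Q_{-\sigma},
\]
in which the leftmost term vanishes by the definition of an Eilenberg-MacLane spectrum and the rightmost $a$ is the projection $\M(Q/Q)\twoheadrightarrow\coker(\trm)$ of Proposition \ref{prop groups hmq for x=0}. Exactness already forces $\res$ to be injective. I would then identify the connecting homomorphism $\partial\colon\tilde V\to\M(Q/Q)$ with the map $\tilde x\mapsto\trm(x)$ by a naturality argument combining the explicit description of $f_0$ from Proposition \ref{prop epsilon_0 f_0} with the untwisting used above. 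Its kernel, viewed as a subset of the underlying abelian group $V$, is precisely $\ker(\trm)$, so the image of $\res$ is this canonical copy; Weyl-equivariance then automatically places the image inside $(\tilde V)^Q=\prescript{}{N}V$. The containment $\ker(\trm)\subseteq\prescript{}{N}V$ itself follows from the Mackey axiom $\resm\circ\trm=N$ applied to $k\in\ker(\trm)$, which gives $Nk=\resm(\trm k)=0$.

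The transfer is then forced by the Mackey axiom $\res\circ\tr=1+\gamma$, which on $\tilde V$ (with its twisted Weyl action $\gamma\cdot\tilde x=-\widetilde{\gamma x}$) computes to $\tilde x\mapsto\widetilde{(1-\gamma)x}$. Since $\res$ is the inclusion, one reads off $\tr(\tilde x)=(1-\gamma)x\in\ker(\trm)$, which is exactly the twisted norm $N$ of the statement. A consistency check via $\tr\circ\res=2$ on $k\in\ker(\trm)\subseteq\prescript{}{N}V$ reduces to $(1-\gamma)k=2k$, which holds because $\gamma$ acts as $-1$ on $\prescript{}{N}V$. The delicate step I expect is the explicit identification of the connecting map $\partial$; once this is done, the rest of the argument is pure Mackey-axiom manipulation and mirrors the dual lemma step for step.
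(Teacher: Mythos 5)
Your proposal is correct and follows exactly the route the paper intends: the published proof merely says ``analogous to Lemma \ref{lemma hm^bullet_sigma-1}'' together with the identification $(\tilde V)^Q=\prescript{}{N}V$, and you have carried out that dualization in full, applying $F(-,\hm)$ to the cofibre sequence \eqref{cofibre} in place of smashing, reading off the restriction from the resulting exact sequence and recovering the transfer from the double-coset axiom $\res\circ\tr=N$. One small caveat: $\tr\circ\res=2$ is not a general Mackey-functor axiom (for $\A$ this composite is multiplication by $\omega$), though your computation that it happens to hold for the specific Mackey functor $\hm^\bullet_{1-\sigma}$ is correct, so the consistency check is harmless.
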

\begin{proof}
This is analogous to the proof of Lemma \ref{lemma hm^bullet_sigma-1}. Note that the identification $(\tilde{V})^Q=\prescript{}{N}V$ follows from the following calculation: $\tilde{x}\in (\tilde{V})^Q$ if and only if $\tilde{x}-\gamma\tilde{x}=0$. But the last equality on the underlying element in $V$ gives that $x+\gamma x=0$, i.e., $x\in\prescript{}{N}V$.
\end{proof}

The following generalisation of Proposition \ref{prop epsilon_0 f_0} will be also of use here:
\begin{prop}
\label{prop f_V epsilon_V}
The map $f_V\colon\left(\hm_{hQ}\right)_V\to\hm^Q_V$ is the map induced by the transfer in the Mackey functor $\hm^\bullet_V$. The map $\epsilon_V\colon \hm^Q_V\to\hm^{hQ}_V$ is the map induced by the restriction in the Mackey functor $\hm^\bullet_V$.
\end{prop}
\begin{proof}
Follows analogously to the proof of Proposition \ref{prop epsilon_0 f_0}.
\end{proof}
%
%Proof of multiplication by u
%
\begin{proof}[Proof of Theorem \ref{thm multiplication by u}]
\leavevmode
\begin{enumerate}
%Punkt1
\item
Multiplication by $u$ on $\ho{\hm}{\star}$ and $\hm^Q_\star$ gives us the following commutative diagram:
\[
\begin{tikzcd}
\left(\hm_{hQ}\right)_{y\sigma-2} \rar["u"]\dar["f_{y\sigma-2}"] & \left(\hm_{hQ}\right)_{(y-2)\sigma}\dar["f_{(y-2)\sigma}"] \\
\hm^Q_{y\sigma-2}\rar["u"] & \hm^Q_{(y-2)\sigma}.
\end{tikzcd}
\]
The left vertical arrow is an isomorphism by Theorem \ref{Theorem RO(Q)graded abelian structure of hmq} and the top arrow is an isomorphism by Lemma \ref{lemma ho hfp are uperiodic}. Thus the right vertical arrow is up to isomorphism the same as the bottom arrow.
 
If $y=2$, the right vertical arrow is the map $f_0$. Thus by Proposition \ref{prop epsilon_0 f_0} the bottom arrow is the map induced by transfer on $V_Q$.
%Pkt2
\item We will use the same commutative diagram as in Point 1. If $y>2$, the right vertical arrow is the map $f_{(y-2)\sigma}$. By Corollary \ref{cor fysigma for y>1}, this is the map $\prescript{}{N}V/(1-\gamma)V\to\ker(\resm)$ induced by $\trm$. Similarly as in Point 1, the right vertical arrow is the same as the bottom arrow, which is multiplication by $u$.
%Punkt 3
\item Follows analogously to Point 1 by considering the map $\epsilon\colon\hm\to\hm^h$.
%Punkt 4
\item By Lemma \ref{lemma multiplication by a} we need to prove the statement only for $y=-1$. By Theorem \ref{thm coeffs of ha} the ring $\ha^Q_\star$ is strictly commutative, so the following diagram is commutative:
\[
\begin{tikzcd}
\hm^Q_0\cong \M(Q/Q)\rar["u"]\dar["a"]&\hm^Q_{2-2\sigma}\cong V^Q\dar["a"]\\
\hm^Q_{-\sigma}\cong\coker(\trm)\rar["u"]&\hm^Q_{2-3\sigma}\cong V^Q/NV.
\end{tikzcd}
\]
By Lemma \ref{lemma multiplication by a} the vertical arrows in this diagram are projections. From the previous point we get that the top arrow is the restriction map $\resm$, thus the bottom arrow is the map induced by the restriction $\coker(\trm)\to V^Q/NV$.
%Pkt5
\item Note that three corners of the diagram displaying multiplication by $u$ are isomorphic:
\[
\begin{tikzcd}
\hm^Q_{1-\sigma}\dar["\epsilon_{1-\sigma}"]\rar["u"] &\hm^Q_{3-3\sigma}\dar["\epsilon_{3-3\sigma}"] \\
\hm^{hQ}_{1-\sigma}\rar["u","\cong"'] & \hm^{hQ}_{3-3\sigma}. 
\end{tikzcd}
\]
The right vertical arrow is an isomorphism by Theorem \ref{Theorem RO(Q)graded abelian structure of hmq} and the bottom arrow is an isomorphism by Lemma \ref{lemma ho hfp are uperiodic}. Thus the top arrow is the same as the left vertical arrow, which by Proposition \ref{prop f_V epsilon_V} is induced by the restriction of the Mackey functor $\hm^\bullet_{1-\sigma}$. So it is the inclusion $\ker(\trm)\to (\tilde{V})^Q=\prescript{}{N}V$.

%Punkt6
\item By the same argument as in Point 5 we have that $u\colon\hm^Q_{1+y\sigma}\to\hm^{Q}_{3+(y-2)\sigma}$ can be identified with $\epsilon_{1+y\sigma}\colon \hm^Q_{1+y\sigma}\to\hm^{hQ}_{1+y\sigma}$. To identify this map, we use the following diagram:
\[
\label{*}
\begin{tikzcd}
\hm^Q_{1+y\sigma}\dar["\epsilon_{1+y\sigma}"] \rar &\hm^{\Phi Q}_{1+y\sigma}\dar["\zeta"] \\
\hm^{hQ}_{1+y\sigma}\rar & \hm^{tQ}_{1+y\sigma}.
\end{tikzcd}
\tag{$\ast$}
\]
If we apply the long exact sequence in homotopy to the top and bottom rows of the Tate diagram we obtain:
\[
\begin{tikzcd}
\ldots\rar&\ho{\hm}{1+y\sigma}\rar&\hm^Q_{1+y\sigma}\rar&\hm^{\Phi Q}_{1+y\sigma}\rar& \ho{\hm}{y\sigma}\rar&\ldots
\end{tikzcd}
\]
and
\[
\begin{tikzcd}
\ldots\rar&\ho{\hm}{1+y\sigma}\rar&\hm^{hQ}_{1+y\sigma}\rar&\hm^{t Q}_{1+y\sigma}\rar& \ho{\hm}{y\sigma}\rar&\ldots.
\end{tikzcd}
\]
Since $\ho{\hm}{1+y\sigma}=\ho{\hm}{y\sigma}=0$ by Proposition \ref{prop coefficients of hfp and ho}, the top and bottom horizontal arrows in the diagram \eqref{*} are isomorphisms. Thus $u$ acts on $\hm^Q_{1+y\sigma}$ in the same way as $\zeta$, so we need to identify this map.

Note that by $a$-periodicity of $\hm^\Phi$ and $\hm^t$ (see Lemma \ref{lemma_a periodicity of geometric fp and tate}) it is enough to identify this map for $y=0$. Recall from the proof of Lemma \ref{lemma gfp iso to tate} that the fibre $F$ of the map $\epsilon_t\colon\hm^\Phi\to\hm^t$ is $0$-coconnective, in particular $F^Q_1=0$. Thus by Proposition \ref{prop coefficients of Tate} and Theorem \ref{thm coefficients of geometric fixed points} we have that $\zeta$ is an inclusion
\[
\ker(\trm)_Q\hookrightarrow\prescript{}{N}V/(1-\gamma)V.
\]
%Pt 7
\item The proof of this point needs the following diagram:
\[
\begin{tikzcd}
\hm^Q_{\sigma-1}\dar["\epsilon_{\sigma-1}"]\rar["u"] &\hm^Q_{1-\sigma}\dar["\epsilon_{1-\sigma}"] \\
\hm^{hQ}_{\sigma-1}\rar["u","\cong"'] & \hm^{hQ}_{1-\sigma}. 
\end{tikzcd}
\]
By Proposition \ref{prop f_V epsilon_V} the left vertical map $\epsilon_{\sigma-1}$ is the map induced by the restriction of the Mackey functor $\hm^\bullet_{\sigma-1}$. From Theorem \ref{Theorem RO(Q)graded abelian structure of hmq} and Lemma \ref{lemma hm^bullet_sigma-1} we get that this is the map $\coker(\resm)\to (\tilde{V})^Q=\prescript{}{N}V$. This map can be identified with the multiplication by $1+\sigma$ in $\tilde{V}$, thus the multiplication by $1-\gamma$ in $V$. 

Similarly, by Proposition \ref{prop f_V epsilon_V} the right vertical map is induced by the restriction of the Mackey functor $\hm^\bullet_{1-\sigma}$, so by Lemma \ref{lemma hm^bullet_1-sigma} it is an inclusion $\ker(\resm)\to\prescript{}{N}V$. Since by Lemma \ref{lemma ho hfp are uperiodic} the bottom arrow is an isomorphism, the top arrow is a multiplication by $1-\gamma$.
%Pt8
\item By Lemma \ref{lemma general shape} the codomain of $u\colon\hm^Q_{y\sigma-1}\to\hm^Q_{(y-2)\sigma+1}$ is zero when $y>1$.
%Pt9
\item We consider the following commutative diagram:
\[
\begin{tikzcd}
\left(\hm_{hQ}\right)_{y\sigma-3} \rar["u"]\dar & \left(\hm_{hQ}\right)_{(y-2)\sigma-1}\dar \\
\hm^Q_{y\sigma-3}\rar["u"] & \hm^Q_{(y-2)\sigma-1}.
\end{tikzcd}
\]
We use an analogous reasoning as in Point 1 to show that the right arrow is the same as the bottom arrow. 

If $y=3$, then by Proposition \ref{prop f_V epsilon_V} and Lemma \ref{lemma hm^bullet_sigma-1} the vertical left map is induced by the transfer in $\hm^\bullet_{\sigma-1}$. Therefore it is a projection $V/NV\to\coker(\resm)$.
%Pt9
\item For this point we consider the same diagram as in the previous point together with the observation that the bottom arrow is the same as the right vertical arrow. Thus our goal is to describe the right vertical arrow $f_{(y-2)\sigma-1}$ if $y>3$. Note that by Lemma \ref{lemma multiplication by a} we need only to prove the statement for $y=4$. 

By Lemma \ref{lemma multiplication by a} and Observation \ref{obs multiplication by a on ho and hfp} the following diagram is commutative with vertical maps being inclusions (compare with the proof of Corollary \ref{cor fysigma for y>1}):
\[
\begin{tikzcd}
\left(\hm_{hQ}\right)_{2\sigma-1}\rar["f_{2\sigma-1}"]\dar["a"]&\hm^Q_{2\sigma-1}\dar["a"]\\
\left(\hm_{hQ}\right)_{\sigma-1}\rar["f_{\sigma-1}"]&\hm^Q_{\sigma-1}.
\end{tikzcd}
\]
Therefore the top map is the restriction of the projection $V/NV\to\coker(\resm)$ to the domain $V^Q/NV$ and codomain $V^Q/\im(\resm)$.
%Pt11
\item Follows from Lemma \ref{lemma general shape}. In the case $x<0$ and $y<-x$ the multiplication by $u$ starts in the left half-plane below the antidiagonal where all entries are zero, therefore it is the zero map. If $x\geq0$ and $y>-x$, then the multiplication by $u$ lands in the right half-plane above the antidiagonal, where all entries are zero.
%Pt12
\item If $x\leq -2$ or $x\geq 2$ then by Theorem \ref{Theorem RO(Q)graded abelian structure of hmq} we have that $\hm^Q_{x+y\sigma}$ is isomorphic to respectively $\ho{\hm}{x+y\sigma}$ and $\hm^{hQ}_{x+y\sigma}$. Since by Lemma \ref{lemma ho hfp are uperiodic} both $\ho{\hm}{\star}$ and $\hm^{hQ}_\star$ are $u$-periodic, the statement follows.
\end{enumerate}
\end{proof}
\section{Commutativity}
\label{sec comm}
In Section \ref{sec Examples} we have seen that both examples share one feature - the coefficients of both spectra are strictly commutative rings, i.e., the sign coming from the swap of factors is always trivial. In this section we show that this happens for all Green functors. Throughout this section, let $\M$ be a Green functor. Recall that the class of $Q/e$ in $\aq$ is denoted by $\omega$.

\begin{observation}
\label{obs RO(Q)-graded ring}
If $\M$ is a Green functor, then $\hm^Q_\star$ is an $RO(Q)$-graded ring satisfying the following graded commutativity rule: if $\alpha\in\hm^Q_{x+y\sigma}$ and $\beta\in\hm^Q_{x'+y'\sigma}$ then
\[
\alpha\beta=(-1)^{xx'}(1-\omega)^{yy'}\beta\alpha.
\]
Therefore $\hm^Q_\star$ is an \emph{$RO(Q)$-graded commutative ring}.
\end{observation}
\begin{rem}
\label{rem graded comm rule}
Note that we already used the graded commutativity rule in the proofs of Theorems \ref{thm Coefficients of Hz} and \ref{thm coeffs of ha}. For details on the graded commutativity rule in equivariant homotopy theory see \cite[Section 6]{MR764596} or \cite[Lemma 2.12]{MR1808224} specifically for the case of $Q$.
\end{rem}
The Burnside ring $\aq$ acts on $\hm^Q_\star$ as a $0$-th $Q$-homotopy group of the sphere spectrum. By the associativity of a smash product this is given by the action on $\hm^Q_0$:
\[
\pi^Q_0(S^0)\otimes(\hm^Q_0\otimes\hm^Q_V)=(\pi^Q_0(S^0)\otimes\hm^Q_0)\otimes \hm^Q_V\to\hm^Q_V.
\]
By the definition of a box product of Mackey functors we have that $\omega$ acts on $\M(Q/Q)$ as $\trm(1)$:
\[
\omega\cdot 1=\tr_{\underline{A}}(1)\cdot 1=\tr_{\underline{A}\Box\M}(1\otimes(\res_{\M}(1)))=\tr_{\M}(1).
\]
We used here relations in the box product of Mackey functors. Details may be found in \cite[Section 1]{MR979507}. The last equality is obtained by the fact that the restriction in a Green functor is a ring homomorphism, so in particular it preserves the identity.

Recall that $V$ denotes $\M(Q/e)$. Note that if $\M$ is a Green functor then $V$ is a $\M(Q/Q)$-algebra with action given by $\resm$. From this we can deduce that if $v\in V$ then 
\[\omega v=\resm(\trm(1))v=2v,\]
i.e., $\omega$ acts on $V$ as multiplication by $2$. The last equality here comes from the fact that if $\M$ is a Green functor, then $\gamma$ needs to act on $V$ as a unitary ring homomorphism - thus $\resm(\trm(1))=(1+\gamma)1=2$.

\begin{theorem}
\label{thm commutativity}
If $\M$ is a Green functor, then $\hm^Q_\star$ is a strictly commutative ring.
\end{theorem}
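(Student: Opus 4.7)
\emph{Plan of proof.} The graded commutativity rule reads
\[
\alpha\beta = (-1)^{xx'}(1-\omega)^{yy'}\beta\alpha
\]
for $\alpha \in \hm^Q_{x+y\sigma}$ and $\beta \in \hm^Q_{x'+y'\sigma}$. Since $\omega^2 = 2\omega$ in $\aq$ forces $(1-\omega)^2 = 1$, the sign $s$ lies in the order-four group $\{1,-1,1-\omega,\omega-1\}$, and strict commutativity amounts to checking that $(s-1)\cdot\alpha\beta = 0$ in each of the three nontrivial parity cases for $(xx', yy')$.

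My plan is first to build a dictionary recording the action of $\omega$ on each non-zero entry of $\hm^Q_\star$ from Theorem~\ref{Theorem RO(Q)graded abelian structure of hmq}, and then to run a parity case analysis. From the discussion preceding the theorem, $\omega$ acts on $V = \M(Q/e)$ as multiplication by $2$ and on $\M(Q/Q)$ as $m \mapsto \trm(\resm(m))$. Propagating this: on the $y$-axis entries $\ker(\resm)$ and $\coker(\trm)$, Frobenius reciprocity forces $\omega$ to act as $0$; on entries arising as subquotients of $V$ (the antidiagonal terms $V^Q$, $V_Q$, $\prescript{}{N}V$, $V/(1+\gamma)V$, together with $\ker(\trm)$, $\coker(\resm)$ and their vertical-line extensions $\ker(\trm)_Q$, $V^Q/\im(\resm)$), $\omega$ acts as $2$; and on the positive-degree group (co)homology entries in the red and blue regions, the entries are $2$-torsion and $\omega$ acts as $0$. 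Moreover the two vertical-line entries $\ker(\trm)_Q$ and $V^Q/\im(\resm)$ are themselves $2$-torsion, since by Theorem~\ref{thm multiplication by u} they embed into or surject onto the Tate cohomology $\hat{H}^\ast(Q;V)$, which is $|Q|$-torsion.

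With this dictionary, two of the three nontrivial sign cases reduce to parity propagation. For $s = -1$ ($xx'$ odd, $yy'$ even) the odd-$x$ hypothesis forces $\alpha$ or $\beta$ to lie in a $2$-torsion entry (either a vertical-line entry or a group-(co)homology entry), giving $2\alpha\beta = 0$. For $s = 1-\omega$ ($xx'$ even, $yy'$ odd), an analogous odd-$y$ argument yields $\omega\alpha\beta = 0$. For $s = \omega-1$ (both odd) one needs $(\omega-2)\alpha\beta = 0$, which is immediate when $\alpha\beta$ lands in a $V$-subquotient (where $\omega$ acts as $2$) or in a $2$-torsion group-(co)homology entry (where $\omega$ acts as $0$ and $2\alpha\beta = 0$).

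The principal obstacle is the only remaining subcase of $s = \omega-1$: when the product $\alpha\beta$ lands in $\M(Q/Q) = \hm^Q_0$ itself, which happens precisely when $\alpha \in \hm^Q_{1-\sigma} = \ker(\trm)$ is multiplied by $\beta \in \hm^Q_{\sigma-1} = \coker(\resm)$, since on the target $\M(Q/Q)$ neither the $2$-torsion nor the $V$-subquotient arguments apply. To handle it I would unpack the Green-functor multiplication via the box product: using the explicit Mackey structures of $\hm^\bullet_{1-\sigma}$ and $\hm^\bullet_{\sigma-1}$ from Lemmas~\ref{lemma hm^bullet_1-sigma} and~\ref{lemma hm^bullet_sigma-1} together with the identity $(1+\gamma)\alpha = 0$ (valid since $\alpha \in \ker(\trm) \subseteq \prescript{}{N}V$) and the Green relation $\resm(\trm(1)) = 2$, one verifies that $\alpha\beta$ is forced into the image of $\trm\colon V \to \M(Q/Q)$, after which Frobenius reciprocity gives $\omega\cdot\trm(v) = \trm(\resm(\trm(1))\cdot v) = 2\trm(v)$ and hence $(\omega - 2)\alpha\beta = 0$ as required.
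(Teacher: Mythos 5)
Your plan decomposes by the parity of $(xx',yy')$, that is by which unit $s\in\{1,-1,1-\omega,\omega-1\}$ could appear, whereas the paper decomposes by the degree in which the \emph{product} $\alpha\beta$ lands. Both routes rest on the same facts (your ``dictionary'' is exactly the content of Lemmas~\ref{lemma 2-torsion}--\ref{lemma hmastsigma commutative}), but the proposal has a genuine gap in the $s=\omega-1$ branch, and the ``principal obstacle'' you identify there is not actually an obstacle.

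For $s=\omega-1$ you account for $\alpha\beta$ landing in a $V$-subquotient or at the origin, but you miss the sub-case where $\alpha\beta$ lands on the $y$-axis at $y\neq 0$, i.e.\ in $\ker(\resm)$ or $\coker(\trm)$, where $\omega$ acts as $0$ rather than as $2$: for instance $\alpha\in\hm^Q_{1-3\sigma}$ and $\beta\in\hm^Q_{\sigma-1}$ give $\alpha\beta\in\hm^Q_{-2\sigma}=\coker(\trm)$. Your description of the origin case is also too narrow; for example $\alpha\in\hm^Q_{3-3\sigma}\cong\prescript{}{N}V$ and $\beta\in\hm^Q_{3\sigma-3}$ also multiply into $\hm^Q_0$, and $\prescript{}{N}V$ need not be $2$-torsion, so the single box-product computation you perform does not cover all pairs hitting the origin.

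Both issues evaporate once you use the dictionary on the \emph{factors} together with associativity of the $\ha^Q_\star$-action: when $s=\omega-1$ both $x$ and $x'$ are odd, hence nonzero, so $\alpha$ already lies in a $V$-subquotient where $\omega\alpha=2\alpha$; then $\omega(\alpha\beta)=(\omega\alpha)\beta=2\alpha\beta$, so $(\omega-2)\alpha\beta=0$ no matter where $\alpha\beta$ lands. This is precisely the mechanism behind the paper's Lemma~\ref{lemma antidiagonal commutative}. Your Frobenius-reciprocity argument that $\alpha\beta\in\im(\trm)$ when $\alpha\in\ker(\trm)$ and $\beta\in\coker(\resm)$ is in fact correct (every element of $\hm^Q_{\sigma-1}$ is a Mackey transfer by Lemma~\ref{lemma hm^bullet_sigma-1}), but it is a more elaborate route to a special case of the statement above. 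One further small point: the claim that $\ker(\trm)_Q$ is $2$-torsion because it ``embeds into'' $\hat H^{-1}(Q;V)$ is not properly justified; the clean argument, used in Lemma~\ref{lemma 2-torsion}, is simply that $(1+\gamma)$ annihilates $\ker(\trm)$, so $\gamma=-1$ there, so $1-\gamma=2$ on $\ker(\trm)$ and the quotient $\ker(\trm)_Q$ is $2$-torsion.
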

Before giving a proof of Theorem \ref{thm commutativity} we prove a couple of preparatory lemmas. 

\begin{lemma}
\label{lemma 2-torsion}
For any Mackey functor $\M$ the groups $\hm^Q_{x+y\sigma}$ are $2$-torsion if $x\neq 0$ and $x\neq-y$.
\end{lemma}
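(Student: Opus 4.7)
The plan is a case-by-case verification guided by Theorem \ref{Theorem RO(Q)graded abelian structure of hmq}, using just one algebraic input: the double coset identity $\resm\circ\trm=1+\gamma$ on $V$, which holds for any Mackey functor. This identity implies two containments that will make every relevant subquotient annihilated by $2$, namely $\ker(\trm)\subseteq\prescript{}{N}V$ and $NV\subseteq\im(\resm)$.

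First, I would dispose of the range $|x|\geq 2$. For $x\geq 2$, Theorem \ref{Theorem RO(Q)graded abelian structure of hmq} identifies $\hm^Q_{x+y\sigma}$ with $\hm^{hQ}_{x+y\sigma}$, and for $x\leq -2$ with $\ho{\hm}{x+y\sigma}$. Propositions \ref{prop coefficients of hfp and ho} (cf.\ Figures \ref{picture_homotopyorbits} and \ref{picture_homotopy fixed points}) show that, away from the antidiagonal $y=-x$, these groups are all of the form $V^Q/NV$ or $\prescript{}{N}V/(1-\gamma)V$. Both are $2$-torsion: if $v\in V^Q$ then $2v=(1+\gamma)v=Nv\in NV$, and if $v\in\prescript{}{N}V$ then $Nv=0$ forces $\gamma v=-v$, whence $2v=(1-\gamma)v\in(1-\gamma)V$.

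Next I would treat the two columns $x=\pm 1$, where $y\neq\mp 1$ is the assumption. For $x=1$, Theorem \ref{Theorem RO(Q)graded abelian structure of hmq} says the only nonzero case is $y<-1$, giving $\ker(\trm)_Q=\ker(\trm)/(1-\gamma)V$. By $\resm\circ\trm=N$, any $v\in\ker(\trm)$ satisfies $Nv=\resm(\trm(v))=0$, so $v\in\prescript{}{N}V$ and hence $2v=(1-\gamma)v\in(1-\gamma)V$, which is zero in $\ker(\trm)_Q$. Dually, for $x=-1$, the only nonzero case with $y\neq 1$ is $y>1$, giving $V^Q/\im(\resm)$. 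Since $NV=\resm(\trm(V))\subseteq\im(\resm)$, for $v\in V^Q$ we have $2v=Nv\in NV\subseteq\im(\resm)$.

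In every remaining position with $x\neq 0$ and $y\neq -x$, Theorem \ref{Theorem RO(Q)graded abelian structure of hmq} gives $\hm^Q_{x+y\sigma}=0$, which is trivially $2$-torsion. I do not anticipate any real obstacle here; the only subtle step is remembering the Frobenius relation $\resm\circ\trm=N$, which is what converts the abstract kernel/cokernel descriptions into explicit submodules of $\prescript{}{N}V$ and supermodules of $NV$, in both cases making multiplication by $2$ factor through $(1\pm\gamma)V$.
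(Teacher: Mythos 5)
Your proof is correct and essentially identical to the paper's own argument: both split into the range $|x|\geq 2$, where $\hm^Q_{x+y\sigma}$ is identified via Theorem \ref{Theorem RO(Q)graded abelian structure of hmq} and Proposition \ref{prop coefficients of hfp and ho} with positive-degree group (co)homology of $Q$, and the two edge columns $x=\pm 1$, where the double coset relation $\resm\circ\trm=N$ on $V$ is the crucial input. The only cosmetic difference is that you verify the $2$-torsion of the Tate-type groups $V^Q/NV$ and $\prescript{}{N}V/(1-\gamma)V$ by the direct computations $2v=Nv$ and $2v=(1-\gamma)v$, whereas the paper cites the textbook fact that $H^p(Q;-)$ is $2$-torsion for $p>0$.
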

\begin{proof}
Note that if the conditions of Lemma \ref{lemma general shape} hold (i.e., $x+y\sigma$ lies below the antidiagonal on the half-plane $x<0$ or above the antidiagonal on the half-plane $x>0$) then $\hm^Q_{x+y\sigma}$ is zero, so the statement is trivially satisfied.

We will consider four cases depending on the value of $x$.
\begin{enumerate}
\item $x\geq 2$ and $y<-x$. Assume firstly that $y$ is even. By Theorem \ref{Theorem RO(Q)graded abelian structure of hmq} and Proposition \ref{prop coefficients of hfp and ho} we have that
\[
\hm^Q_{x+y\sigma}\cong H^{-x-y}(Q;V).
\]
Note that the group cohomology $H^p(Q;V)$ is $2$-torsion for $p>0$ (this can be easily deduced from \cite[Theorem 6.2.2]{MR1269324}). By the assumption $-x-y\geq 1$, so the statement holds. If $y$ is odd, we have that
\[
\hm^Q_{x+y\sigma}\cong H^{-x-y}(Q;\tilde{V}).
\]
Here we use the fact that $H^p(Q;\tilde{V})\cong H^{p+1}(Q;V)$ (see Section \ref{section hfp, ho and tate}). Thus the claim is proven in this case.
\item $x\leq -2$ and $y>-x$. In this case we proceed analogically to the previous point, using the fact that $\hm^Q_{x+y\sigma}$ is isomorphic to the group homology.
\item $x=1$ and $y<-x$. Then $\hm^Q_{1+y\sigma}\cong\ker(\trm)_Q$. Let $\alpha\in\ker(\trm)_Q$. Then $(1+\gamma)\alpha=\resm(\trm(\alpha))=0$, so $\alpha=-\gamma\alpha$. Thus $2\alpha=\alpha-\gamma\alpha=(1-\gamma)\alpha=0$.
\item $x=-1$ and $y>-x$. Then $\hm^Q_{y\sigma-1}\cong V^Q/\im(\resm)$. Let $\alpha\in V^Q/\im(\resm)$. Then $2\alpha=(1+\gamma)\alpha=\resm(\trm(\alpha))=0$.
\end{enumerate}
\end{proof}
\begin{lemma}
\label{lemma antidiagonal commutative}
The subring $\hm^Q_{\ast-\ast\sigma}$ is strictly commutative.
\end{lemma}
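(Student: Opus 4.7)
The plan is to reduce strict commutativity on the antidiagonal to the identity $\omega(\alpha\beta) = 2(\alpha\beta)$. For $\alpha \in \hm^Q_{x(1-\sigma)}$ and $\beta \in \hm^Q_{x'(1-\sigma)}$, the graded commutativity rule gives a sign of $(-1)^{xx'}(1-\omega)^{xx'} = (\omega-1)^{xx'}$, and since $\omega^2 = 2\omega$ in $\mathbb{A}(Q)$ one checks $(\omega-1)^2 = 1$. Consequently the sign is automatically $1$ whenever $xx'$ is even, and the entire content of the lemma concerns the case when both $x$ and $x'$ are odd: we must prove that $\omega$ acts on the product $\alpha\beta$ as multiplication by $2$.

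First I would use the Green functor hypothesis to note that the unit $\A\to\M$ sends $\omega$ to $\trm(1)$, and since $\resm$ is a unital ring map, $\resm(\omega) = N(1) = 2 \in V$. Via the ring map $\epsilon_{\ast}\colon\hm^Q_\star\to\hm^{hQ}_\star$ and the $\hm^h$-module structure on $\hm_h$ from Proposition~\ref{prop X_h is a module over X^h}, $\omega$ therefore acts as multiplication by $2$ throughout $\hm^{hQ}_\star$ and $\ho{\hm}{\star}$. Because $x+x'$ is even when $xx'$ is odd, Theorem~\ref{Theorem RO(Q)graded abelian structure of hmq} identifies $\hm^Q_{(x+x')(1-\sigma)}$ with $\hm^{hQ}_{(x+x')(1-\sigma)}$ whenever $x+x'\geq 2$ and with $\ho{\hm}{(x+x')(1-\sigma)}$ whenever $x+x'\leq -2$; in either case the identity $\omega(\alpha\beta) = 2(\alpha\beta)$ is immediate.

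The main obstacle is the remaining case $x+x'=0$, when $\alpha\beta\in\hm^Q_0=\M(Q/Q)$ and $\omega$ acts by the potentially nontrivial operator $\trm\circ\resm$. By symmetry, assume $x>0$. Then $\alpha$ lies in $\prescript{}{N}V$ (via $\ker\trm\subseteq\prescript{}{N}V$ when $x=1$, and via $\hm^Q_{x(1-\sigma)}\cong\prescript{}{N}V$ when $x\geq 3$), while $\beta$ is represented by some $v\in V$ lifting it along the surjection $V\twoheadrightarrow\coker(\resm)$ when $x=1$ or $V\twoheadrightarrow V/NV$ when $x\geq 3$. The key step is to combine Frobenius reciprocity for the Green functor $\M$ with the Mackey functor descriptions of $\hm^\bullet_{1-\sigma}$ and $\hm^\bullet_{\sigma-1}$ from Lemmas~\ref{lemma hm^bullet_1-sigma} and~\ref{lemma hm^bullet_sigma-1} (or, for $x\geq 3$, with the $\hm$-linearity of the map $f\colon\hm_h\to\hm$ together with Proposition~\ref{prop epsilon_0 f_0}) to identify
\[
\alpha\beta = \trm(\alpha v)\in\M(Q/Q).
\]

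Granted this identification, a short Frobenius manipulation closes the argument. Using $N\alpha=0$, which gives $\gamma\alpha=-\alpha$ and hence $\gamma(\alpha v)=-\alpha\gamma v$, together with the standard relation $\trm(\gamma y)=\trm(y)$, one computes
\[
\omega(\alpha\beta) = \trm\bigl(\resm(\trm(\alpha v))\bigr) = \trm\bigl((1+\gamma)(\alpha v)\bigr) = \trm(\alpha v) - \trm(\alpha\gamma v) = 2\,\trm(\alpha v) = 2(\alpha\beta),
\]
which is precisely the equality needed for strict commutativity. The most delicate technical point is the identification $\alpha\beta=\trm(\alpha v)$, which requires careful bookkeeping of the Mackey functor structures in the degrees involved.
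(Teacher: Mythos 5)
Your strategy and the paper's agree at the outset: the sign rule reduces to $(\omega-1)^{xx'}$, and a nontrivial sign can occur only when $x$ and $x'$ are both odd. The divergence is in \emph{where} you apply the Burnside-ring element $(1-\omega)^{xx'}$. You apply it to the product $\alpha\beta\in\hm^Q_{(x+x')(1-\sigma)}$, which forces a case split: when $x+x'\neq 0$ the target is a subquotient of $V$ and $\omega$ visibly acts as $2$, but when $x+x'=0$ the target is $\M(Q/Q)$ and you must show $\omega$ acts as $2$ on the \emph{image} of the multiplication map, which is exactly your Frobenius computation. The identification $\alpha\beta=\trm(\alpha v)$ that you flag as delicate is the real cost here: it is correct but needs Frobenius reciprocity in the $RO(Q)$-graded Green functor $\hm^\bullet_\star$, the untwisting identifications $\hm^e_{1-\sigma}\cong\hm^e_{\sigma-1}\cong\tilde{V}$, and the explicit restriction/transfer descriptions of Lemmas~\ref{lemma hm^bullet_1-sigma} and~\ref{lemma hm^bullet_sigma-1} (plus a separate argument for $x\ge 3$ via $f$, which you only sketch). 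The paper avoids this case entirely by moving $(1-\omega)^{xx'}$ onto a \emph{factor} rather than the product: multiplication in $\hm^Q_\star$ is $\aq$-bilinear, so $(1-\omega)^{xx'}(\beta\alpha)=\bigl((1-\omega)^{xx'}\beta\bigr)\alpha$, and since $x'$ is odd, hence nonzero, $\beta$ lies in a submodule or subquotient of $V$ on which $\omega$ already acts as $2$ (the restriction/transfer maps are $\aq$-linear, so the action is inherited from $V$). Then $(1-\omega)^{xx'}\beta=(-1)^{xx'}\beta$ and the sign cancels against $(-1)^{xx'}$ with no knowledge of $\M(Q/Q)$ required. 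The lesson worth extracting is that $\aq$-bilinearity lets you place the Burnside unit on whichever factor has predictable $\omega$-action, and choosing a factor of nonzero fixed degree makes the argument immediate even when the degree of the product is $0$.
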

\begin{proof}
Note that unless $x=0$ the groups $\hm^Q_{x-x\sigma}$ are submodules or subquotients of $V$, thus $\omega$ acts on them as $2$. 
Let $\alpha\in\hm^Q_{x-x\sigma}$ and $\beta\in\hm^Q_{x'-x'\sigma}$. By the sign rule the only possibility when a non-trivial sign might occur is when both $x$ and $x'$ are odd. In this case we have
\[
\alpha\beta=(-1)^{xx'}(1-2)^{xx'}\beta\alpha=\beta\alpha.
\]
\end{proof}
\begin{lemma}
\label{lemma hmastsigma commutative}
The subring $\hm^Q_{\ast\sigma}$ is strictly commutative.
\end{lemma}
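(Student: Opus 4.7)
The plan is to show that the two potential sources of non-triviality in the graded commutativity rule both vanish on this subring. Since every element of $\hm^Q_{\ast\sigma}$ has fixed degree $x = 0$, the factor $(-1)^{xx'}$ is automatically $1$, so the only obstruction is the Burnside-ring factor $(1-\omega)^{yy'}$.

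First I would observe that $1-\omega$ is its own inverse in $\aq$: using $\omega^2 = 2\omega$ we have $(1-\omega)^2 = 1 - 2\omega + \omega^2 = 1$. Consequently $(1-\omega)^{yy'}$ equals $1$ when $yy'$ is even and equals $1-\omega$ when $yy'$ is odd, so only the case where both $y$ and $y'$ are odd (in particular, both nonzero) remains.

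The key step is a lemma: $\omega$ annihilates $\hm^Q_{y\sigma}$ for every $y \neq 0$. Under the unit map $\aq \to \M(Q/Q)$ the element $\omega$ maps to $\trm(1)$, and the Green functor structure gives Frobenius reciprocity $\trm(1)\cdot\xi = \trm(\resm(\xi))$. For $y > 0$, Proposition \ref{prop groups hmq for x=0} identifies $\hm^Q_{y\sigma}$ with $\ker(\resm)$, on which this formula vanishes. For $y < 0$ the group is $\coker(\trm)$, and the same formula lands in $\im(\trm)$, hence is zero in the cokernel.

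To finish, assume $y$ and $y'$ are both odd and pick $\alpha \in \hm^Q_{y\sigma}$, $\beta \in \hm^Q_{y'\sigma}$. Because $\omega$ sits in bidegree $(0,0)$, the sign rule itself forces $\omega\xi = \xi\omega$ for every $\xi$, so $\omega$ is central in $\hm^Q_\star$. Associativity together with the key lemma then yields $\omega(\beta\alpha) = \beta(\omega\alpha) = 0$, and substituting back into $\alpha\beta = (1-\omega)\beta\alpha$ gives $\alpha\beta = \beta\alpha$. The only real subtlety is to keep the action of $\omega$ straight (acting via the unit map rather than by multiplication inside $\M(Q/Q)$), which is precisely where Frobenius reciprocity enters; beyond that the argument is purely formal manipulation of the sign rule.
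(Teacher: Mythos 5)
Your proof is correct and follows essentially the same route as the paper's: reduce to the Burnside factor $(1-\omega)^{yy'}$, then show $\omega$ kills $\hm^Q_{y\sigma}$ for $y\neq 0$ via Frobenius reciprocity applied to $\ker(\resm)$ and $\coker(\trm)$. Your additional remarks (that $1-\omega$ is an involution in $\aq$, that $\omega$ is central) are correct and make the argument a little more explicit, but they are not a genuinely different approach.
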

\begin{proof}
Since the fixed degree is zero we need only to check the possible sign coming from the multiplication by $1-\omega$. We are going to prove that $\omega$ acts as $0$ unless $y\neq 0$. Consider two cases:
\begin{itemize}
\item $y>0$. Then $\hm^Q_{y\sigma}\cong\ker(\resm)$ by Theorem \ref{Theorem RO(Q)graded abelian structure of hmq}. Let $m \in\ker(\resm)$. We have that
\[
\omega\cdot m=\tr(1)\cdot m=\tr(1\cdot\res(m))=0.
\]
So $\omega$ acts as $0$.
\item $y<0$. Then $\hm^Q_{y\sigma}\cong\coker(\trm)$. Thus $\trm(1)=0$ and $\omega$ acts as $0$.
\end{itemize}
Now let $m\in\hm^Q_{y\sigma}$ and $n\in\hm^Q_{y'\sigma}$. If any of $y,y'$ is zero or even then the statement trivially holds. Thus let both $y$ and $y'$ be odd. Then
\[
\alpha\beta=(1-\omega)^{yy'}\beta\alpha=((1-\omega)\beta)\alpha=\beta\alpha.
\]
\end{proof}
\begin{proof}[Proof of Theorem \ref{thm commutativity}]
Let $\alpha\in\hm^Q_{x+y\sigma}$ and $\beta\in\hm^Q_{x'+y'\sigma}$. Then $\alpha\beta\in\hm^Q_{(x+x')+(y+y')\sigma}$. Consider the following cases depending on the degree of the product $\alpha\beta$:
\begin{enumerate}
\item $x+x'\neq 0$ and $x+x'\neq-(y+y')$. Then the product $\alpha\beta$ does not lie on either the antidiagonal or the axis $x=0$. So by Lemma \ref{lemma 2-torsion} the product $\alpha\beta$ belongs to a $2$-torsion group, thus $\alpha\beta=\beta\alpha$.
\item $x=x'=0$. Then we are in the situation of Lemma \ref{lemma hmastsigma commutative}, so the statement holds.
\item $x=-y$ and $x'=-y'$. In this case we use Lemma \ref{lemma antidiagonal commutative}.
\item $x,x'\neq 0$ and $x+x'=0$. If both $\alpha$ and $\beta$ lie on the antidiagonal then the claim is proven by the previous point. Without loss of generality assume that $x\neq -y$, i.e., $\alpha$ does not lie on the antidiagonal. 
By the proof of Lemma \ref{lemma hmastsigma commutative} the possible sign coming from $(1-\omega)^{yy'}$ is trivial since $\omega$ acts trivially on groups lying on $x=0$ axis.
Note that $\alpha$ belongs to a $2$-torsion group, so $\alpha\beta$ is also $2$-torsion and the statement follows.
\item $x+x'=-(y+y')$, i.e., $\alpha\beta$ lies on the antidiagonal. Then we use the same argument as in the previous point.
\end{enumerate} 
\end{proof}
\begin{observation}
\label{obs ha module structure}
By Theorem \ref{thm coeffs of ha} we know that in order to describe the $\ha^Q_\star$-module structure of $\hm^Q_\star$ for any Mackey functor $\M$ we need to describe an action of three elements - $a$, $u$ and $\omega$. Thus we have already described this structure:
\begin{enumerate}
\item the action of $a$ is described by Lemma \ref{lemma multiplication by a};
\item the action of $u$ is described by Theorem \ref{thm multiplication by u};
\item the action of $\omega$ is described in this Section.
\end{enumerate}
\end{observation}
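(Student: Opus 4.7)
The plan is to verify that the three elements $a$, $u$, $\omega$ suffice to pin down the $\ha^Q_\star$-action on $\hm^Q_\star$, and then to cite the places where each action has already been computed.

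The first step is to extract from Theorem \ref{thm coeffs of ha} a minimal set of ring generators of $\ha^Q_\star$. Direct inspection gives the set $\{a, u, \omega, \tau\}$, where $\tau \in \ha^Q_\sigma$ is the generator satisfying the relation
\[
a\tau = \omega - 2.
\]
The content of the observation is therefore that, on any $\ha^Q_\star$-module, the action of $\tau$ is already determined by the actions of $a$ and $\omega$, so that only three elements need to be specified.

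To carry this out, I would argue that for any $m \in \hm^Q_{x+y\sigma}$ the element $\tau m \in \hm^Q_{x+(y+1)\sigma}$ satisfies
\[
a(\tau m) = (a\tau) m = (\omega - 2) m \in \hm^Q_{x+y\sigma}.
\]
By Lemma \ref{lemma multiplication by a}, the map $a\colon \hm^Q_{x+(y+1)\sigma} \to \hm^Q_{x+y\sigma}$ is injective except on the line $y = -x - 1$, so off this line the displayed identity recovers $\tau m$ from the known actions of $a$ and $\omega$. On the exceptional line the ambiguity is resolved using the explicit description of $\hm^Q_\star$ from Theorem \ref{Theorem RO(Q)graded abelian structure of hmq} together with $u$-periodicity (Theorem \ref{thm multiplication by u}), which transports the action across antidiagonal lines and reduces the determination of $\tau m$ to a region already covered.

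With this in hand, the three required actions are in place: Lemma \ref{lemma multiplication by a} describes the action of $a$, Theorem \ref{thm multiplication by u} describes the action of $u$, and the discussion at the start of this section gives the action of $\omega$ (namely $\omega \cdot 1 = \trm(1)$ on the $Q/Q$-level and $\omega$ acting as multiplication by $2$ on $V$). The main obstacle is the exceptional stratum where $a$ is only an epimorphism; this is the only place where one genuinely needs to invoke the finer $u$-periodicity information from Section \ref{sec multiplication by u}, rather than the ring relation $a\tau = \omega - 2$ on its own.
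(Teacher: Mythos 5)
The paper states this Observation without a proof: it is a summary that points back to Lemma \ref{lemma multiplication by a}, Theorem \ref{thm multiplication by u}, and the discussion of $\omega$, and the phrase ``By Theorem \ref{thm coeffs of ha} we know\dots'' is left to the reader. So you are supplying a justification where the paper supplies none. Your key insight is correct and worth emphasizing: $\ha^Q_\star$ has a ring generator $\tau\in\ha^Q_\sigma$, with $a\tau=\omega-2$, that the Observation does not list, so ``three elements suffice'' is not formally automatic from Theorem \ref{thm coeffs of ha}. Using the relation $a\tau=\omega-2$ together with injectivity of $a$ off the line $y=-x-1$ (Lemma \ref{lemma multiplication by a}) to pin down $\tau m$ is a clean and valid reduction.

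The weak point is your treatment of the exceptional stratum $y=-x-1$, where $a$ is only an epimorphism. You claim ``$u$-periodicity transports the action across antidiagonal lines and reduces the determination of $\tau m$ to a region already covered,'' but $u$ shifts degree by $2-2\sigma$, so it maps the line $y=-x-1$ to itself; it cannot move you off the exceptional stratum, and the step as written does not close. For the Borel-complete part of that line (i.e.\ $x\geq 2$) a cleaner argument is available: by Theorem \ref{Theorem RO(Q)graded abelian structure of hmq} the map $\epsilon_\ast\colon\hm^Q_{x+y\sigma}\to\hm^{hQ}_{x+y\sigma}$ is an isomorphism there and is a module map over $\ha^Q_\star\to\ha^{hQ}_\star$, and since $\ha^{hQ}_\sigma=0$ by Lemma \ref{lemma homotopy fixed points hz}, $\tau$ maps to zero, hence acts by zero on that cone (and likewise on $x\le -2$ via $\ho{\hm}{\star}$). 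But for the remaining low degrees ($x\in\{0,1\}$ on the exceptional line), the constraints coming from $a$, $u$, $\omega$ alone determine $\tau m$ only up to a subgroup that can genuinely be nonzero, e.g.\ $\ker(\resm)\cap\im(\trm)$; these cases need an explicit degree-by-degree argument rather than an appeal to periodicity, and your proposal does not yet supply one.
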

\begin{observation}
\label{obs multiplicative structure}
Let $\M$ be a Green functor. A big part of the multiplicative structure may be derived from the $\ha^Q_\star$-module structure and commutativity described in this Section - we know that there are elements $a_{\hm}=a\cdot 1\in \hm^Q_{-\sigma}$ and $u_{\hm}\in \hm^Q_{2-2\sigma}$ and their multiplicative relations are described by $\ha^Q_\star$-module structure.

This gives a full description of the multiplication in the even fixed degrees. For the odd fixed degrees we proceed as follows:
\begin{enumerate}
\item We need to consider only the elements lying on the antidiagonal, as the relations for other elements follows from Lemma \ref{lemma multiplication by a}.
\item Since the multiplication map $u\colon\hm^Q_{x+y\sigma}\to\hm^Q_{(x+2)+(y-2)\sigma}$ is an isomorphism if $x\geq 3$ or $x\leq -4$, we can restrict our attention to the elements of degrees $1-\sigma$, $3-3\sigma$, $\sigma-1$ and $3\sigma-3$.
\item Relations between these elements can be inferred from the ring maps $\epsilon_\star\colon\hm^Q_\star\to\hm^{hQ}_\star$ and $g_\star\colon\hm^Q_\star\to\hm^{\Phi Q}_\star$.
\end{enumerate}
Examples of such computations are in Section \ref{sec Further examples}.
\end{observation}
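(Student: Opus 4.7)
The plan is to reduce any product in $\hm^Q_\star$ to a finite collection of base data, exploiting the fact that $a$- and $u$-multiplications are isomorphisms outside bounded regions. This reduction splits into the even and odd fixed degree cases.

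For the even case, Lemma \ref{lemma multiplication by a} and Theorem \ref{thm multiplication by u} imply that any $\alpha \in \hm^Q_{x+y\sigma}$ with $x$ even can be written as $a^i u^j \alpha_0$ for some representative $\alpha_0$ with bigrading in a small neighbourhood of the origin---either on the $y$-axis, on the antidiagonal, or at $\hm^Q_0 = \M(Q/Q)$ itself. Since multiplication in $\hm^Q_\star$ is $\ha^Q_\star$-bilinear, a product $\alpha\beta$ of such elements decomposes as $a^{i+k} u^{j+l}(\alpha_0 \beta_0)$, and the base product $\alpha_0 \beta_0$ is controlled by the ring structures of $\M(Q/Q)$ and $V = \M(Q/e)$ (via the identifications of $\hm^Q_{2-2\sigma}$ with $V^Q$, and similar) together with the $\omega$-action computed in Section \ref{sec comm}.

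For the odd case, two further reductions are needed. By Lemma \ref{lemma 2-torsion}, every group in odd fixed degree off the antidiagonal is $2$-torsion, and by Lemma \ref{lemma multiplication by a} such a class is connected to an antidiagonal representative by $a$-multiplication, so we may restrict attention to products of antidiagonal elements. Then Theorem \ref{thm multiplication by u} shows that $u$ gives an isomorphism between consecutive antidiagonal positions outside the range $-3 \leq x \leq 3$. Iterating $u^{\pm 1}$ and using $\ha$-bilinearity once more, we reduce to products of elements in the four degrees $1-\sigma$, $\sigma-1$, $3-3\sigma$, $3\sigma-3$.

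The remaining task, which is the main obstacle, is to determine the products of elements in these four distinguished positions, which the $\ha^Q_\star$-module structure alone does not capture. This is handled by pushing forward along the ring maps $\epsilon_\star\colon \hm^Q_\star \to \hm^{hQ}_\star$ and $g_\star\colon \hm^Q_\star \to \hm^{\Phi Q}_\star$, whose targets are explicitly described in Proposition \ref{prop coefficients of hfp and ho} and Theorem \ref{thm coefficients of geometric fixed points}. One needs to verify that $(\epsilon_\star, g_\star)$ is jointly injective (or otherwise sufficiently faithful) on the four relevant degrees; this follows from the Mayer--Vietoris sequence associated to the Tate square as a homotopy pullback of ring spectra, once one checks the behaviour of the fibre $F(\widetilde{EQ}, \hm)$ and of $\hm_h$ in those degrees. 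Concrete computations illustrating this recipe are carried out in Section \ref{sec Further examples}.
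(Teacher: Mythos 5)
Your reduction scheme---even fixed degrees via the $\ha^Q_\star$-action, odd fixed degrees down to the four antidiagonal positions via $a$- and $u$-periodicity---reproduces the recipe the observation is sketching, and the paper offers no proof beyond pointing back to Lemma \ref{lemma multiplication by a}, Theorem \ref{thm multiplication by u}, Section \ref{sec comm} and the worked examples. The gap is in your reading of step (3). You want to check that $(\epsilon_\star, g_\star)$ is jointly injective on the four \emph{source} degrees $1-\sigma$, $3-3\sigma$, $\sigma-1$, $3\sigma-3$, and you assert this follows from the Mayer--Vietoris sequence. This misplaces where faithfulness is needed: products of the four antidiagonal classes land in \emph{even} fixed degree, so what the paper exploits is faithfulness on the target degree of the product. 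By Theorem \ref{Theorem RO(Q)graded abelian structure of hmq} one has $\hm^Q_{x+y\sigma}\cong\hm^{hQ}_{x+y\sigma}$ for $x\geq 2$ (so $\epsilon_\star$ is an isomorphism there) and $\hm^Q_{x+y\sigma}\cong\ho{\hm}{x+y\sigma}$ for $x\leq -2$ (so $f_\star$ is an isomorphism there); the computations in Section \ref{sec Further examples} use precisely $\epsilon_\star$ for the positive side and $f_\star$ (the collapse map from homotopy orbits---not $g_\star$) for the negative side.

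Moreover, the injectivity you claim is not a formal consequence of the Mayer--Vietoris sequence and may fail on the negative side. For $x\geq 1$ the fibre $F(\widetilde{EQ},\hm)$ of $\epsilon$ vanishes in those degrees, so $\epsilon_\star$ alone is mono at $1-\sigma$ and iso at $3-3\sigma$. But at $\sigma-1$ and $3\sigma-3$ one has $g_\star=0$ identically, since $\hm^{\Phi Q}_{x+y\sigma}=0$ for $x<0$ by Theorem \ref{thm coefficients of geometric fixed points}; the joint map then collapses to $\epsilon_\star\colon\coker(\resm)\to\prescript{}{N}V$, whose kernel is $V^Q/\im(\resm)$, and the Mayer--Vietoris connecting map $\hm^{tQ}_\sigma\to\hm^Q_{\sigma-1}$ vanishes only when the $\resm$-induced map $\coker(\trm)\to V^Q/NV$ is surjective, which is not automatic. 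So the joint-injectivity claim both asks for the wrong thing and is not justified by the argument given; the correct repair is to replace $g_\star$ by $f_\star$ and to move the faithfulness check to the even degrees where the products live.
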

\section{Further examples}
\label{sec Further examples}
\subsection{Constant Mackey functor $\ftwo$}
We start this section with the constant Mackey functor $\ftwo$. It has the following structure:
\[
\mackey{\mathbb{F}_2}{\mathbb{F}_2.}{1}{0}
\]
Computations of the coefficients of $\hftwo$ which are built on the unpublished work of Stong appear in \cite{MR1684248} and \cite{MR2699528}, also in work of Hu-Kriz in \cite[Proposition 6.2]{MR1808224}.

\begin{rem}
\label{rem strange notation}
To express the $RO(Q)$-graded ring structure of $\left(\hftwo\right)^Q_\star$ we will use here the notation from \cite[Remark 2.1]{MR4041284}. Therefore we define the following $\mathbb{F}_2[\lambda]$-module:
\[
\frac{\mathbb{F}_2[\lambda]}{\lambda^\infty}:=\colim_k\frac{\mathbb{F}_2[\lambda]}{\lambda^k}.
\]
This is a $\mathbb{F}_2[\lambda]$-module consisting entirely of elements that are infinitely divisible by $\lambda$. By $\frac{\mathbb{F}_2[\lambda]}{\lambda^\infty}\{\theta\}$ we denote the $\mathbb{F}_2[\lambda]$-module consisting of elements of the form $\frac{\theta}{\lambda^k}$ for $k\geq 1$. Note that in particular $\theta\notin\frac{\mathbb{F}_2[\lambda]}{\lambda^\infty}\{\theta\}$. 
\end{rem}
\begin{theorem}
\label{thm hftwo coeffs}
The $RO(Q)$-graded abelian group structure and the multiplicative structure of $\hftwo$ are given by:
\[
\left(\hftwo\right)^Q_\star\cong\mathbb{F}_2[a,\lambda]\oplus\bigoplus_{s\geq 0}\frac{\mathbb{F}_2[\lambda]}{\lambda^\infty}\{\frac{\theta}{a^s}\}.
\]
with $|\theta|=\sigma-1$, $|\lambda|=1-\sigma$ and $|a|=-\sigma$.

This data is presented in Figure \ref{figure hftwo}.
\end{theorem}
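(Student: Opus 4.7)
The plan is to deduce the additive structure from Theorem \ref{Theorem RO(Q)graded abelian structure of hmq}, and then determine the multiplicative structure by comparing with the Borel theory $\hftwo^{hQ}_\star$ via the ring map $\epsilon_\star$ together with the $\ha^Q_\star$-action described in Theorem \ref{thm multiplication by u}. For $\ftwo$ the underlying data is as simple as possible: $V=\mathbb{F}_2$ with trivial $Q$-action, $\res_{\ftwo}=\id$ and $\tr_{\ftwo}=0$, so $\ker(\res)=\coker(\res)=V^Q/\im(\res)=0$ while $\ker(\tr)=\coker(\tr)=\ker(\tr)_Q=\mathbb{F}_2$, and $H^p(Q;\mathbb{F}_2)\cong H_p(Q;\mathbb{F}_2)\cong\mathbb{F}_2$ in every nonnegative degree. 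Plugging into Theorem \ref{Theorem RO(Q)graded abelian structure of hmq} yields $\hftwo^Q_{x+y\sigma}\cong\mathbb{F}_2$ exactly on the positive cone $\{x\geq 0,\,y\leq -x\}$ and the negative cone $\{x\leq -2,\,y\geq -x\}$, and $0$ elsewhere, which matches the additive structure of $\mathbb{F}_2[a,\lambda]\oplus\Sigma^{2\sigma-2}\mathbb{F}_2[a^{-1},\lambda^{-1}]$ with $|a|=-\sigma$ and $|\lambda|=1-\sigma$.

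For the ring structure I would fix $a\in\hftwo^Q_{-\sigma}$ to be the image of the Euler class (so that multiplication by $a$ is controlled by Lemma \ref{lemma multiplication by a}) and $\lambda\in\hftwo^Q_{1-\sigma}=\ker(\tr_{\ftwo})$ to be the unique nonzero element. By Proposition \ref{prop coefficients of hfp and ho} together with $H^\ast(Q;\mathbb{F}_2)=\mathbb{F}_2[a]$, the Borel theory ring is $\hftwo^{hQ}_\star\cong\mathbb{F}_2[a,\lambda'^{\pm 1}]$, where $\lambda'$ generates degree $1-\sigma$. Theorem \ref{Theorem RO(Q)graded abelian structure of hmq} together with Proposition \ref{prop epsilon_0 f_0} shows that $\epsilon_\star$ is an isomorphism from the positive cone of $\hftwo^Q_\star$ onto the subring $\mathbb{F}_2[a,\lambda']\subset\hftwo^{hQ}_\star$ and sends $\lambda$ to $\lambda'$; since $\epsilon_\star$ is a ring map, the products $a^i\lambda^j$ with $i,j\geq 0$ are the generators of the positive cone, identifying it with $\mathbb{F}_2[a,\lambda]$. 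Strict commutativity on the full ring is Theorem \ref{thm commutativity}, and sign subtleties are absent in characteristic $2$.

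For the negative cone I would read off the $\ha^Q_\star$-module structure from Observation \ref{obs ha module structure}. Because $\tr_{\ftwo}=0$, the class $\omega\in\aq$ acts as zero on all of $\hftwo^Q_\star$ by the formulas of Section \ref{sec comm}. Specialising Theorem \ref{thm multiplication by u} to $\ftwo$, cases $(2)$, $(4)$, $(6)$ and $(8)$ become isomorphisms of copies of $\mathbb{F}_2$, while cases $(1)$, $(3)$, $(5)$, $(7)$ are zero (case $(1)$ because $\tr_{\ftwo}=0$, the others because source or target vanishes). Fixing a generator $\nu\in\hftwo^Q_{2\sigma-2}$ at $(-2,2)$ and propagating by the isomorphism cases of Theorem \ref{thm multiplication by u} for $u=\lambda^2$ together with Lemma \ref{lemma multiplication by a} for $a$, one obtains a distinguished nonzero element in each $\hftwo^Q_{(-2-j)+(2+i+j)\sigma}$; labelling this element $\nu\cdot a^{-i}\lambda^{-j}$ identifies the negative cone with $\Sigma^{2\sigma-2}\mathbb{F}_2[a^{-1},\lambda^{-1}]$ as an $\mathbb{F}_2[a,\lambda]$-module.

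The main obstacle is verifying that all cross-products between positive- and negative-cone classes are accounted for by these symbols. Most such products land in the dead zone where $\hftwo^Q_\star$ vanishes (by Lemma \ref{lemma general shape}), and those that do not are pinned down by Observation \ref{obs multiplicative structure}: it suffices to check relations in degrees $\pm(1-\sigma)$ and $\pm(3-3\sigma)$, and the only essential one is $\lambda^2\cdot\nu=0$, which is the zeroness of case $(1)$ of Theorem \ref{thm multiplication by u}. Combining this with the module compatibilities and strict commutativity delivers the ring isomorphism of the statement.
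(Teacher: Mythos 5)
Your approach matches the paper's: read the additive structure off Theorem \ref{Theorem RO(Q)graded abelian structure of hmq}, then determine the ring structure by comparison with the homotopy fixed points via $\epsilon$ and the $\ha^Q_\star$-module action. Your additive computation is correct, as is your case analysis of Theorem \ref{thm multiplication by u} for $\ftwo$, and your handling of cross-products between the two cones is fine (indeed $\lambda^2\cdot\nu=0$ is case (1), and everything else dies for degree reasons by Lemma \ref{lemma general shape}).

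There is a genuine gap in the step where you claim that ``Theorem \ref{Theorem RO(Q)graded abelian structure of hmq} together with Proposition \ref{prop epsilon_0 f_0} shows that $\epsilon_\star$ is an isomorphism from the positive cone \dots and sends $\lambda$ to $\lambda'$.'' Proposition \ref{prop epsilon_0 f_0} identifies $\epsilon$ only in degree $0$; together with Lemma \ref{lemma multiplication by a} this propagates down the $x=0$ column, but it says nothing about $\epsilon_{1-\sigma}$, which is exactly the degree you need in order to see $\epsilon(\lambda)=\lambda'$ and hence deduce $\lambda^2=u$. (The $x=1$ column cannot be reached from $x=0$ by $a$-multiplication, and reaching it from $x\geq 2$ by $u$-multiplication would already presuppose $u=\lambda^2$.) The paper fills this with Lemma \ref{lemma hm^bullet_1-sigma}, which identifies $\epsilon_{1-\sigma}$ (via the Mackey restriction of $\hm^\bullet_{1-\sigma}$) as the inclusion $\ker(\trm)\hookrightarrow\prescript{}{N}V$, which for $\ftwo$ is the identity on $\mathbb{F}_2$. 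You should cite that lemma here, or alternatively argue directly that the fibre $F$ of $\epsilon\colon\hm\to\hm^h$ satisfies $\pi^Q_{1+y\sigma}(F)\cong\pi^Q_1(F)=0$ (using Lemma \ref{lemma gfp iso to tate}), so that $\epsilon$ is injective, hence an isomorphism, in those degrees. With that one additional input your proof goes through and is essentially the paper's argument.
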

\begin{rem}
Note that in the theorem above $\theta$ is a "virtual" element which does not describe any existing element of $\left(\hftwo\right)^Q_\star$. Also note that the product of any two elements from the second summand is zero. The theorem in this form appeared in \cite[Section 2.1]{MR4041284}.
\end{rem}
\begin{figure}[ht]
\begin{tikzpicture}[scale=0.7]
%Kratka
\draw[help lines, thin, lgray] (0,0) grid (20,20);
\draw[->] (0,10)--(20,10) ;
\draw[->] (10,0)--(10,20) ;

%Czerwone linie po lewej
\foreach \x in {0,...,8}
\draw[red] (\x,20)--(\x,20-\x);

%CZerwone linie po prawej
\foreach \x in {0,...,10}
\draw[red] (20-\x,0)--(20-\x,\x);

%Niebieskie linie
\draw[green] (10,10)--(20,0);
\foreach \x in {0,...,8}
\draw[green] (8,12+\x)--(\x,20);
\foreach \x in {1,...,9}
\draw[green] (10,\x)--(10+\x,0);

%Pustekropki w górę
\foreach \x in {0,...,1}
\node[circle, draw, fill=white, scale=0.3] at (1,20-\x) {};
\foreach \x in {0,...,3}
\node[circle, draw, fill=white, scale=0.3] at (3,20-\x) {};
\foreach \x in {0,...,5}
\node[circle, draw, fill=white, scale=0.3] at (5,20-\x) {};
\foreach \x in {0,...,7}
\node[circle, draw, fill=white, scale=0.3] at (7,20-\x) {};
\foreach \x in {0,...,2}
\node[circle, draw, fill=white, scale=0.3] at (2,20-\x) {};
\foreach \x in {0,...,4}
\node[circle, draw, fill=white, scale=0.3] at (4,20-\x) {};
\foreach \x in {0,...,6}
\node[circle, draw, fill=white, scale=0.3] at (6,20-\x) {};
\foreach \x in {0,...,8}
\node[circle, draw, fill=white, scale=0.3] at (8,20-\x) {};

%Pustekropki w dół
\foreach \x in {0,...,2}
\node[circle, draw, fill=white, scale=0.3] at (18,\x) {};
\foreach \x in {0,...,4}
\node[circle, draw, fill=white, scale=0.3] at (16,\x) {};
\foreach \x in {0,...,6}
\node[circle, draw, fill=white, scale=0.3] at (14,\x) {};
\foreach \x in {0,...,8}
\node[circle, draw, fill=white, scale=0.3] at (12,\x) {};
\foreach \x in {0,...,10}
\node[circle, draw, fill=white, scale=0.3] at (10,\x) {};
\foreach \x in {0,...,9}
\node[circle, draw, fill=white, scale=0.3] at (11,\x) {};
\foreach \x in {0,...,7}
\node[circle, draw, fill=white, scale=0.3] at (13,\x) {};
\foreach \x in {0,...,5}
\node[circle, draw, fill=white, scale=0.3] at (15,\x) {};
\foreach \x in {0,...,3}
\node[circle, draw, fill=white, scale=0.3] at (17,\x) {};
\foreach \x in {0,...,1}
\node[circle, draw, fill=white, scale=0.3] at (19,\x) {};
\node[circle, draw, fill=white, scale=0.3] at (20,0) {};
\node[circle, draw, fill=white, scale=0.3] at (0,20) {};

%Etykiety
\foreach \x in {1,...,9}
\node[left] at (10,10-\x) {$a^\x$};
\foreach \x in {1,...,9}
\node[above right] at (10+\x,10-\x) {$\lambda^\x$};

\node[above left] at (20,10) {$x\cdot 1$};
\node[below left] at (10,20) {$y\cdot \sigma$};

\node[scale=1.5] at (10,21) {$\left(\hftwo\right)^Q_{x+y\sigma}$};

\node at (0.5,-1) {Key:};
\node[circle, draw, fill=white, scale=0.3] at (3,-1) {};
\node at (4,-1) {$\mathbb{Z}/2$};
\end{tikzpicture}
\caption{Coefficients of $\hftwo$. The green lines represent multiplication by $\lambda$. Red lines, as before, represent multiplication by $a$.}
\label{figure hftwo}
\end{figure}
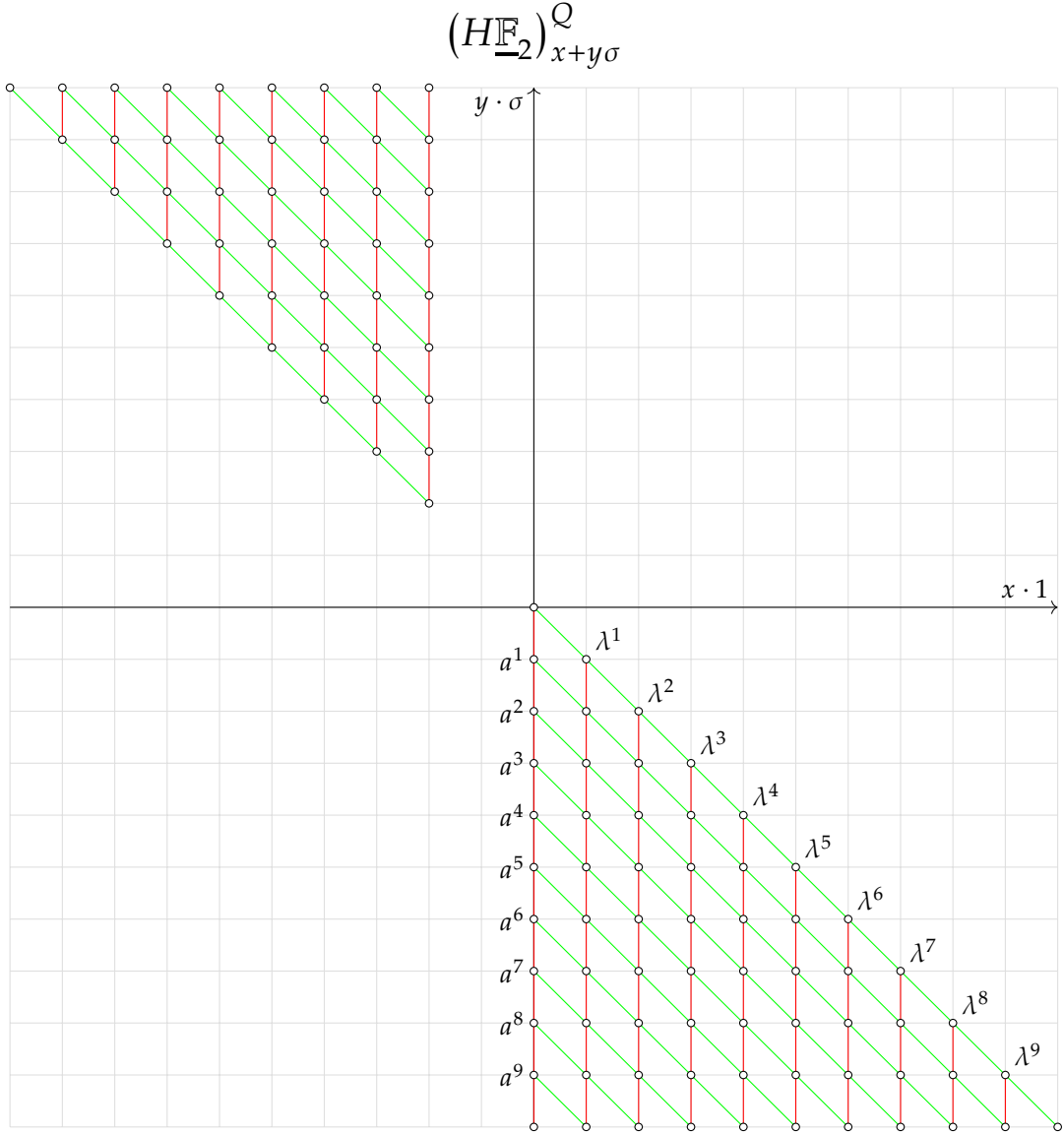
In order to prove this theorem we will need to describe the multiplicative structure of $\left(\hftwo\right)^{hQ}_\star$ and coefficients of $\left(\hftwo\right)_h$.
\begin{lemma}
\label{lemma ho hftwo}
\[
\left(\hftwo\right)^{hQ}_\star\cong\mathbb{F}_2[a][\lambda,\lambda^{-1}].
\]
\end{lemma}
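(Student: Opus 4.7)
The plan is to combine Proposition \ref{prop coefficients of hfp and ho} with the multiplicative structure of the trigraded homotopy fixed points spectral sequence. First I would pin down the underlying $RO(Q)$-graded abelian group: since $V=\ftwo(Q/e)=\mathbb{F}_2$ carries the trivial $Q$-action (the sign representation is trivial modulo $2$), we have $\tilde V\cong V$ and $H^y(S^{y\sigma};V)\cong\mathbb{F}_2$ as a trivial $\mathbb{Z}[Q]$-module for every $y\in\mathbb{Z}$. Combined with $H^p(Q;\mathbb{F}_2)\cong\mathbb{F}_2$ for all $p\geq 0$, Proposition \ref{prop coefficients of hfp and ho} forces $(\hftwo)^{hQ}_{x+y\sigma}\cong\mathbb{F}_2$ at every lattice point with $x+y\leq 0$ and zero elsewhere. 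This matches the graded support of $\mathbb{F}_2[a][\lambda,\lambda^{-1}]$ with $|a|=-\sigma$ and $|\lambda|=1-\sigma$, since the monomial $a^p\lambda^n$ sits in bidegree $(n,-p-n)$ for $p\geq 0$ and $n\in\mathbb{Z}$.

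Next I would exhibit the generators. Take $a\in(\hftwo)^{hQ}_{-\sigma}$ to be the image of the Euler class $a\in\hftwo^Q_{-\sigma}$ under the ring map $\epsilon_\ast\colon\hftwo^Q_\star\to(\hftwo)^{hQ}_\star$, and let $\lambda\in(\hftwo)^{hQ}_{1-\sigma}$ be the unique non-zero element of that one-dimensional $\mathbb{F}_2$-cell. The monomials $a^p\lambda^n$ with $p\geq 0$ and $n\in\mathbb{Z}$ are in bijection with the non-zero cells of $(\hftwo)^{hQ}_\star$, so the map of graded rings $\mathbb{F}_2[a][\lambda,\lambda^{-1}]\to(\hftwo)^{hQ}_\star$ sending monomials to monomials is in the right bidegrees. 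The work then is to check that all these monomials are actually nonzero.

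The principal obstacle is precisely this non-vanishing. I would argue it using the multiplicative structure of the trigraded homotopy fixed points spectral sequence, which by Proposition \ref{prop coefficients of hfp and ho} collapses on $E_2$ (each $y$-layer has only one nontrivial row). At the level of $E_2$, the pairings reduce to the cup product
\[
H^p(Q;\mathbb{F}_2)\otimes H^{p'}(Q;\mathbb{F}_2)\to H^{p+p'}(Q;\mathbb{F}_2),
\]
which is the standard isomorphism of copies of $\mathbb{F}_2$ coming from $H^\ast(Q;\mathbb{F}_2)\cong\mathbb{F}_2[a]$, together with the suspension pairings
\[
H^y(S^{y\sigma};\mathbb{F}_2)\otimes H^{y'}(S^{y'\sigma};\mathbb{F}_2)\to H^{y+y'}(S^{(y+y')\sigma};\mathbb{F}_2),
\]
which are isomorphisms of copies of $\mathbb{F}_2$ by a direct cellular calculation. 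Collapse of the spectral sequence lifts these $E_2$-isomorphisms to $(\hftwo)^{hQ}_\star$, so $a^p\lambda^n$ is a generator of its $\mathbb{F}_2$-cell. Invertibility of $\lambda$ follows because $\lambda$ and the unique non-zero element of $(\hftwo)^{hQ}_{\sigma-1}$ multiply to a non-zero class in $(\hftwo)^{hQ}_0\cong\mathbb{F}_2$, hence to the unit. This yields the claimed isomorphism $(\hftwo)^{hQ}_\star\cong\mathbb{F}_2[a][\lambda,\lambda^{-1}]$.
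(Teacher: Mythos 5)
Your proof is correct and follows essentially the same route as the paper: both establish the abelian group structure from Proposition \ref{prop coefficients of hfp and ho} and then obtain the ring structure from the multiplicative trigraded homotopy fixed points spectral sequence, whose generators $a$ and $\lambda$ arise from the cup product on $H^*(Q;\mathbb{F}_2)$ and the pairings $S^{y\sigma}\wedge S^{y'\sigma}\to S^{(y+y')\sigma}$ respectively. The only cosmetic difference is that the paper works at the $E_1$-page and observes that the $d_1$-differentials already vanish (over $\mathbb{F}_2$ with trivial action the maps $1\pm\gamma$ induce $0$), whereas you pass to $E_2$; the two pages coincide here, and in both arguments the absence of extension problems is guaranteed by each $y$-layer having a single nonzero row.
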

\begin{proof}
The $RO(Q)$-graded abelian group structure follows from Proposition \ref{prop coefficients of hfp and ho}. To see the multiplicative structure we are going to describe the $E_1$-page of the trigraded homotopy fixed point spectral sequence, following the discussion given in Subsection \ref{subsec hfp ho spectral sequence}.

Fix $y$. Recall from Subsection \ref{subsec hfp ho spectral sequence} that for $X=\hftwo$:
\[
E_1^{pq}(y)=\Hom_Q\left(H_p(Q_+\wedge S^p),\pi_{-q}\left(F\left(S^{y\sigma},\hftwo\right)\right)\right)
\]
where $\pi_{-q}\left(F\left(S^{y\sigma},\hftwo\right)\right)$ denotes the $-q$-th homotopy group of the underlying naive spectrum of $F\left(S^{y\sigma},\hftwo\right)$.

We have that $H_p(Q_+\wedge S^p)\cong\mathbb{Z}[Q]$ as a $\mathbb{Z}[Q]$-module and
\[
\pi_{-q}\left(F\left(S^{y\sigma},\hftwo\right)\right)\cong\left\{
\begin{array}{ll}
\mathbb{Z}/2&\textrm{if}\; y=q \\
0 &\textrm{else.}
\end{array}
\right.
\]
The differentials on the $E_1$-page are the differentials computing the group cohomology, thus they are all $0$. So the spectral sequence collapses on the $E_1$-page.

The trigraded homotopy fixed point spectral sequence is multiplicative, so we can describe the $E_1$-page as an algebra as follows:
\[
E_1^{**}(*)\cong\mathbb{F}_2[a][\lambda,\lambda^{-1}]
\]
with $|a|=(1,-1,-1)$ and $|\lambda|=(0,-1,-1)$. Since all differentials are $0$, we get that
\[
\left(\hftwo\right)^{hQ}_\star\cong \mathbb{F}_2[a][\lambda,\lambda^{-1}]
\]
with $|a|=-\sigma$ and $|\lambda|=1-\sigma$.
\end{proof} 
\begin{cor}
\label{cor hoht hftwo}
\[
\begin{array}{c}
\left(\hftwo\right)^{tQ}_\star\cong\mathbb{F}_2[a,a^{-1}][\lambda,\lambda^{-1}] \\
\ho{\left(\hftwo\right)}{\star}\cong\mathbb{F}_2[a^{-1}][\lambda,\lambda^{-1}].
\end{array}
\]
\end{cor}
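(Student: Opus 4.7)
The first isomorphism is essentially immediate: by Lemma \ref{lemma_a periodicity of geometric fp and tate} we have $\left(\hftwo\right)^{tQ}_\star \cong a^{-1}\left(\hftwo\right)^{hQ}_\star$, and since $\hftwo^h \to \hftwo^t$ is a map of ring $Q$-spectra, localising $\mathbb{F}_2[a][\lambda, \lambda^{-1}]$ at $a$ (using the ring structure computed in Lemma \ref{lemma ho hftwo}) yields the claimed ring $\mathbb{F}_2[a, a^{-1}][\lambda, \lambda^{-1}]$.

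For the homotopy orbits the plan is to exploit the cofibre sequence $\hftwo_h \to \hftwo^h \to \hftwo^t$ coming from the bottom row of the Tate diagram. Applying $\pi^Q_\star(-)$ gives a long exact sequence in which the middle map $\left(\hftwo\right)^{hQ}_\star \to \left(\hftwo\right)^{tQ}_\star$ is precisely the $a$-localisation used above. Because $\left(\hftwo\right)^{hQ}_\star \cong \mathbb{F}_2[a][\lambda, \lambda^{-1}]$ is $a$-torsion free, this localisation is injective in every $RO(Q)$-degree, so the long exact sequence collapses to short exact sequences of the form
\[
0 \to \left(\hftwo\right)^{hQ}_{V+1} \to \left(\hftwo\right)^{tQ}_{V+1} \to \ho{\left(\hftwo\right)}{V} \to 0.
\]
This identifies $\ho{\left(\hftwo\right)}{\star}$, as an abelian group, with an appropriate desuspension of the cokernel of the localisation map, namely the $\mathbb{F}_2$-vector subspace of $\mathbb{F}_2[a, a^{-1}][\lambda, \lambda^{-1}]$ spanned by monomials $a^i\lambda^j$ with $i < 0$.

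To upgrade this to the claimed module description, I would invoke Proposition \ref{prop X_h is a module over X^h}: $\hftwo_h$ is a module over $\hftwo^h$, so the connecting map in the long exact sequence is $\left(\hftwo\right)^{hQ}_\star$-linear. Consequently the $\left(\hftwo\right)^{hQ}_\star$-action on $\ho{\left(\hftwo\right)}{\star}$ coincides, under the identification above, with the natural action on the cokernel inherited from $\left(\hftwo\right)^{tQ}_\star$. Re-indexing the basis to absorb the degree shift coming from the connecting map then produces generators $a^{-n}\lambda^m$ for $n \geq 0$, $m \in \mathbb{Z}$, yielding the isomorphism $\ho{\left(\hftwo\right)}{\star} \cong \mathbb{F}_2[a^{-1}][\lambda, \lambda^{-1}]$ of $\left(\hftwo\right)^{hQ}_\star$-modules.

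The main obstacle is bookkeeping: one must make sure that the desuspension in the short exact sequence is tracked carefully, and that the resulting generators match the symmetric labelling $a^{-n}\lambda^m$. Once the correct generators are identified, verification that $a$ and $\lambda$ act as expected is routine, since both act through the known module structure on $\left(\hftwo\right)^{tQ}_\star$.
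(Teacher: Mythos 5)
Your proposal is correct, and the first part is essentially the paper's argument. For the second part you take a genuinely different, though closely related, route. The paper observes that the map $\ho{\left(\hftwo\right)}{\star}\to\left(\hftwo\right)^{hQ}_\star$ in the long exact sequence is the norm map, hence multiplication by $N=2$, which vanishes over $\mathbb{F}_2$; this immediately breaks the sequence into the short exact sequences you write down. You instead argue from the other end: since $\left(\hftwo\right)^{hQ}_\star\cong\mathbb{F}_2[a][\lambda,\lambda^{-1}]$ is $a$-torsion-free, the $a$-localisation map $\left(\hftwo\right)^{hQ}_\star\to\left(\hftwo\right)^{tQ}_\star$ is injective, which forces the norm map to be zero. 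The two observations are logically interchangeable here, but each buys something slightly different: the paper's version relies on the standard identification of the $X_h\to X^h$ map with the norm (and is shorter once you grant that fact), while yours is purely algebraic and avoids any discussion of the norm map, at the cost of having to check that $\theta$ really is the localisation on coefficients — which it is, by Lemma \ref{lemma_a periodicity of geometric fp and tate}. Your subsequent bookkeeping for the degree shift and the module structure via Proposition \ref{prop X_h is a module over X^h} is sound, and is more explicit than anything the paper says about it.
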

\begin{proof}
The first statement follows from the fact that $\hftwo^t\simeq\hftwo^h\wedge\widetilde{EQ}$ and from Lemma \ref{lemma ho hfp are uperiodic}. The second part follows from the long exact sequence in homotopy for the cofibre sequence 
\[
\left(\hftwo\right)_h\to\hftwo^h\to \hftwo^t
\]
where the map $\ho{\left(\hftwo\right)}{\star}\to\left(\hftwo\right)^{hQ}_\star$ is multiplication by $N$, so zero in this case.
\end{proof}
\begin{proof}[Proof of Theorem \ref{thm hftwo coeffs}]
The $RO(Q)$-graded abelian group structure follows from the Theorem \ref{Theorem RO(Q)graded abelian structure of hmq}. Following Observation \ref{obs multiplicative structure} we know that the multiplicative structure of $\left(\hftwo\right)^Q_\star$ is described by the following elements:
\begin{itemize}
\item $a=a_{\hftwo}$ - for the properties of multiplication by $a$ see Lemma \ref{lemma multiplication by a};
\item $u=u_{\hftwo}$ - see Section \ref{sec multiplication by u};
\item $\lambda\in\left(\hftwo\right)^Q_{1-\sigma}$.
\end{itemize}
We need to check three relations that do not follow directly from previous sections - i.e., we need to prove that:
\begin{enumerate} 
\item $\lambda^2=u$.
\item If $\eta$ is the generator of $\ha^Q_{2\sigma-2}$ then $\hm^Q_{3\sigma-3}$ is generated by $\lambda^{-1}\eta$.
\item $\eta^2=0$.
\end{enumerate}
The rest of the structure will follow.
  
We firstly prove point (1), thus we are going to prove that $\lambda^2=u$. Consider the diagram expressing multiplication by $\lambda$:
\[
\begin{tikzcd}
\left(\hftwo\right)^Q_{1-\sigma}\dar["\epsilon_{1-\sigma}"]\rar["\lambda"] &\left(\hftwo\right)^Q_{2-2\sigma}\dar["\epsilon_{2-2\sigma}"] \\
\left(\hftwo\right)^{hQ}_{1-\sigma}\rar["\lambda"] & \left(\hftwo\right)^{hQ}_{2-2\sigma}.
\end{tikzcd}
\]
By Theorem \ref{Theorem RO(Q)graded abelian structure of hmq}, the right vertical arrow is an isomorphism, so does the bottom horizontal arrow by Lemma \ref{lemma ho hftwo}. From Lemma \ref{lemma hm^bullet_1-sigma} we deduce that the left vertical arrow is also an isomorphism. Thus the top vertical arrow is an isomorphism and $\lambda^2=u$. 

Now proceed to the point (2). Let $\eta$ be a generator of $\left(\hftwo\right)^Q_{2\sigma-2}$ and $\eta '$ a generator of $\left(\hftwo\right)^Q_{3\sigma-3}$. Similar argument as above applied to the diagram
\[
\begin{tikzcd}
\ho{\left(\hftwo\right)}{3\sigma-3}\dar["f_{\sigma-1}"]\rar["\lambda"] &\ho{\left(\hftwo\right)}{2\sigma-2}\dar["f_{2\sigma-2}"] \\
\left(\hftwo\right)^Q_{3\sigma-3}\rar["\lambda"] & \left(\hftwo\right)^Q_{2\sigma-2}
\end{tikzcd}
\]
together with Corollary \ref{cor hoht hftwo} shows that $\lambda^{-1}\eta=\eta '$.

From the relations above we derive the point (3), i.e., that $\eta^2=0$. Note that $\lambda\eta=0$. By multiplying the relation from the previous point $\eta=\lambda\eta'$ by $\eta$ and using commutativity, we get that $\eta^2=0$.

The statement in the Theorem is obtained by putting $\eta=\frac{\theta}{\lambda}$.
\end{proof}

\subsection{The norm of $\mathbb{F}_2$, $N_e^Q\mathbb{F}_2$}
We continue the examples section with the Mackey functor $N_e^Q\mathbb{F}_2$. It has the form
\[
\mackey{\mathbb{Z}/4}{\mathbb{Z}/2.}{1}{2}
\]
This Mackey functor appears in the work of Hill in \cite{2017arXiv170902005H}, where he computes the Bredon homology with coefficients in it of spaces of the form $\Omega^\sigma\Sigma^\sigma X$. We will follow the notation from this paper and denote this Mackey functor by $\underline{B}$. For us $\underline{B}$ is an example of an interesting feature - it has only two zero columns, which are in fixed degrees $1$ and $-1$.

Since $\underline{B}(Q/e)=\ftwo(Q/e)$, Lemma \ref{lemma ho hftwo} and Corollary \ref{cor hoht hftwo} describe also $\hb^{hQ}_\star$, $\ho{\hb}{\star}$ and $\hb^{tQ}_\star$. Most of the proofs follow in the analogous way as in the previous examples, so we will only comment on how to get the multiplicative structure of $\hb^Q_\star$. 
\begin{lemma}
\label{lemma gfp of hb}
\[
\hb^{\Phi Q}_\star\cong \mathbb{F}_2[a,a^{-1}]\oplus\lambda^{2}\mathbb{F}_2[a,a^{-1},\lambda]
\]
with $|a|=-\sigma$ and $|\lambda|=1-\sigma$.
\end{lemma}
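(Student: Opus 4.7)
Proof plan. The plan is to combine Theorem~\ref{thm coefficients of geometric fixed points} for the abelian group structure with the ring map $\epsilon_{t}\colon\hb^{\Phi}\to\hb^{t}$ for the multiplicative structure, exploiting the fact that the underlying $Q$-spectra of $\hb$ and $\hftwo$ coincide (both realise $H\mathbb{F}_{2}$ with trivial $Q$-action, since $\underline{B}(Q/e)=\mathbb{F}_{2}=\underline{\mathbb{F}}_{2}(Q/e)$). The Green functor map $\underline{B}\to\underline{\mathbb{F}}_{2}$ induces a nonequivariant equivalence of ring $Q$-spectra, so by Proposition~\ref{prop_GM 1.2} we have $\hb^{t}\simeq\hftwo^{t}$ as ring $Q$-spectra, and Corollary~\ref{cor hoht hftwo} gives an identification $\hb^{tQ}_{\star}\cong\mathbb{F}_{2}[a,a^{-1}][\lambda,\lambda^{-1}]$ with $|a|=-\sigma$ and $|\lambda|=1-\sigma$.

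First, I would compute the abelian group structure by plugging $V=\mathbb{F}_{2}$, $\tr_{\underline{B}}=2$ and $\res_{\underline{B}}=1$ into Theorem~\ref{thm coefficients of geometric fixed points}. Since $\tr_{\underline{B}}$ is injective and its cokernel is $(\mathbb{Z}/4)/(2)\cong\mathbb{F}_{2}$, this yields $\hb^{\Phi Q}_{0}\cong\mathbb{F}_{2}$, $\hb^{\Phi Q}_{1}=0$, $\hb^{\Phi Q}_{x}\cong\hat{H}^{x}(Q;\mathbb{F}_{2})=\mathbb{F}_{2}$ for $x\geq 2$, and $\hb^{\Phi Q}_{x}=0$ for $x<0$. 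Propagating along the twisted direction via $a$-periodicity (Lemma~\ref{lemma_a periodicity of geometric fp and tate}) gives exactly the underlying abelian group of the claimed expression.

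Next, I would pin down the ring structure through the ring map $\epsilon_{t*}\colon\hb^{\Phi Q}_{\star}\to\hb^{tQ}_{\star}$. By Lemma~\ref{lemma gfp iso to tate}, $\epsilon_{t*}$ is an isomorphism in every fixed degree $x\geq 2$, so the generator of $\hb^{\Phi Q}_{x-x\sigma}$ is identified with $\lambda^{x}\in\hb^{tQ}_{\star}$, inheriting the relation $\lambda^{i}\lambda^{j}=\lambda^{i+j}$. In fixed degree $x=0$ both sides are $\mathbb{F}_{2}$ and the ring map $\epsilon_{t*}$ preserves the unit, so it is an isomorphism identifying the fixed-degree-$0$ strand with $\mathbb{F}_{2}[a,a^{-1}]\subset\hb^{tQ}_{\star}$. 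Combined with the vanishing $\hb^{\Phi Q}_{1+y\sigma}=0$, the image of $\epsilon_{t*}$ is precisely the subring $\mathbb{F}_{2}[a,a^{-1}]\oplus\lambda^{2}\mathbb{F}_{2}[a,a^{-1},\lambda]$, giving the desired ring isomorphism.

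The main subtlety is notational: because $\hb^{\Phi Q}_{1-\sigma}=0$ there is no literal element $\lambda\in\hb^{\Phi Q}_{\star}$, so the symbols $\lambda^{n}$ for $n\geq 2$ are to be interpreted as the unique nonzero generators of $\hb^{\Phi Q}_{n-n\sigma}$, named by their images in $\hb^{tQ}_{\star}$; and $\lambda^{2}\mathbb{F}_{2}[a,a^{-1},\lambda]$ should be read as the ideal inside $\mathbb{F}_{2}[a,a^{-1},\lambda]$ of polynomials of $\lambda$-degree at least $2$. The hardest part is therefore the bookkeeping to confirm that this ideal together with $\mathbb{F}_{2}[a,a^{-1}]$ is closed under multiplication in $\hb^{tQ}_{\star}$ and coincides with the image of $\epsilon_{t*}$; once the degree-by-degree comparison above is in hand, this is a routine verification.
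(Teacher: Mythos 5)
Your proof is correct, and it follows the approach the paper has been using for the other geometric-fixed-point computations (for $\hz^\Phi$, $\ha^\Phi$ and $\hftwo^\Phi$), which is why the paper omits a proof for this lemma and simply remarks that the argument is analogous. Specifically: you use Theorem~\ref{thm coefficients of geometric fixed points} with $V=\mathbb{F}_2$, $\tr$ injective (so $\ker(\tr)_Q=0$) and $\coker(\tr)\cong\mathbb{F}_2$, propagate by $a$-periodicity (Lemma~\ref{lemma_a periodicity of geometric fp and tate}), and then use the ring map $\epsilon_{t\ast}$ together with Lemma~\ref{lemma gfp iso to tate} to transport the multiplicative structure from $\hb^{tQ}_\star\cong\mathbb{F}_2[a,a^{-1},\lambda,\lambda^{-1}]$. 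The one place where you are slightly more explicit than the text is in justifying that $\hb^t$ and $\hftwo^t$ agree as ring $Q$-spectra: the paper just notes $\underline{B}(Q/e)=\underline{\mathbb{F}}_2(Q/e)$, while you exhibit the Green functor map $\underline{B}\to\underline{\mathbb{F}}_2$ inducing a nonequivariant equivalence and apply Proposition~\ref{prop_GM 1.2}, which is the cleanest way to get the ring-level identification. Your observation that $\epsilon_{t\ast}$ is injective here (unlike in the $\ha$ case, where $\mathbb{Z}\to\mathbb{F}_2$ in fixed degree $0$ is not) and that it therefore realises $\hb^{\Phi Q}_\star$ as the evident subring is the right way to finish.
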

\begin{theorem}
The $RO(Q)$-graded abelian group structure and the multiplicative structure of $\hb^Q_\star$ is given by:
\[
\hb^Q_\star\cong \frac{\mathbb{Z}/4[a]}{2a}[u,u\lambda]\oplus\frac{\mathbb{Z}_4[a,u,u\lambda]}{(a^\infty,u^\infty,(u\lambda)^\infty)}\{2\}.
\]
Here $|a|=-\sigma$, $|\lambda|=1-\sigma$, $|u|=2-2\sigma$, $u=\lambda^2$ and the second direct summand is a $\mathbb{Z}_4[a,u,u\lambda]$-module consisting of elements of the form 
\[\frac{2}{a^ku^l(u\lambda)^m}\] for $k,l,m\geq 0$ not simultaneously equal to $0$ (see Remark \ref{rem strange notation}).

This data is presented in Figure \ref{figure coeffs of hb}.
\end{theorem}
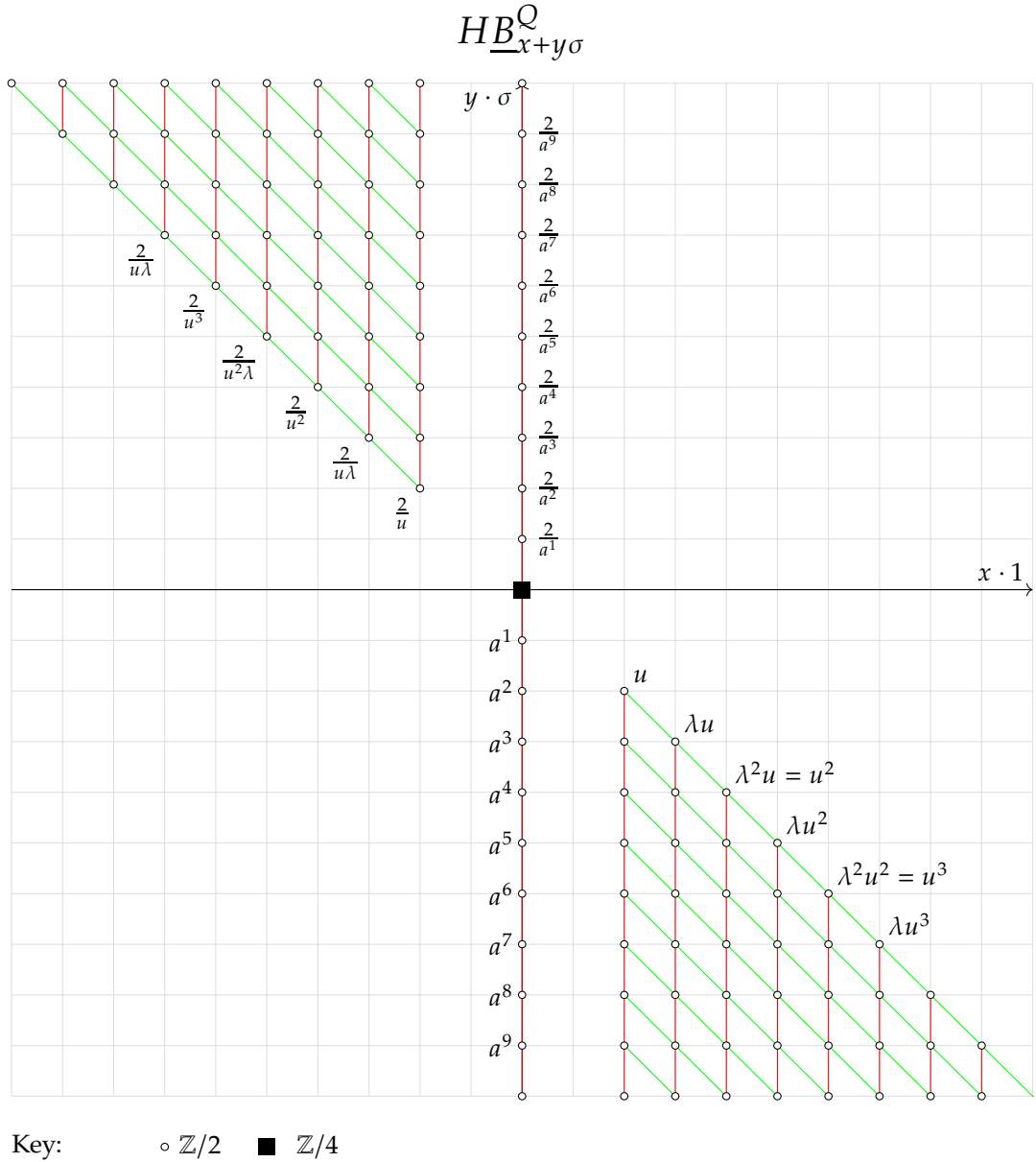
\begin{figure}[ht]
\begin{tikzpicture}[scale=0.7]
%Kratka
\draw[help lines, thin, lgray] (0,0) grid (20,20);
\draw[->] (0,10)--(20,10) ;
\draw[->] (10,0)--(10,20) ;
%Czerwone linie po lewej
\foreach \x in {0,...,8}
\draw[red] (\x,20)--(\x,20-\x);

%CZerwone linie po prawej
\foreach \x in {0,...,8}
\draw[red] (20-\x,0)--(20-\x,\x);
\draw[red] (10,0)--(10,20);
%Niebieskie linie
\foreach \x in {0,...,8}
\draw[green] (8,12+\x)--(\x,20);
\foreach \x in {0,...,8}
\draw[green] (12,8-\x)--(20-\x,0);

%Pustekropki w górę
\foreach \x in {0,...,1}
\node[circle, draw, fill=white, scale=0.3] at (1,20-\x) {};
\foreach \x in {0,...,3}
\node[circle, draw, fill=white, scale=0.3] at (3,20-\x) {};
\foreach \x in {0,...,5}
\node[circle, draw, fill=white, scale=0.3] at (5,20-\x) {};
\foreach \x in {0,...,7}
\node[circle, draw, fill=white, scale=0.3] at (7,20-\x) {};
\foreach \x in {0,...,2}
\node[circle, draw, fill=white, scale=0.3] at (2,20-\x) {};
\foreach \x in {0,...,4}
\node[circle, draw, fill=white, scale=0.3] at (4,20-\x) {};
\foreach \x in {0,...,6}
\node[circle, draw, fill=white, scale=0.3] at (6,20-\x) {};
\foreach \x in {0,...,8}
\node[circle, draw, fill=white, scale=0.3] at (8,20-\x) {};

%Pustekropki w dół
\foreach \x in {0,...,2}
\node[circle, draw, fill=white, scale=0.3] at (18,\x) {};
\foreach \x in {0,...,4}
\node[circle, draw, fill=white, scale=0.3] at (16,\x) {};
\foreach \x in {0,...,6}
\node[circle, draw, fill=white, scale=0.3] at (14,\x) {};
\foreach \x in {0,...,8}
\node[circle, draw, fill=white, scale=0.3] at (12,\x) {};
\foreach \x in {0,...,9}
\node[circle, draw, fill=white, scale=0.3] at (10,\x) {};
\foreach \x in {11,...,20}
\node[circle, draw, fill=white, scale=0.3] at (10,\x) {};
\foreach \x in {0,...,7}
\node[circle, draw, fill=white, scale=0.3] at (13,\x) {};
\foreach \x in {0,...,5}
\node[circle, draw, fill=white, scale=0.3] at (15,\x) {};
\foreach \x in {0,...,3}
\node[circle, draw, fill=white, scale=0.3] at (17,\x) {};
\foreach \x in {0,...,1}
\node[circle, draw, fill=white, scale=0.3] at (19,\x) {};
\draw (20,0) {};
\node[circle, draw, fill=white, scale=0.3] at (0,20) {};

%Etykiety
\foreach \x in {1,...,9}
\node[left] at (10,10-\x) {$a^\x$};
\foreach \x in {1,...,9}
\node[right,xshift=2pt] at (10,10+\x) {$\frac{2}{a^\x}$};
\node[above right] at (12,8) {$u$};
\node[above right] at (13,7) {$\lambda u$};
\node[above right] at (14,6) {$\lambda^2 u=u^2$};
\node[above right] at (15,5) {$\lambda u^2$};
\node[above right] at (16,4) {$\lambda^2 u^2=u^3$};
\node[above right] at (17,3) {$\lambda u^3$};

\node[below left] at (8,12) {$\frac{2}{u}$};
\node[below left] at (7,13) {$\frac{2}{u\lambda}$};
\node[below left] at (6,14) {$\frac{2}{u^2}$};
\node[below left] at (5,15) {$\frac{2}{u^2\lambda}$};
\node[below left] at (4,16) {$\frac{2}{u^3}$};
\node[below left] at (3,17) {$\frac{2}{u\lambda}$};
\node[regular polygon,regular polygon sides=4,fill,scale=0.7] at (10,10) {};

\node[above left] at (20,10) {$x\cdot 1$};
\node[below left] at (10,20) {$y\cdot \sigma$};

\node[scale=1.5] at (10,21) {$H\underline{B}^Q_{x+y\sigma}$};

\node at (0.5,-1) {Key:};
\node[circle, draw, fill=white, scale=0.3] at (3,-1) {};
\node at (3.7,-1) {$\mathbb{Z}/2$};
\node[regular polygon,regular polygon sides=4,fill,scale=0.7] at (5,-1) {};
\node at (6,-1) {$\mathbb{Z}/4$};
\end{tikzpicture}
\caption{Coefficients of $\hb$. Notation as in Figure \ref{figure hftwo}.}
\label{figure coeffs of hb}
\end{figure}
\begin{proof}
We are going to comment only on the multiplicative structure. By Observation \ref{obs multiplicative structure} we know that it can be described by the following elements:
\begin{itemize}
\item $a_{\hb}\in\hb^Q_{-\sigma}$;
\item $u_{\hb}\in\hb^Q_{2-2\sigma}$;
\item a generator of $\hb^Q_{3-3\sigma}$.
\end{itemize}
and their inverses. From the map $\epsilon_\star\colon\hb^Q_\star\to\hb^{hQ}_\star$ we can deduce that the last element may be described as $\lambda^3=u\lambda$. Thus the result follows.
\end{proof}
\subsection{The Mackey functor $\zt$}
We close the examples section with an example of a Mackey functor with a non-trivial action on the $Q/e$-level. This is the fixed points Mackey functor of $\tilde{\mathbb{Z}}$ and its structure is given by:
\[
\mackey{0}{\tilde{\mathbb{Z}}.}{}{}
\]
We denote this Mackey functor by $\zt$. Since $\gamma$ does not act on the $Q/e$-level as a unitary ring homomorphism, it is not a Green functor. Therefore we describe only the $\ha^Q_\star$-module structure of $\hzt^Q_\star$.
\begin{lemma}
\[
\begin{array}{c}
\ho{\hzt}{\star}\cong\ho{\hz}{\star+1-\sigma} \\
\hzt^{hQ}_\star\cong\hz^{hQ}_{\star+1-\sigma}.
\end{array}
\]
\end{lemma}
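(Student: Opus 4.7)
My plan is to read off both sides from Proposition \ref{prop coefficients of hfp and ho}, observing that the shift by $1-\sigma$ swaps the parity of the twisted degree and therefore swaps the roles of $V$ and $\tilde{V}$ in the coefficient system $H^y(S^{y\sigma},V)$. For $\zt$ the underlying $\mathbb{Z}[Q]$-module is $V=\tilde{\mathbb{Z}}$, so $\tilde{V}=\tilde{\mathbb{Z}}\otimes\tilde{\mathbb{Z}}\cong\mathbb{Z}$. Thus by the universal coefficient computation recorded after the proof of Proposition \ref{prop coefficients of hfp and ho}, the coefficient $H^y(S^{y\sigma},\tilde{\mathbb{Z}})$ is $\tilde{\mathbb{Z}}$ for $y$ even and $\mathbb{Z}$ for $y$ odd; for $\underline{\mathbb{Z}}$ the assignment is opposite.

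Concretely, first I would reindex Proposition \ref{prop coefficients of hfp and ho} by writing $(y\sigma-y)-p=x+y\sigma$, which gives $p=-x-y$. Then
\[
\hzt^{hQ}_{x+y\sigma}=H^{-x-y}\!\left(Q;\,H^y(S^{y\sigma},\tilde{\mathbb{Z}})\right)
\quad\text{and}\quad
\hz^{hQ}_{(x+1)+(y-1)\sigma}=H^{-x-y}\!\left(Q;\,H^{y-1}(S^{(y-1)\sigma},\mathbb{Z})\right).
\]
Next I would note that the second arguments of these cohomology groups agree: if $y$ is even then both equal $\tilde{\mathbb{Z}}$, and if $y$ is odd then both equal $\mathbb{Z}$. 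This identifies the coefficient groups $\mathbb{Z}[Q]$-module by $\mathbb{Z}[Q]$-module, so the two expressions are naturally isomorphic and the first line of the lemma follows.

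The second line is proved identically, using the homology formula $\ho{\hm}{(y\sigma-y)+p}=H_p(Q;H^y(S^{y\sigma},V))$ from the same proposition in place of its cohomological analogue; the parity swap argument is unchanged. The step that is slightly subtle is matching indices, but there is no genuine obstacle: once the formulas from Section \ref{section hfp, ho and tate} are applied, the proof is simply the bookkeeping observation that shifting $y\mapsto y-1$ interchanges $\mathbb{Z}$ and $\tilde{\mathbb{Z}}$ coefficients in $H^y(S^{y\sigma},-)$.
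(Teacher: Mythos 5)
Your proof is correct and is essentially the same argument as the paper's, which simply cites the fact that $\tilde{\tilde{\mathbb{Z}}}\cong\mathbb{Z}$ as $\mathbb{Z}[Q]$-modules together with Proposition \ref{prop coefficients of hfp and ho}; you have merely expanded the index bookkeeping and the parity swap of the coefficient system in full detail.
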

\begin{proof}
Follows from the fact that $\tilde{\tilde{\mathbb{Z}}}\cong\mathbb{Z}$ as $\mathbb{Z}[Q]$-modules and Proposition \ref{prop coefficients of hfp and ho}.
\end{proof}

The $\ha^Q_\star$-module structure of $\hzt^Q_\star$ is depicted in Figure \ref{figure coeffs of hzt}. Multiplication by $u$ is represented by blue dashed lines, whereas multiplication by $a$ is given by red dashed lines. Note that since the $Q$-action on the $Q/e$-level of $\underline{\tilde{\mathbb{Z}}}$ is non-trivial, entries on $1-\sigma$ and $\sigma-1$ spots are different from the rest of the $x=1$ and $x=-1$ columns, respectively.
\begin{figure}[ht]
\begin{tikzpicture}[scale=0.5]
%Kratka
\draw[help lines, thin, lgray] (0,0) grid (20,20);
\draw[->, thick] (0,10)--(20,10) ;
\draw[->, thick] (10,0)--(10,20) ;

%Czerwone linie po lewej
\draw[red, dashed] (2,20)--(2,18);
\draw[red, dashed] (4,20)--(4,16);
\draw[red, dashed] (6,20)--(6,14);
\draw[red, dashed] (8,20)--(8,12);

%CZerwone linie po prawej
\draw[red, dashed] (11,0)--(11,9);
\draw[red, dashed] (13,0)--(13,7);
\draw[red, dashed] (15,0)--(15,5);
\draw[red, dashed] (17,0)--(17,3);
\draw[red, dashed] (19,0)--(19,1);
\draw[red, dashed] (1,20)--(1,19);

%Niebieskie linie
\draw[blue, dashed] (0,20)--(20,0);
\foreach \x in {1,...,7}
\draw[blue, dashed] (\x,20)--(8,12+\x);

\foreach \x in {0,...,8}
\draw[blue, dashed] (11,\x)--(11+\x,0);

%Grube kropki
\foreach \x in {0,...,9}
\pgfmathsetmacro\r{\x*2+1}
\node[circle,fill,scale=0.5] at (\r,20-\r) {};

%Pustekropki w górę
\foreach \x in {0,...,2}
\node[circle, draw, fill=white, scale=0.3] at (2,20-\x) {};
\foreach \x in {0,...,4}
\node[circle, draw, fill=white, scale=0.3] at (4,20-\x) {};
\foreach \x in {0,...,6}
\node[circle, draw, fill=white, scale=0.3] at (6,20-\x) {};
\foreach \x in {0,...,8}
\node[circle, draw, fill=white, scale=0.3] at (8,20-\x) {};
\foreach \x in {0,...,8}
\node[circle, draw, fill=white, scale=0.3] at (8,20-\x) {};
\node[circle, draw, fill=white, scale=0.3] at (1,20) {};
\node[circle, draw, fill=white, scale=0.3] at (19,0) {};

%Pustekropki w dół
\foreach \x in {0,...,2}
\node[circle, draw, fill=white, scale=0.3] at (17,\x) {};
\foreach \x in {0,...,4}
\node[circle, draw, fill=white, scale=0.3] at (15,\x) {};
\foreach \x in {0,...,6}
\node[circle, draw, fill=white, scale=0.3] at (13,\x) {};
\foreach \x in {0,...,8}
\node[circle, draw, fill=white, scale=0.3] at (11,\x) {};

\node[above left] at (20,10) {$x\cdot 1$};
\node[below left] at (10,20) {$y\cdot \sigma$};

\node[scale=1.5] at (10,21) {$H\underline{\tilde{\mathbb{Z}}}^Q_{x+y\sigma}$};

\node at (0.5,-1) {Key:};
\node[circle, fill,scale=0.5] at (3,-1) {};
\node at (3.5,-1) {$\mathbb{Z}$};
\node[circle, draw, fill=white, scale=0.3] at (6,-1) {};
\node at (6.8,-1) {$\mathbb{Z}/2$};
\end{tikzpicture}
\caption{Coefficients of $\hzt$.}
\label{figure coeffs of hzt}
\end{figure}

\bibliographystyle{amsplain}
\end{document}